\definecolor{labelkey}{rgb}{0.6,0,1}
\newcommand{\figsFolfer}{./figs/}
\theoremstyle{theorem}
\newtheorem{theorem}{Theorem}
\newtheorem{lemma}[theorem]{Lemma}
\newtheorem{proposition}[theorem]{Proposition}
\theoremstyle{remark}
\newtheorem{remark}[theorem]{Remark}
\theoremstyle{definition}
\newtheorem{assumption}[theorem]{Assumption}
\def\genexp{*}
\newcommand{\R}{\mathbb{R}}
\newcommand{\norm}[2][]{\|#2\|_{#1}}
\newcommand{\seminorm}[2][]{|#2|_{#1}}
\def\mesh{\mathfrak T}
\def\F{\mathfrak S}
\def\E{\mathfrak E}
\def\Fi{\F_{\rm int}}
\def\FG{\F_{\Gamma}}
\def\FD{\F_{\rm Dir}}
\def\EG{\mathfrak E_{\Gamma}}
\def\EGi{\mathfrak E_{\Gamma\!,{\rm int}}}
\def\DIVG{\nabla_{\!\Gamma}\!\boldsymbol{\cdot}\!}
\newcommand{\brak}[2]{\left[ #1 \right]_{#2}}
\def\Fl{\mathcal{F}}
\def\FlG{\Fl^\Gamma}
\newcommand{\flux}[2][\Omega]{\mathcal F^{#1}_{#2}}
\def\Card#1{{\rm Card}(#1)}
\def\term{\mathcal T}
\newcommand{\Ccons}[1]{C_{{\rm cons}}^{#1}}
\begin{document}
	
	\title{Design and analysis of finite volume methods for elliptic equations with oblique derivatives; application to Earth gravity field modelling}
	
	\author{J\'er\^ome Droniou\thanks{School of Mathematics, Monash University, Melbourne (Australia), \texttt{jerome.droniou@monash.edu}}
		\and
		Matej Medla \thanks{Department of Mathematics, Faculty of Civil Engineering, Slovak University of Technology,
			Radlinskeho 11, 810 05 Bratislava, Slovak Republic, \texttt{medla@math.sk}}
		\and
		Karol Mikula  \thanks{Department of Mathematics, Faculty of Civil Engineering, Slovak University of Technology,
			Radlinskeho 11, 810 05 Bratislava, Slovak Republic;
			Algoritmy:SK s.r.o., Sulekova 6, 81106 Bratislava, Slovak Republic, \texttt{mikula@math.sk}}
	}
	
	\maketitle

\begin{abstract}
We develop and analyse finite volume methods for the Poisson problem with boundary conditions involving oblique derivatives. We design a generic framework, for finite volume discretisations of such models, in which internal fluxes are not assumed to have a specific form, but only to satisfy some (usual) coercivity and consistency properties. The oblique boundary conditions are split into a normal component, which directly appears in the flux balance on control volumes touching the domain boundary, and a tangential component which is managed as an advection term on the boundary. This advection term is discretised using a finite volume method based on a centred discretisation (to ensure optimal rates of convergence) and stabilised using a vanishing boundary viscosity. A convergence analysis, based on the 3rd Strang Lemma \cite{DPD18}, is conducted in this generic finite volume framework, and yields the expected $\mathcal O(h)$ optimal convergence rate in discrete energy norm.

We then describe a specific choice of numerical fluxes, based on a generalised hexahedral meshing of the computational domain. These fluxes are a corrected version of fluxes originally introduced in \cite{Medla.et.al2018}. We identify mesh regularity parameters that ensure that these fluxes satisfy the required coercivity and consistency properties. The theoretical rates of convergence are illustrated by an extensive set of 3D numerical tests, including some conducted with two variants of the proposed scheme. A test involving real-world data measuring the disturbing potential in Earth gravity modelling over Slovakia is also presented.

\end{abstract}
	
	\tableofcontents{}
	
	\section{Introduction}
	
	We consider in this work a Laplace equation with oblique boundary conditions:
	\begin{subequations}\label{eq:model}
		\begin{align}
			-\Delta \overline{T}(\vec{x})  =0,&\quad \vec{x}\in\Omega\label{eq:orig_PDE}\\
			\nabla \overline{T}(\vec{x})\cdot\vec{V}(\vec{x})  =g(\vec{x}),&\quad \vec{x}\in\Gamma\label{eq:obliq_BC}\\
			\overline{T}(\vec{x})  =0,&\quad \vec{x}\in\partial\Omega\backslash\Gamma,\label{eq:dir_BC}
		\end{align}
	\end{subequations}
	where $\Omega$ is a bounded domain in $\R^3$ with piecewise $C^2$ boundary, $\Gamma$ is a relatively open subset of $\partial\Omega$ that is fully contained in a  smooth component of this boundary, $g\in L^2(\partial\Omega)$ and $\vec{V}$ is a $C^1$ vector field such that $\vec{V}(\vec{x})\cdot\vec{n}(\vec{x})\not=0$ for all $\vec{x}\in \Gamma$. Here, $\vec{n}$ denotes the outer normal to $\partial\Omega$. We also assume that the $(d-1)$-dimensional measure of $\partial\Omega\backslash \Gamma$ is non-zero.
	On $\Gamma$, $\vec{V}$ can be decomposed into a normal and a tangential component to $\Gamma$. After renormalising $g$ we can assume that the normal component is $\vec{n}$, and thus that
	\begin{equation}\label{eq:defV}
		\vec{V}(\vec{x})=\vec{n}(\vec{x})+\vec{W}(\vec{x}),\quad\forall \vec{x}\in\Gamma.
	\end{equation}
	The properties of $\Gamma$ ensure that $\vec{W}$ is a $C^1$ tangential vector field on $\Gamma$. 
	
	A motivation to study boundary value problem (BVP) \eqref{eq:model} comes from Earth gravity field modelling. The Earth gravity potential $G$ fulfils outside the Earth a non-homogeneous elliptic equation
	\begin{equation}
		\Delta G(\vec{x})=2\omega^2,
		\label{eq:non-homogeneous_elliptic_eq}
	\end{equation}
	where $\omega$ is the spin velocity of the Earth \cite{Moritz}. The magnitude of the total gravity vector $\nabla G$ is  called gravity. If the measured gravity is  prescribed on the Earth surface, i.e.
	\begin{equation}\label{eq:grav_pot_grad_norm}
		|\nabla G(\vec{x})|=\overline{g}(\vec{x}),
	\end{equation}
	then Eq. \eqref{eq:non-homogeneous_elliptic_eq} with BC \eqref{eq:grav_pot_grad_norm} represents the so-called nonlinear geodetic BVP for the actual gravity potential~$G$. The existence, uniqueness and other properties to the solution of this problem, and its variants, were studied extensively in physical geodesy community, see e.g. \cite{Backus1968, Grafarend1971, Koch1972,  Bjerhammar1983, Grafarend1989, Sacerdote1989, Heck1989, Diaz2006,Diaz2011}.
	
	
	In Earth gravity field modelling,  the actual gravity field $G$  is usually expressed as a sum of the selected model field $U$ and the remainder $\overline{T}$, i.e.
	\begin{equation}
		G(\vec{x})=U(\vec{x})+\overline{T}(\vec{x}).
	\end{equation}
	If the model field $U$ is generated by an ellipsoid with the same mass as the Earth, rotating with the same spin velocity $\omega$ and with the constant surface potential equal to the geopotential $W_0$ (see \cite{SanchesW0} for a definition of $W_0$, the potential $G$ on the mean sea surface level), then $U$ is called the normal gravity potential and $\overline{T}$ is called the disturbing potential. This potential $\overline{T}$ has no centrifugal component and it is generally accepted that the disturbing potential satisfies the Laplace equation $\Delta \overline{T}=0$  outside the Earth, see e.g. \cite{Moritz, Holota1997}.
	In the satellite era, people have been able to consider a bounded domain $\Omega$ outside the Earth where an upper part of the boundary is given as a sphere at altitude of a chosen satellite mission, and the bottom part $\Gamma \subset \partial\Omega$ is given by a subset of the Earth surface \cite{Faskova2010, Medla.et.al2018}. On this bottom part  $\Gamma$ the nonlinear BC \eqref{eq:grav_pot_grad_norm} is given and, on the upper part, as well as on the side boundaries if one focuses on a tesseroid above the Earth, the Dirichlet-type BC obtained from satellite gravity missions can be prescribed. This allows us to fix a solution to the satellite data $\overline{T}_{\mbox{\small\textsc{sat}}}$. See Figure \ref{fig:region_sat} for an illustration. 

	\begin{figure}
		\centering
		\input{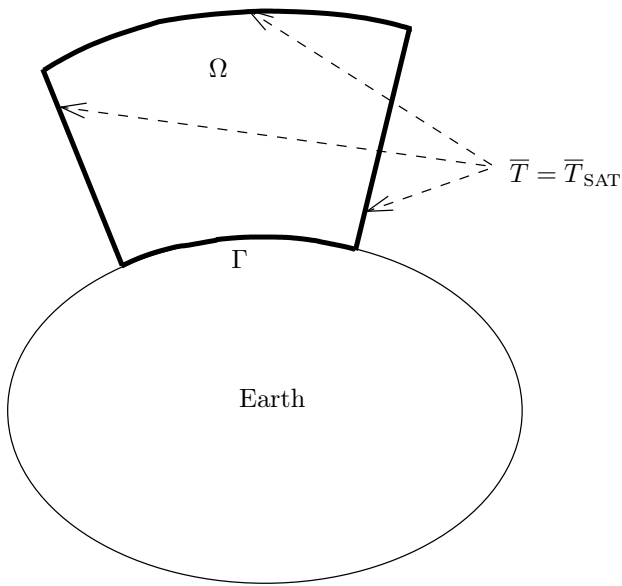}
		\caption{Illustration of the type of domains and boundary conditions that can be considered, using satellite data, in Earth gravity field simulation.}
		\label{fig:region_sat}
	\end{figure}

Such nonlinear satellite-fixed geodetic BVP \cite{Macak2016} for the disturbing potential $\overline{T}$ can be formulated as follows
	\begin{align}
		\Delta \overline{T}(\vec{x})=0, &\quad\vec{x}\in \Omega,\label{eq:non-linear_potential_laplace}\\
		|\nabla (\overline{T}+U)(\vec{x})| = \overline{g}(\vec{x}),& \quad \vec{x}\in  \Gamma,\label{eq:non-linear_potential_oblique_BC}  \\
		\overline{T}(\vec{x}) = \overline{T}_{\mbox{\small\textsc{sat}}}(\vec{x}),& \quad \vec{x}\in \partial\Omega\backslash\Gamma.\label{eq:non-linear_potential_dirichlet_BC}
	\end{align}
	Using the relation $|\vec{\xi}|=\frac{\vec{\xi}}{|\vec{\xi}|}\cdot\vec{\xi}$ for $\vec{\xi}\in\R^3\backslash\{0\}$ we get \eqref{eq:non-linear_potential_oblique_BC} in the form
	\begin{equation*}
		\frac{\nabla (\overline{T}+U)(\vec{x})}{|\nabla (\overline{T}+U)(\vec{x})|}\cdot \nabla (\overline{T}+U)(\vec{x}) = \overline{g}(\vec{x}).
	\end{equation*}
	Letting
	\begin{equation}\label{eq:grav_pot_grad}
		\vec{v}(\vec{x})=\frac{\nabla (\overline{T}+U)(\vec{x})}{|\nabla (\overline{T}+U)(\vec{x})|}
	\end{equation}
	be the actual gravity vector $\nabla G(\vec{x})$ unit direction, we can rewrite the nonlinear boundary condition \eqref{eq:non-linear_potential_oblique_BC} as follows
	\begin{equation}\label{eq:non-linear_potential_oblique_BC_rewrite}
		\vec{v}(\vec{x})\cdot \nabla \overline{T}(\vec{x}) = \overline{g}(\vec{x})-\vec{v}(\vec{x})\cdot \nabla U(\vec{x}), \quad \vec{x}\in  \Gamma. 
	\end{equation}
	Since the unit vector $\vec{v}(\vec{x})$ is unknown and depends on $\overline{T}(\vec{x})$, boundary condition \eqref{eq:non-linear_potential_oblique_BC_rewrite} is still nonlinear. 
	However, if we set $\overline{T}(\vec{x})=0$ in \eqref{eq:grav_pot_grad}, which means that we approximate the unit direction $\vec{v}(\vec{x})$ of the actual gravity vector  by the unit direction of the normal gravity vector equal to $\frac{\nabla U(\vec{x})}{|\nabla U(\vec{x})|}$, we get a linear(ized) boundary condition 
	\begin{equation}\label{eq:oblique_geo_BC}
		\vec{V}(\vec{x})\cdot \nabla \overline{T}(\vec{x}) = \overline{g}(\vec{x})-\overline{\gamma}(\vec{x}), 
	\end{equation}
	where $\overline{\gamma}(\vec{x})=\vec{V}(\vec{x})\cdot\nabla U(\vec{x})= \frac{\nabla U(\vec{x})}{|\nabla U(\vec{x})|}\cdot\nabla U(\vec{x})=|\nabla U(\vec{x})|$ is the so-called normal gravity. Since all quantities depending on $U$ are given analytically, the equation  \eqref{eq:oblique_geo_BC} represents a linear oblique derivative boundary condition. Together with equations \eqref{eq:orig_PDE} and \eqref{eq:dir_BC}, they are called the fixed gravimetric boundary value problem in the geodetic community \cite{Backus1968,Koch1972,Holota1997,Holota2005,Cunderlik2008,Faskova2010,Medla.et.al2018} and give a basis for determining the Earth gravity field when gravity measurements are known on the Earth surface. When we denote $\overline{g}-\overline{\gamma}=g$ and consider the problem on a bounded domain outside the Earth, we end up with the oblique derivative BVP \eqref{eq:model} (here, $\vec{V}$ has unit length and, as previously mentioned, the decomposition \eqref{eq:defV} is obtained after rescaling $g$).
	
	Let us briefly mention some results that can be found in the literature regarding the numerical approximation of second order equations with oblique derivative boundary conditions.
	In \cite{Bradji2006,Bradji2018} authors deal with the finite volume method for the oblique derivative boundary value problem in 2D case. In \cite{Bradji2006} they consider the oblique BC in the form
	\begin{equation}
		\overline{T}_n(\vec{x})+(\alpha \overline{T})_t(\vec{x})=g(\vec{x}), 
	\end{equation}
	where $\alpha$ is a smooth function, $\overline{T}_n$ is a derivative in the normal direction and $(\alpha \overline{T})_t$ is a derivative in tangential direction. They develop a finite volume scheme based on the upwind principle, prove its convergence and obtain an error estimate of order $\sqrt{h}$.  In \cite{Bradji2018}, convergence results are established for the Poisson and a parabolic equation with oblique derivative boundary condition in which $\alpha$ is constant. The convergence results are not only obtained for the approximate finite volume solutions, but also for their discrete gradients. The error estimate of order $\sqrt{h}$ is obtained theoretically, but numerical experiments presented in these works indicate a first order rate of convergence. In \cite{Barrett1985} the authors present and analyse a 2D finite element method for the oblique derivative boundary problem, where the oblique derivative boundary is given by a graph of a real function. Finite volume methods for solving oblique derivative problems in 3D domains were suggested and numerically investigated in \cite{Macak2012,Macak2014,Macak2015, Medla.et.al2018}. These schemes are based either on upwind or central approximation of the oblique derivative. A numerical approximation of a nonlinear problem with eikonal-type boundary condition (\ref{eq:grav_pot_grad_norm}) was presented in \cite{Macak2016}.  This approximation is based on an iterative update of the oblique derivative condition. 
	
	In this paper we introduce and analyse novel numerical scheme for solving 3D oblique derivative boundary value problems for the Laplace equation. For the first time, there is presented a convergence analysis and error estimates for a finite volume scheme solving the oblique derivative problem in 3D. The model comes from the Earth gravity field modelling on real Earth topography but can be used in other applications as well. The presented numerical approach is general and covers various possible discretisations of the Laplace equation inside the domain and it treats in a robust and stable way the oblique derivative boundary condition. We present numerical tests showing convergence properties of the novel scheme and compare them to further alternative numerical treatments of the oblique derivative. We also present a local Earth gravity field modelling for the region of Slovakia where we compare obtained numerical results with GPS-leveling measurements.
	
	The paper is organised as follows. In Section  \ref{sec:scheme} we present a  generic finite volume method  for solving the oblique derivative problem for the Laplace equation and its error estimates. In Section \ref{sec:tests} we specify approximation of inner and surface fluxes and present results of numerical computations. We also give alternative schemes for oblique derivative treatment and discuss their pros and cons. In Section \ref{sec:conclusion} we present concluding remarks. Appendix \ref{appen:proof} contains proof of error estimate for the generic scheme and Appendix \ref{appen:prop.method} contains proof of coercivity and consistency  of the suggested  inner flux approximation.
	
	\section{Generic finite volume scheme} \label{sec:scheme}
	
	We describe here a generic finite volume approximation of \eqref{eq:model}. The discretisation is based on a recasting of the model to transform the oblique derivative into a normal derivative, handled as a Neumann boundary condition, and a boundary advection--reaction term along $\Gamma$. The method is ``generic'' in the sense that we do not impose any specific expression of the numerical fluxes, only broad assumptions that enable the convergence analysis of the method. Our approach and analysis therefore cover many possible choices of Finite Volume methods for discretising the Laplacian in the domain.
	
	\subsection{Mesh, space of unknowns and interpolant}
	
	Let $\mesh$ be a partition of $\Omega$ into ``generalised'' polyhedral finite volumes $p$, the generalisation coming from the fact that the faces of the polyhedra could be curved (especially those lying on $\Gamma$). The mesh size is $h:=\max_{p\in\mesh}{\rm diam}(p)$. We denote by $\F$ the set of faces of the mesh, and by $\Fi$ the faces contained in $\Omega$. The boundary faces are assumed to be compatible with $\Gamma$ in the sense that each face in $\F\backslash\Fi$ totally lies on $\Gamma$, or totally lies on the Dirichlet boundary $\partial\Omega\backslash\Gamma$. We let $\FG$ be the set of faces on $\Gamma$, and $\FD=\F\backslash(\Fi\cup \FG)$ be the set of faces on $\partial\Omega\backslash\Gamma$.
	
	For each cell $p\in\mesh$ we take a point $\vec{x}_p\in p$ and we denote by $\F(p)$ the set of faces of $p$, so that $\partial p=\cup_{\sigma \in \F(p)}\sigma$.
	If $\sigma\in \F(p)$, $\vec{n}_{p,\sigma}$ is the unit outer normal to $p$ on $\sigma$.
	Every face $\sigma$ in $\FG$ is a face of a unique finite volume $p$; the dependency of $p$ on $\sigma$ is not made explicit as there is no risk of confusion in the formulas. We assume that:
	\begin{equation}\label{assum:mesh}
		\begin{aligned}
			\mbox{Each control volume $p\in\mesh$ has at most one face $\sigma$ in $\FG$ and, in that case, $\vec{x}_p\in\sigma$}.
		\end{aligned}
	\end{equation}
	\begin{remark}[Assumption \eqref{assum:mesh}]
		This assumption is not mandatory, and the design and analysis in the following sections could be adapted to meshes not satisfying \eqref{assum:mesh} (see Remark \ref{rem:assum:mesh}); however, the method we consider in Section \ref{sec:tests} naturally satisfies this property, which is why we assume it in our analysis.
	\end{remark}
	
	For every $p\in\mesh$ and $\sigma\in \F(p)\cap \Fi$, we denote by $q_{p}(\sigma)$ the finite volume such that $\sigma=\overline{p}\cap \overline{q_p(\sigma)}$; here too, no risk of confusion arising we simply denote $q$ for $q_p(\sigma)$. We then set $d_{pq}=|\vec{x}_{p}-\vec{x}_{q}|$. A face $\sigma\in\F(p)\cap\FD$ on the Dirichlet boundary of a cell $p$ is sometimes considered as a ``degenerate'' cell, and also denoted by $q$; for such faces, we pick a point $\vec{x}_q\in\sigma$ and define again $d_{pq}=|\vec{x}_{p}-\vec{x}_{q}|$.
	
	For each $\sigma\in \FG$ we take $\vec{x}_{\sigma}\in\sigma$, and we denote by $\E(\sigma)$ the set of edges $e$ of $\sigma$. The set of all such edges is $\EG=\cup_{\sigma\in\FG}\E(\sigma)$, and the edges that lie in the relative interior of $\Gamma$ are gathered in the set $\EGi$. For $e\in \E(\sigma)$, $\vec{n}_{\sigma,e}$ is the unit normal outward to $\sigma$ on $e$ in the tangent space of $\Gamma$.
	
	If $X$ is a control volume $p$, a face $\sigma$ or an edge $e$, $|X|$ denotes the Lebesgue measure of $X$ in the corresponding dimension of $X$ (dimension 3 for a control volume, 2 for a face, 1 for an edge). 
	
	\medskip
	
	Our space of approximation has unknowns in the finite volumes, on the Dirichlet faces (``degenerate cells''), and on each edge on $\Gamma$, with zero values for Dirichlet faces, and for edges that are not in the relative interior of $\Gamma$:
	\[
	V_{h}:=\{\varphi=((\varphi_{p})_{p\in \mesh},(\varphi_{\sigma})_{\sigma\in\FD},(\varphi_{e})_{e\in \EG})\,:\,\varphi_p\in\R\,,\;\varphi_\sigma=0\,,\;\varphi_e\in\R\,,\;\varphi_e=0\mbox{ if $e\not\in\EGi$}\}.
	\]
	\begin{remark} Introducing the zero-valued unknowns is of course not necessary, but will be useful to simplify some expressions.
	\end{remark}
	The norm on $V_h$ is defined by
	\begin{subequations}\label{eq:norm.Vh}
		\begin{equation}\label{eq:norm.Vh.1}
			\norm[V_h]{\varphi} := \left(\seminorm[V_h,\Omega]{\varphi}^2+h_\Gamma\seminorm[V_h,\Gamma]{\varphi}^2\right)^{1/2},
		\end{equation}
		where $h_\Gamma:=\max_{\sigma\in\FG}{\rm diam}(\sigma)$,
		\begin{equation}\label{eq:norm.Vh.Omega}
			\seminorm[V_h,\Omega]{\varphi}:=\left(\sum_{\sigma\in \F\backslash \FG}\frac{|\sigma|}{d_{pq}}(\varphi_{p}-\varphi_{q})^{2}\right)^{1/2},
		\end{equation}
		and
		\begin{equation}\label{eq:norm.Vh.Gamma}
			\seminorm[V_h,\Gamma]{\varphi}:=\left(\sum_{\sigma\in \FG}\sum_{e\in \E(\sigma)}\frac{|e|}{d_{pe}^\bot}(\varphi_{p}-\varphi_{e})^{2}\right)^{1/2}
		\end{equation}
	\end{subequations}
	where $d_{pe}^\bot$ is the orthogonal distance between $\vec{x}_p$ (which belongs to $\sigma$) and $e$.
	Remember that, in \eqref{eq:norm.Vh.Omega}, $p$ and $q$ are the two cells on each side of $\sigma$ if $\sigma\in\Fi$, and $q=\sigma$ if $\sigma\in\FD$ (so that $\varphi_q=\varphi_\sigma=0$ in that case).
	The term $\seminorm[V_h,\Omega]{\varphi}$ can thus be viewed as a discrete $H^1_0$-(semi)norm in $\Omega$ \cite[Eq. (7.7f)]{gdm}, whilst $\seminorm[V_h,\Gamma]{\varphi}$ plays the role of a discrete $H^1_0$-(semi)norm on the surface $\Gamma$. The presence of this boundary semi-norm, and its scaling by $h_\Gamma$, will be justified by the introduction of a small amount of diffusion on that surface to stabilise a centred approximation of an advective term on $\Gamma$ stemming from the oblique boundary condition (see \eqref{eq:scheme:balance}). Notice that, in \eqref{eq:norm.Vh.Gamma}, Assumption \eqref{assum:mesh} was used to identify the unknown on a face $\sigma\in\FG$ with the value $\varphi_p$ corresponding to $p\in\mesh$ such that $\sigma\in\F(p)$.

	The unknowns in the control volumes $p$ are destined to be approximations of the solution at $\vec{x}_p$, whereas those on the boundary edges approximate the average value of the solution on the corresponding edge. This leads to defining the following interpolant $I_h:C(\overline{\Omega})\to V_h$: for $\varphi\in C(\overline{\Omega})$ such that $\varphi=0$ on $\partial\Omega\backslash\Gamma$,
	\begin{equation}
		\begin{aligned}
			&I_{h}\varphi=((\varphi_{p})_{p\in \mesh},(\varphi_{\sigma})_{\sigma\in\FD},(\varphi_{e})_{e\in \EG})\mbox{ with }\\
			&\varphi_{p}=\varphi(\vec{x}_{p})\quad\forall p\in \mesh,\qquad\varphi_\sigma=\varphi(\vec{x}_q)\quad\forall q=\sigma\in\FD\,,\qquad \varphi_e=\frac{1}{|e|}\int_{e}\varphi\quad\forall e\in \EG.
		\end{aligned}
		\label{eq:interpolant}
	\end{equation}
	The boundary condition $\varphi=0$ on $\partial\Omega\backslash\Gamma$ ensures that $\varphi_\sigma=0$ for all $\sigma\in\FD$, and that $\varphi_e=0$ whenever $e\not\in\EGi$.
	
	\subsection{Prolegomena to the scheme}
	
	Integrating \eqref{eq:obliq_BC} over a control volume $p\in\mesh$, using Green's theorem and introducing $\vec{W}$ defined in \eqref{eq:defV}, it holds
	\begin{align*}
		0=\iiint_{p}-\Delta\overline{T}
		={}&-\iint_{\partial p}\nabla\overline{T}\cdot\vec{n}_{p}\\
		={}&-\sum_{\sigma\in \F(p)\backslash \FG}\iint_{\sigma}\nabla\overline{T}\cdot\vec{n}_{p,\sigma}-\sum_{\sigma\in \F(p)\cap\FG}\iint_{\sigma}\nabla\overline{T}\cdot(\vec{n}_{p,\sigma}+\vec{W}-\vec{W}).
	\end{align*}
	Denoting by $\overline{F}_{p,\sigma}(\overline{T})=-\iint_{\sigma}\nabla\overline{T}\cdot\vec{n}_{p,\sigma}\mathrm{d}\vec{x}$
	the exact fluxes, we invoke the boundary condition \eqref{eq:obliq_BC} to write
	\[
	0=\sum_{\sigma\in \F(p)\backslash \FG}\overline{F}_{p,\sigma}(\overline{T})-\sum_{\sigma\in \F(p)\cap\FG}\iint_{\sigma}g-\nabla\overline{T}\cdot\vec{W}.
	\]
	The vector field $\vec{W}$ is tangential to $\Gamma$ and thus only the tangential gradient of $\overline{T}$ is involved in the quantity $\nabla\overline{T}\cdot\vec{W}$. We can therefore write $\nabla\overline{T}\cdot\vec{W}=\DIVG(\overline{T}\vec{W})-\overline{T}\DIVG\vec{W}$, where $\DIVG$ is the divergence operator on the manifold $\Gamma$. This leads, using the divergence theorem on each face $\sigma\in \F(p)\cap\FG$, to
	\begin{align*}
		\sum_{\sigma\in \F(p)\cap\FG}\iint_{\sigma}g={}&
		\sum_{\sigma\in \F(p)\backslash\FG}\overline{F}_{p,\sigma}(\overline{T})
		+\sum_{\sigma\in \F(p)\cap\FG}\iint_{\sigma}\DIVG\left(\overline{T}\vec{W}\right)-\overline{T}\DIVG\vec{W}\\
		{}=&\sum_{\sigma\in \F(p)\backslash\FG}\overline{F}_{p,\sigma}(\overline{T})
		+\sum_{\sigma\in \F(p)\cap\FG}\sum_{e\in \E(\sigma)}\int_{e}\overline{T}\vec{W}\cdot\vec{n}_{\sigma,e}-\sum_{\sigma\in \F(p)\cap\FG}\iint_{\sigma}\overline{T}\DIVG\vec{W}.
	\end{align*}
	Let us denote by $\brak{\,\overline{T}\vec{W}\cdot\vec{n}}{\sigma,e}=\int_{e}\overline{T}\vec{W}\cdot\vec{n}_{\sigma,e}$
	the exact advection fluxes on the boundary, and by $\brak{\,\overline{T}\DIVG\vec{W}}{\sigma}=\iint_{\sigma}\overline{T}\DIVG\vec{W}\mathrm{d}\vec{x}$ the other contribution (akin to a reaction term) to the boundary term. 
	This shows that the solution to \eqref{eq:model} satisfies, for all $p\in \mesh$,
	\begin{equation}
		\sum_{\sigma\in \F(p)\backslash\FG}\overline{F}_{p,\sigma}(\overline{T})
		+\sum_{\sigma\in \F(p)\cap\FG}\sum_{e\in \E(\sigma)}\brak{\,\overline{T}\vec{W}\cdot\vec{n}}{\sigma,e}
		-\sum_{\sigma\in \F(p)\cap\FG}\brak{\,\overline{T}\DIVG\vec{W}}{\sigma} =\sum_{\sigma\in \F(p)\cap\FG}\iint_{\sigma}g.\label{eq:exact_scheme}
	\end{equation}

	\subsection{Scheme}
	
	The scheme for \eqref{eq:model} is obtained discretising \eqref{eq:exact_scheme}. As previously mentioned, we will assume generic properties on the diffusive numerical fluxes. The advective contribution to the boundary terms is discretised using a centred scheme, to which we add a small amount of (boundary) diffusion for stabilisation purposes. As discussed in Remark \ref{rem:centred}, the choice of a centred discretisation seems crucial to prove optimal error estimates.
	
	Based on our choice of unknowns and interpolant \eqref{eq:interpolant}, we make the following approximation, in which $T=((T_p)_{p\in\mesh},(T_\sigma)_{\sigma\in\FD},(T_e)_{e\in\EG})$ is the sought approximation of $\overline{T}$:
	\begin{equation}\label{eq:approx.brak}
		\brak{\,\overline{T}\vec{W}\cdot\vec{n}}{\sigma,e}\approx T_{e}\brak{\vec{W}\cdot\vec{n}}{\sigma,e}\quad\mbox{ and }
		\quad \brak{\,\overline{T}\DIVG\vec{W}}{\sigma}\approx T_{p}\brak{\DIVG\vec{W}}{\sigma},
	\end{equation}
	where $\brak{\vec{W}\cdot\vec{n}}{\sigma,e}=\int_{e}\vec{W}\cdot\vec{n}_{\sigma,e}$ and $\brak{\DIVG\vec{W}}{\sigma}=\int_\sigma\DIVG\vec{W}$. Here, we used Assumption \eqref{assum:mesh} to utilise $T_p$ as approximate value of $\overline{T}$ on $\sigma\in\F(p)\cap\FG$.
	The exact fluxes $\overline{F}_{p,\sigma}(\overline{T})$, for $p\in\mesh$ and $\sigma\in\F(p)\backslash\FG$, are discretised into numerical fluxes $\flux{p,\sigma}(T)$ that satisfy the following conservativity condition: for all $\varphi\in V_h$ and all $\sigma\in\Fi$,
	\begin{equation}\label{eq:cons.inner}
		\flux{p,\sigma}(\varphi)+\flux{q,\sigma}(\varphi)=0.
	\end{equation}
	We also select numerical diffusion fluxes $\flux[\Gamma]{\sigma,e}(T)$ on the boundary, approximations of $-\int_e \nabla_\Gamma \overline{T}\cdot\vec{n}_{\sigma,e}$ for $\sigma\in\FG$ and $e\in\E(\sigma)$.
	
	\medskip
	
	The resulting finite volume scheme has the form: find $T=((T_p)_{p\in\mesh},(T_\sigma)_{\sigma\in\FD},(T_e)_{e\in\EG})\in V_h$ such that:
	\begin{subequations}\label{eq:scheme}
		\begin{equation}
			\begin{aligned}\label{eq:scheme:balance}
				\sum_{\sigma\in \F(p)\backslash\FG}\flux{p,\sigma}(T)+\sum_{\sigma\in \F(p)\cap\FG}{}&\sum_{e\in \E(\sigma)}T_{e}\brak{\vec{W}\cdot\vec{n}}{\sigma,e}
				-\sum_{\sigma\in \F(p)\cap\FG}T_{p}\brak{\DIVG\vec{W}}{\sigma}\\
				&+Rh_\Gamma\sum_{\sigma\in \F(p)\cap\FG}\sum_{e\in \E(\sigma)}\flux[\Gamma]{\sigma,e}(T) =\sum_{\sigma\in \F(p)\cap\FG}\iint_{\sigma}g,\quad \forall p\in\mesh,
			\end{aligned}
		\end{equation}
		where $R\in (0,+\infty)$ will be adjusted later (see Remark \ref{rem:assum:coer}), and
		\begin{equation}\label{eq:scheme:conserv}
			\flux[\Gamma]{\sigma,e}(T)+\flux[\Gamma]{\tau,e}(T)=0,\quad\forall e\in\EGi\mbox{ with $\sigma,\tau\in\FG$ the two faces on each side of $e$}.
		\end{equation}
	\end{subequations}

	\begin{remark}[Conservativity of the fluxes]
		Because they correspond to a cell-centred finite volume method, the inner fluxes $\flux{p,\sigma}$ must satisfy by design the conservativity condition \eqref{eq:cons.inner} on any vector $\varphi\in V_h$. On the contrary, the fluxes $\flux[\Gamma]{\sigma,e}$ correspond to a cell- and edge-centred method and their conservativity is only imposed on the solution to the finite volume scheme (see Equation \eqref{eq:scheme:conserv}). See also \cite[Sections 3.3.1 and 3.3.3]{DPD18} on this topic.
	\end{remark}
	
	\subsection{Error estimate}
	
	The following assumptions are made on the diffusive fluxes.
	
	\begin{assumption}\label{assum:fluxes} The numerical fluxes satisfy:
		\begin{enumerate}
			\item \emph{Coercivity}: there is $\rho_\Omega>0$ and $\rho_\Gamma>0$ such that, for all $\varphi\in V_h$,
			\begin{equation}\label{eq:coer.flux.Omega}
				\sum_{\sigma\in \F\backslash\FG}\flux{p,\sigma}(\varphi)\left(\varphi_{p}-\varphi_{q}\right)\ge \rho_\Omega \seminorm[V_h,\Omega]{\varphi}^{2},
			\end{equation}
			and
			\begin{equation}\label{eq:coer.flux.Gamma}
				\sum_{\sigma\in \FG}\sum_{e\in\E(\sigma)}\flux[\Gamma]{\sigma,e}(\varphi)\left(\varphi_{p}-\varphi_{e}\right)\ge \rho_\Gamma\seminorm[V_h,\Gamma]{\varphi}^{2}.
			\end{equation}
			\item \emph{Consistency}: there exist constants $\Ccons{\Omega}$ and $\Ccons{\Gamma}$ such that, for all $u\in C^2(\overline{\Omega})$ with $u=0$ on $\partial\Omega\backslash\Gamma$,
			\begin{equation}\label{eq:cons.flux.Omega}
				\left|\flux{p,\sigma}(I_h u)+\iint_\sigma \nabla u\cdot\vec{n}_{p,\sigma}\right|\le \Ccons{\Omega} h|\sigma|\norm[C^2(\overline{\Omega})]{u},\quad\forall p\in\mesh\,,\;\forall\sigma\in\F(p)\backslash\FG,
			\end{equation}
			and
			\begin{equation}\label{eq:cons.flux.Gamma}
				\left|\flux[\Gamma]{\sigma,e}(I_h u)+\int_e \nabla u\cdot\vec{n}_{\sigma,e}\right|\le \Ccons{\Gamma} h_\Gamma|e|\norm[C^2(\overline{\Omega})]{u},\quad\forall \sigma\in\FG\,,\;\forall e\in\E(\sigma).
			\end{equation}
		\end{enumerate}
	\end{assumption}
	
	The error estimate will be established under the assumption that the following mesh regularity factor remains bounded  above:
	\begin{equation}\label{def:regh}
		{\rm reg}_\mesh=\max\left\{\frac{{\rm diam}(p)}{d^\bot_{p,\sigma}}\,:\,p\in\mesh\,,\;\sigma\in\F(p)\backslash\FG\right\}+\max\left\{\frac{{\rm diam}(p)}{{\rm diam}(q)}\,:\,p\in\mesh\,,\;\sigma\in\F(p)\cap\Fi\right\},
	\end{equation}
	where $d^\bot_{p,\sigma}$ is the orthogonal distance between $\vec{x}_p$ and $\sigma$.
	
	\begin{remark}[Interpretation of ${\rm reg}_\mesh$]\label{rem:reg.mesh}
		Bounding ${\rm reg}_\mesh$ above imposes that each $\vec{x}_p$ must be ``well within'' its cell $p$, and that two neighbouring cells must have comparable diameters (which does not prevent local refinement, provided that it is done in layers of smoothly refined meshes).
	\end{remark}
	
	Combining \cite[Lemmas B.21 and B.31]{gdm}, we obtain the following discrete trace inequality: there is $C_{\rm tr}>0$ depending only on $\Omega$, $\Gamma$ and an upper bound of ${\rm reg}_\mesh$ such that, for all $\varphi\in V_h$,
	\begin{equation}\label{trace:ineq}
		\sum_{\sigma\in \FG}|\sigma|\varphi_{p}^{2}\le C_{\rm tr}\seminorm[V_h,\Omega]{\varphi}^2.
	\end{equation}
	
	In the rest of the paper, the notation $a\lesssim b$ means that $a\le Cb$ for a constant $C$ that is independent of the quantities in $a$ and $b$, and of the mesh (but that may depend on $\Omega$, $\vec{W}$, $\Gamma$, $\rho_\Omega$, $\rho_\Gamma$, $\Ccons{\Omega}$, $\Ccons{\Gamma}$, $R$ and an upper bound of ${\rm reg}_\mesh$). We can now state our main error estimate.
	
	\begin{theorem}[Error estimate]\label{th:error.estimate}
		Under Assumption \ref{assum:fluxes}, suppose that $\vec{W}$ satisfies
		\begin{subequations}\label{assum:coer}
			\begin{equation}\label{assum:coer.Omega}
				\norm[C(\Gamma)]{(\DIVG\vec{W})^+}< \frac{2\rho_\Omega}{C_{\rm tr}},
			\end{equation}
			where $(\DIVG\vec{W})^+=\max(0,\DIVG\vec{W})$ is the positive part of $\DIVG\vec{W}$, and that $R$ is chosen such that
			\begin{equation}\label{assum:coer.R}
				R\rho_\Gamma > \frac12 \norm[C(\Gamma)^d]{\vec{W}}.
			\end{equation}
		\end{subequations}
		Assume that the solution $\overline{T}$ to \eqref{eq:model} belongs to $C^2(\overline{\Omega})$, and let $T$ be the solution to the scheme \eqref{eq:scheme}. Then,
		\begin{equation}\label{eq:error.estimate}
			\norm[V_h]{T-I_h\overline{T}}\lesssim h \norm[C^2(\overline{\Omega})]{\overline{T}}.
		\end{equation}
	\end{theorem}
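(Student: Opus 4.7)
The plan is to apply the Third Strang Lemma of \cite{DPD18}. The scheme \eqref{eq:scheme} is equivalent to $a_h(T,\varphi) = L_h(\varphi)$ for all $\varphi \in V_h$, where $L_h(\varphi) := \sum_{p\in\mesh}\varphi_p \sum_{\sigma\in\F(p)\cap\FG}\iint_\sigma g$ and
\begin{equation*}
a_h(T,\varphi) := \sum_{p\in\mesh} \varphi_p\bigl[\text{LHS of \eqref{eq:scheme:balance} at } T\bigr] - Rh_\Gamma \sum_{e\in\EGi} \varphi_e\bigl[\flux[\Gamma]{\sigma,e}(T) + \flux[\Gamma]{\tau,e}(T)\bigr];
\end{equation*}
indeed, testing against indicators of single cells recovers \eqref{eq:scheme:balance} and against indicators of single interior edges recovers \eqref{eq:scheme:conserv}. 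The targets are then (i) discrete coercivity $a_h(\varphi,\varphi) \gtrsim \norm[V_h]{\varphi}^2$ and (ii) consistency $|a_h(I_h\overline{T},\varphi) - L_h(\varphi)| \lesssim h \norm[C^2(\overline{\Omega})]{\overline{T}}\norm[V_h]{\varphi}$. Writing $a_h(T - I_h\overline{T}, T - I_h\overline{T}) = L_h(T - I_h\overline{T}) - a_h(I_h\overline{T}, T - I_h\overline{T})$ and combining (i) and (ii) then yields \eqref{eq:error.estimate}.

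For coercivity, summing by faces/edges and using \eqref{eq:cons.inner} together with $\varphi_\sigma = 0$ on $\FD$, the diffusive pieces of $a_h(\varphi,\varphi)$ collapse to the usual discrete forms and, by Assumption \ref{assum:fluxes}, are bounded below by $\rho_\Omega \seminorm[V_h,\Omega]{\varphi}^2 + R\rho_\Gamma h_\Gamma \seminorm[V_h,\Gamma]{\varphi}^2$. The main algebraic obstacle is the centred advection $\sum_\sigma \varphi_p \sum_e \varphi_e \brak{\vec{W}\cdot\vec{n}}{\sigma,e}$ combined with the reaction $-\sum_\sigma \varphi_p^2 \brak{\DIVG\vec{W}}{\sigma}$. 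Using the identities $\brak{\DIVG\vec{W}}{\sigma} = \sum_{e\in\E(\sigma)}\brak{\vec{W}\cdot\vec{n}}{\sigma,e}$ and $\varphi_p(\varphi_e - \varphi_p) = \tfrac{1}{2}(\varphi_e^2 - \varphi_p^2) - \tfrac{1}{2}(\varphi_e - \varphi_p)^2$, this combined contribution rewrites as
\begin{equation*}
-\tfrac{1}{2}\sum_{\sigma\in\FG}\varphi_p^2 \brak{\DIVG\vec{W}}{\sigma} - \tfrac{1}{2}\sum_{\sigma\in\FG}\sum_{e\in\E(\sigma)}(\varphi_e - \varphi_p)^2 \brak{\vec{W}\cdot\vec{n}}{\sigma,e},
\end{equation*}
the extra term $\sum_e \varphi_e^2 \sum_{\sigma:e\in\E(\sigma)}\brak{\vec{W}\cdot\vec{n}}{\sigma,e}$ produced by the identity having vanished thanks to $\varphi_e = 0$ on $\EG\setminus\EGi$ and, across interior edges, the smoothness of $\Gamma$ forcing $\vec{n}_{\sigma,e} = -\vec{n}_{\tau,e}$. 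The trace inequality \eqref{trace:ineq} then controls the first sum by $\tfrac{1}{2}\norm[C(\Gamma)]{(\DIVG\vec{W})^+}C_{\rm tr}\seminorm[V_h,\Omega]{\varphi}^2$, while $d_{pe}^\bot \le h_\Gamma$ controls the second by $\tfrac{1}{2}\norm[C(\Gamma)^d]{\vec{W}}h_\Gamma\seminorm[V_h,\Gamma]{\varphi}^2$; assumptions \eqref{assum:coer.Omega}--\eqref{assum:coer.R} ensure both losses are strictly absorbed into the diffusion lower bounds.

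Consistency is obtained by subtracting the exact identity \eqref{eq:exact_scheme} from $a_h(I_h\overline{T},\varphi)$ and estimating the four resulting pieces. The inner-diffusion error is $O(h)\norm[V_h]{\varphi}$ via \eqref{eq:cons.flux.Omega} and a discrete Cauchy--Schwarz weighted by $|\sigma|/d_{pq}$. The boundary advection and reaction errors, arising from replacing $\int_e \overline{T}\vec{W}\cdot\vec{n}_{\sigma,e}$ by $T_e\brak{\vec{W}\cdot\vec{n}}{\sigma,e}$ and $\iint_\sigma \overline{T}\DIVG\vec{W}$ by $T_p\brak{\DIVG\vec{W}}{\sigma}$, are controlled edge by edge (resp.\ face by face) via Taylor expansion of the smooth fields $\overline{T}$, $\vec{W}$, and sum to $O(h)\norm[V_h]{\varphi}$ through the trace inequality. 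The genuinely delicate piece is the stabilisation residual $Rh_\Gamma \sum_{\sigma,e}(\varphi_p - \varphi_e)\flux[\Gamma]{\sigma,e}(I_h\overline{T})$, for which the crude bound $|\flux[\Gamma]{\sigma,e}(I_h\overline{T})| \lesssim |e|\norm[C^2(\overline{\Omega})]{\overline{T}}$ only delivers $O(\sqrt{h_\Gamma})$. To recover $O(h)$, the trick is to use \eqref{eq:cons.flux.Gamma} to replace $\flux[\Gamma]{\sigma,e}(I_h\overline{T})$ by $-\int_e \nabla\overline{T}\cdot\vec{n}_{\sigma,e}$ modulo an $O(h_\Gamma^{3/2})\norm[V_h]{\varphi}\norm[C^2(\overline{\Omega})]{\overline{T}}$ correction, and then to discretely integrate by parts: the $\varphi_e$ contribution cancels once again by $\varphi_e = 0$ on $\EG\setminus\EGi$ and antisymmetry of the edge-normals, leaving the single cell-centred term $-Rh_\Gamma \sum_{\sigma\in\FG}\varphi_p \iint_\sigma \Delta_\Gamma \overline{T}$. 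The trace inequality bounds this by $h_\Gamma\norm[C^2(\overline{\Omega})]{\overline{T}}\seminorm[V_h,\Omega]{\varphi} \lesssim h \norm[C^2(\overline{\Omega})]{\overline{T}}\norm[V_h]{\varphi}$, and assembling all pieces completes the consistency estimate and hence the proof.
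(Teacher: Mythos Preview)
Your proof follows the paper's argument essentially step for step: the Third Strang Lemma setup, the coercivity via the identity $\varphi_p\varphi_e=\tfrac12(\varphi_e^2+\varphi_p^2)-\tfrac12(\varphi_p-\varphi_e)^2$ together with conservativity of $\brak{\vec{W}\cdot\vec{n}}{\sigma,e}$, the four-term consistency splitting, and in particular the integration-by-parts trick on the stabilisation residual (your term~4) are all exactly what the paper does.

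There is one place where your sketch is too loose and, read literally, would fail: the boundary-advection consistency error (your term~2). A plain edge-by-edge Taylor bound gives only $\big|\brak{\,\overline{T}\vec{W}\cdot\vec{n}}{\sigma,e}-\overline{T}_e\brak{\vec{W}\cdot\vec{n}}{\sigma,e}\big|\lesssim h_\Gamma|e|$, and since the test factor is $(\varphi_p-\varphi_e)$ the trace inequality is of no use; Cauchy--Schwarz against $\seminorm[V_h,\Gamma]{\varphi}\le h_\Gamma^{-1/2}\norm[V_h]{\varphi}$ then yields only $O(h_\Gamma^{1/2})\norm[V_h]{\varphi}$. The paper's key observation is that the edge interpolant is chosen as the \emph{average} $\overline{T}_e=\tfrac{1}{|e|}\int_e\overline{T}$, so that $\int_e(\overline{T}-\overline{T}_e)=0$ and one may freely subtract the edge average of $\vec{W}\cdot\vec{n}_{\sigma,e}$ inside the integral, gaining a second factor of $h_\Gamma$:
\[
\int_e(\overline{T}-\overline{T}_e)\,\vec{W}\cdot\vec{n}_{\sigma,e}
=\int_e(\overline{T}-\overline{T}_e)\Big(\vec{W}\cdot\vec{n}_{\sigma,e}-\tfrac{1}{|e|}\brak{\vec{W}\cdot\vec{n}}{\sigma,e}\Big)
\lesssim h_\Gamma^{2}\,|e|\,\norm[C^1]{\overline{T}}\norm[C^1]{\vec{W}}.
\]
This gives $|\term_{c,2}|\lesssim h_\Gamma^{3/2}\norm[V_h]{\varphi}$ via the $\Gamma$-seminorm (not the trace inequality, which is what handles the \emph{reaction} term). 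As the paper emphasises in Remark~\ref{rem:centred}, this mean-zero step is precisely what lifts the rate from $O(\sqrt{h})$ to $O(h)$, so it should be stated explicitly.
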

	
	\begin{proof} See Appendix \ref{appen:proof}.
	\end{proof}
	
	\begin{remark}[About Assumption \eqref{assum:coer}]\label{rem:assum:coer}
		Assumption \eqref{assum:coer.Omega} imposes a relative smallness only of the \emph{positive} part of $\DIVG\vec{W}$.
		In particular, this assumption holds if $\DIVG\vec{W}\le 0$. Assumption \eqref{assum:coer.R} shows how the user-defined parameter $R$ must be chosen to ensure the stability of the method.
	\end{remark}
	
	\begin{remark}[Regularity assumption on {$\overline{T}$}]
		In most situations, the $C^2$ regularity on $\overline{T}$ can be weakened to an $H^2$ regularity, upon additional technicalities that we do not address here to simplify the exposition. See, e.g., \cite[Section 7.4]{gdm} for lemmas useful for establishing consistency estimates under $H^2$-regularity of the function.
	\end{remark}
	
	\begin{remark}[Assumption \eqref{assum:mesh}]\label{rem:assum:mesh}
		In case Assumption \eqref{assum:mesh} is not satisfied, that is the points $\vec{x}_p$ corresponding to cells that touch $\Gamma$ do not lie on $\Gamma$, the scheme has to be slightly modified the following
		way:
		\begin{itemize}
			\item Additional unknowns on the faces on $\Gamma$ are introduced, so that $V_h$ is changed into
			\begin{align*}
				V_{h}:=\{\varphi=((\varphi_{p})_{p\in \mesh},(\varphi_\sigma)_{\sigma\in\FD\cup\FG},(\varphi_{e})_{e\in \EG})\,:{}&\,\varphi_p\in\R\,,\;\varphi_\sigma\in\R\,,\;\varphi_\sigma=0\mbox{ if $\sigma\in\FD$},\\
				&\varphi_e\in\R\,,\;\varphi_e=0\mbox{ if $e\not\in\EGi$}\}.
			\end{align*}
			A point $\vec{x}_\sigma$ is chosen on each $\sigma\in\FG$ and the interpolant \eqref{eq:interpolant} is extended by setting, for these faces, $\varphi_\sigma=\varphi(\vec{x}_\sigma)$.
			\item The seminorms $\seminorm[V_h,\Omega]{{\cdot}}$ and $\seminorm[V_h,\Gamma]{{\cdot}}$ are modified in the following way: in \eqref{eq:norm.Vh.Omega} the sum is taken over $\sigma\in\F$ with $\varphi_q=\varphi_\sigma$ whenever $\sigma\in\FG$; in \eqref{eq:norm.Vh.Gamma}, $\varphi_p$ is replaced with $\varphi_\sigma$.
			\item Fluxes $\flux{p,\sigma}$ are also considered for $\sigma\in\FG$ and the scheme consists in finding $T\in V_h$ solution to the conservativity equations \eqref{eq:scheme:conserv} and
			\begin{align}
				&\sum_{\sigma\in\F(p)}\flux{p,\sigma}(T)=0,\quad\forall p\in\mesh\,,\\
				&-\flux{p,\sigma}(T)+\sum_{e\in \E(\sigma)}T_{e}\brak{\vec{W}\cdot\vec{n}}{\sigma,e}
				-T_{\sigma}\brak{\DIVG\vec{W}}{\sigma}
				+Rh_\Gamma\sum_{e\in \E(\sigma)}\flux[\Gamma]{\sigma,e}(T) =\iint_{\sigma}g,\quad \forall \sigma\in\FG
				\label{sch.2}
			\end{align}
			where, in \eqref{sch.2}, $p$ is the only cell that has $\sigma$ as face.
			\item The coercivity assumption \eqref{eq:coer.flux.Omega} is changed into
			\[
			\sum_{\sigma\in \F\backslash\FG}\flux{p,\sigma}(\varphi)(\varphi_{p}-\varphi_{q})
			+\sum_{\sigma\in\FG} \flux{p,\sigma}(\varphi)(\varphi_p-\varphi_\sigma)\ge \rho_\Omega \seminorm[V_h,\Omega]{\varphi}^{2}.
			\]
		\end{itemize}
		The analysis performed in Appendix \ref{appen:proof} can then be adapted and leads to the same error
		estimate \eqref{eq:error.estimate}.
	\end{remark}

	\section{Numerical tests}\label{sec:tests}
	
	The numerical tests presented here are obtained using internal fluxes $\flux{p,\sigma}$ corresponding to a corrected version of the ones introduced in \cite{Medla.et.al2018}, and variants. For boundary fluxes $\flux[\Gamma]{\sigma,e}$, used only for stabilisation purposes, we utilise the ones provided by the Hybrid Mimetic Mixed method \cite{DEGH09}.

	\subsection{Description of the scheme}\label{subsec:method}
	
	\subsubsection{Inner fluxes}\label{sec:inner.flux}
	
	We consider a structured, but not necessarily Cartesian, grid of points on $\Omega$. These points are called representative points, as this is where we will look for an approximation of the potential $\overline{T}$. The structured grid assumption means that the representative points can be denoted by $\vec{x}_{i,j,k}$, where $i \in \{0,\ldots, I+1\}$, $j \in \{0,\ldots, J+1\}$, $k \in \{0,\ldots, K+1\}$, and we assume that the extremal points (corresponding to $i=0$, $i=I+1$, $j=0$, $j=J+1$, $k=0$ or $k=K+1$) lie on $\partial\Omega$. We split the set of indices of these extremal points into $\mathcal I_\Gamma=\{(i,j,k)\,:\,k=0\}$ and its complement $\mathcal I_D$, and we assume that $\mathcal I_\Gamma$ corresponds to the points $\vec{x}_{i,j,k}\in\Gamma$, so that $\mathcal I_D$ is the set of indices for the points on the Dirichlet boundary $\partial \Omega\backslash \Gamma$. The points associated with two extremal indices lie on the edges of $\Omega$, whereas those with three extremal indices describe the corners of $\Omega$. Note that $\Omega$ is not necessarily a hexahedron since its ``faces'' may not be planar. We refer to Figs. \ref{fig:cvs} and \ref{fig:nonunif.tess} for illustrations.
	
	For each $(i,j,k)\not\in\mathcal I_D$, a hexahedral finite volume is constructed around $\vec{x}_{i,j,k}$ using the following procedure. Note that points with indices in $\mathcal I_D$ are not associated with control volumes, as they lie on the Dirichlet boundary and they are therefore not associated with unknowns of the scheme.
	
	\begin{itemize}
		\item If $(i,j,k)\in \mathcal  I_{\rm int}:=[2,I-1]\times [2,J-1]\times [2,K-1]$, then setting $A=\{(m,n,o)\in \{-1,0,1\}^3\,:\,|m|+|n|+|o|=3\}$ we define, for $(m,n,o)\in A$, the vertex $\vec{x}_{i,j,k}^{m,n,o}$ as an average of the eight neighbouring points in the grid, one of them being $\vec{x}_{i,j,k}$:
		\begin{equation}
			\vec{x}_{i,j,k}^{m,n,o}=\frac{1}{8}\sum_{(a,b,c)\in B(m,n,o)}\vec{x}_{i+a,j+b,k+c},\label{eq:fv_vertices}
		\end{equation}
		where $B(m,n,o)=\{(m,n,o),$\allowbreak $(m,n,0),$\allowbreak $(m,0,o),$\allowbreak $(m,0,0),$\allowbreak $(0,n,o),$\allowbreak $(0,n,0),$\allowbreak $(0,0,o),$\allowbreak $(0,0,0)\}$.
		The finite volume around $\vec{x}_{i,j,k}$ is then the hexahedron (with possibly non-planar faces) defined by the vertices $\{\vec{x}_{i,j,k}^{m,n,o}\,:\,(m,n,o)\in A\}$. See Fig. \ref{fig:cvs} (left) for an illustration.
		\item If $(i,j,k)\in\mathcal I_\Gamma$, so that $k=0$, and $(i,j)\in [2,I-1]\times [2,J-1]$, we construct four vertices $\vec{x}_{i,j,k}^{m,n,1}$, for $(m,n,1)\in A$, as in \eqref{eq:fv_vertices}. Four more vertices $\vec{x}_{i,j,k}^{m,n,0}$ are constructed by averaging the four neighbouring vertices on $\Gamma$:
		\begin{equation}\label{eq:fv_vertices.face}
			\vec{x}_{i,j,k}^{m,n,0}=\frac{1}{4}\sum_{(a,b)\in C(m,n)}\vec{x}_{i+a,j+b,0},
		\end{equation}
		where $C(m,n)=\{(m,n),(m,0),(0,n),(0,0)\}$. The control volume associated with $\vec{x}_{i,j,0}$ is defined by the eight vertices thus constructed, and we notice that $\vec{x}_{i,j,0}$ lies on one of its faces (the one on $\Gamma$), so that \eqref{assum:mesh} is satisfied.
		\item If $(i,j,k)\not\in (\mathcal I_{\rm int}\cup \mathcal I_{\Gamma})$, $\vec{x}_{i,j,k}$ is associated with a control volume touching the Dirichlet boundary and built from four vertices constructed as in \eqref{eq:fv_vertices} and four other vertices constructed in a similar way as in \eqref{eq:fv_vertices.face}, using representative points on the Dirichlet boundary $\partial\Omega\backslash\Gamma$. See Fig. \ref{fig:cvs} (right).
		\item A similar construction is made for the remaining indices $(i,j,k)$, corresponding to control volumes with an edge along an edge of $\Omega$, or a vertex at one of the corners of $\Omega$; for example, the vertices of the control volumes lying on an edge of $\Omega$ are constructed as the average of two representative points $\vec{x}_{a,b,c}$ with two extremal indices. See Fig. \ref{fig:cvs} (right).
	\end{itemize}
	
	\begin{figure}
		\centering
		\begin{tabular}{ll}
			\includegraphics[width=0.5\textwidth]{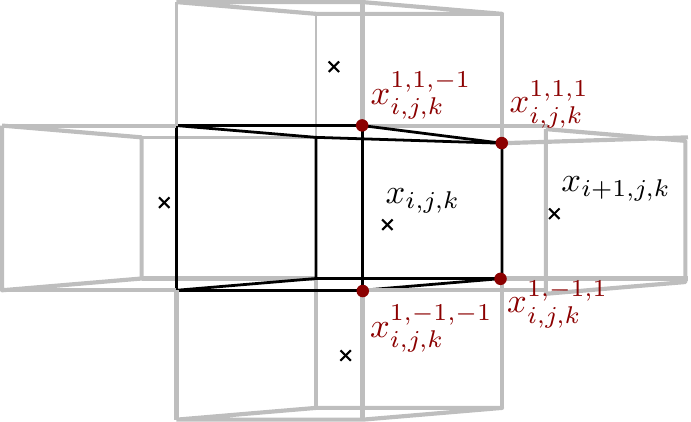} & \includegraphics[width=0.34\textwidth]{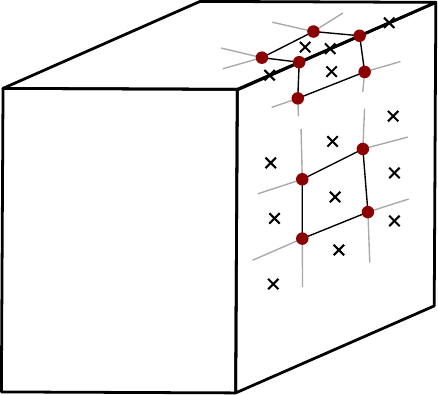}
		\end{tabular}
		\caption{Illustration of the construction of an internal control volume (left), and of the faces and edges of boundary control volumes (right).}
		\label{fig:cvs}
	\end{figure}
	
	A generic finite volume is therefore identified by a triplet $(i,j,k)\not\in\mathcal I_D$. For simplicity and to relate more to the unstructured notations used in Section \ref{sec:scheme}, we denote $(i,j,k)$ by $p$. The point $\vec{x}_{i,j,k}$ associated with $p$ is therefore denoted by $\vec{x}_p$. Any face $\sigma\in\F(p)\backslash \FG$ can be associated with two representative points on each side: $\vec{x}_p$ itself, and $\vec{x}_q$ which might either be associated with a genuine control volume if $\sigma\in \Fi$, or with $\sigma$ itself (as a degenerate cell $q$) if $\sigma\in\FD$. We write $\sigma=\sigma_{pq}$ and notice that $\sigma_{pq}$ may not be planar.
	
	The four vertices $(\vec{x}_{i,j,k}^{m,n,o})_{(m,n,o)}$ of $\sigma_{pq}$ are ordered in a counterclockwise way, respective to the orientation compatible with the outer normal to $p$, and we denote them by $\vec{x}_{pq}^{\oplus}$, $\vec{x}_{pq}^{\boxplus}$, $\vec{x}_{pq}^{\ominus}$, $\vec{x}_{pq}^{\boxminus}$. For $\vec{x}_{pq}^\genexp$ one of these vertices, we let $\mathcal R(\vec{x}_{pq}^\genexp)$ be the set of representative points involved in the construction of $\vec{x}_{pq}^*$; hence, if $\Card{\mathcal R(\vec{x}_{pq}^*)}\in \{8,4,2,1\}$ is the cardinality of $\mathcal R(\vec{x}_{pq}^*)$, we have
	\begin{equation}\label{conv.xpq}
		\vec{x}_{pq}^*=\frac{1}{\Card{\mathcal R(\vec{x}_{pq}^*)}}\sum_{\vec{y}\in \mathcal R(\vec{x}_{pq}^*)}\vec{y}.
	\end{equation}
	
	We define four vectors related to the face $\sigma_{pq}$: the unit vector $\vec{s}_{pq}$ which points from $\vec{x}_p$ to $\vec{x}_q$
	\begin{equation}\label{def:spq}
		\vec{s}_{pq}=\frac{\vec{x}_{q}-\vec{x}_{p}}{|\vec{x}_{q}-\vec{x}_{p}|},
	\end{equation}
	two tangent vectors to the face
	\[
	\vec{t}_{pq}^{\ocircle}=\frac{\vec{x}_{pq}^{\oplus}-\vec{x}_{pq}^{\ominus}}{|\vec{x}_{pq}^{\oplus}-\vec{x}_{pq}^{\ominus}|},\qquad
	\vec{t}_{pq}^{\square}=\frac{\vec{x}_{pq}^{\boxplus}-\vec{x}_{pq}^{\boxminus}}{|\vec{x}_{pq}^{\boxplus}-\vec{x}_{pq}^{\boxminus}|},
	\]
	and
	\[
	\widetilde{\vec{n}}_{pq}=\frac12 (\vec{x}_{pq}^{\oplus}-\vec{x}_{pq}^{\ominus})\times(\vec{x}_{pq}^{\boxplus}-\vec{x}_{pq}^{\boxminus}).
	\]
	Due to the orientation chosen on $\sigma_{pq}$ and the ordering of the vertices of this face, if $\vec{n}_{pq}:\sigma_{pq}\to\R^3$ is the pointwise outer unit normal to $p$ on $\sigma_{pq}$ we  have
	\begin{equation}\label{npq:ave}
		\widetilde{\vec{n}}_{pq}=\iint_{\sigma_{pq}}\vec{n}_{pq}.
	\end{equation}
	
	Since $(\vec{s}_{pq},\vec{t}_{pq}^{\ocircle},\vec{t}_{pq}^{\square})$ form a basis of $\R^3$, there are $\beta_{pq}>0$ and $(\alpha_{pq}^\ocircle,\alpha_{pq}^\square)\in\R^2$ such that
	\begin{equation}\label{npq:rep}
		\widetilde{\vec{n}}_{pq}=|\sigma_{pq}|\left(\frac{1}{\beta_{pq}}\vec{s}_{pq}-\frac{\alpha_{pq}^{\ocircle}}{\beta_{pq}}\vec{t}_{pq}^{\ocircle}-\frac{\alpha_{pq}^{\square}}{\beta_{pq}}\vec{t}_{pq}^{\square}\right).
	\end{equation}
	The numerical fluxes are then given by: for $p\in\mesh$, $\sigma\in\F(p)\backslash\FG$, and $\varphi\in V_h$,
	\begin{equation}
		\label{eq:flux_approx}
		\flux{p,\sigma_{pq}}(\varphi)  =|\sigma_{pq}|\Bigg(\frac{1}{\beta_{pq}}\frac{\varphi_{p}-\varphi_{q}}{d_{pq}}
		-\frac{\alpha_{pq}^{\ocircle}}{\beta_{pq}}\frac{\varphi_{pq}^\oplus-\varphi_{pq}^{\ominus}}{d_{pq}^{\ocircle}} 
		-\frac{\alpha_{pq}^{\square}}{\beta_{pq}}\frac{\varphi_{pq}^{\boxplus}-\varphi_{pq}^{\boxminus}}{d_{pq}^{\boxempty}}\Bigg),
	\end{equation}
	where
	\begin{itemize}
		\item $d_{pq}=|\vec{x}_p-\vec{x}_q|$ (as in Section \ref{sec:scheme}), $d_{pq}^{\ocircle}=|\vec{x}_{pq}^{\oplus}-\vec{x}_{pq}^{\ominus}|$ and $d_{pq}^{\boxempty}=|\vec{x}_{pq}^{\boxplus}-\vec{x}_{pq}^{\boxminus}|$, and
		\item for $*\in\{\oplus,\ominus,\boxplus,\boxminus\}$, each point $\vec{y}\in\mathcal R(\vec{x}_{pq}^\genexp)$ is associated with a genuine or degenerate cell $r$ (possibly an edge or corner of $\Omega$); we then let $\varphi_{\vec{y}}=\varphi_r$ (with $\varphi_r=0$ if $r$ is an edge or corner on $\partial\Omega$) and, following \eqref{conv.xpq}, the secondary unknown $\varphi_{pq}^\genexp$ located at the vertex $\vec{x}_{pq}^\genexp$ is defined by
		\begin{equation}\label{def:varphipq}
			\varphi_{pq}^\genexp=\frac{1}{\Card{\mathcal R(\vec{x}_{pq}^\genexp)}}\sum_{\vec{y}\in \mathcal R(\vec{x}_{pq}^\genexp)}\varphi_{\vec{y}}.
		\end{equation}
	\end{itemize}

\begin{remark}[Correction of the flux in \cite{Medla.et.al2018}]

In \cite{Medla.et.al2018}, a similar flux is defined with right-hand side multiplied by $\frac{|\widetilde{\vec{n}}_{pq}|}{|\sigma_{pq}|}$ in \eqref{eq:flux_approx} -- that is, $\beta_{pq}$ is multiplied by $\frac{|\sigma_{pq}|}{|\widetilde{\vec{n}}_{pq}|}$. The consistency analysis in Section \ref{sec:cons.in.flux} shows that this choice of flux is only consistent if the faces are asymptotically flat (that is, $\frac{|\widetilde{\vec{n}}_{pq}|}{|\sigma_{pq}|}\to 1$ as the mesh size tends to zero). The flux we define above is consistent even if some faces remain non-flat as the mesh size tends to zero.
\end{remark}
	
	\subsubsection{Surface fluxes on $\Gamma$}\label{sec:surface.flux}
	
	We use the fluxes of the Mixed Finite Volumes \cite{DE06}, which is the finite volume presentation of the Hybrid Mimetic Mixed method (see \cite{DEGH09} and \cite[Section 13.2.2]{gdm}). Let $\varphi\in V_h$ and define, for $\sigma\in\FG$ and denoting as usual by $p\in\mesh$ the unique control volume that contains $\sigma$ in its boundary,
	\begin{align}
		\nabla_{\Gamma,\sigma}\varphi={}&\frac{1}{|\sigma|}\sum_{e\in\E(\sigma)}|e|\varphi_e\vec{n}_{\sigma,e},\label{hmm:def.grad}\\
		S_{p,e}(\varphi)={}&\varphi_e-\varphi_p-\nabla_{\Gamma,\sigma}\varphi\cdot (\overline{\vec{x}}_e-\vec{x}_p),\quad\forall e\in\E(\sigma),
		\label{hmm:def.T}
	\end{align}
	where $\overline{\vec{x}}_e=\frac{1}{|e|}\int_e\vec{x}$ is the centre of mass of $e$. Assuming that $\vec{n}_{\sigma,e}$ is constant along $e$ (but see Remark \ref{rem:curved.e} below), the Stokes formula and the definition \eqref{eq:interpolant} of $I_h$ easily show that $\nabla_{\Gamma,\sigma}$ is a consistent approximation of the tangential gradient on $\Gamma$ in the sense that, if $\varphi\in C^2(\Gamma)$,
	\begin{equation}\label{hmm:cons.grad}
		\nabla_{\Gamma,\sigma}I_h\varphi=\frac{1}{|\sigma|}\int_\sigma \nabla_{\Gamma}\varphi.
	\end{equation}
	As a consequence, $S_{p,e}$ can be seen as the remainder of a discrete first order Taylor expansion.
	The HMM fluxes are then defined by: for all $\varphi\in V_h$ and all $\sigma\in\FG$, the family $(\flux[\Gamma]{\sigma,e}(\varphi))_{e\in\E(\sigma)}$ is the unique solution to
	\begin{equation}\label{hmm:def.flux}
		\sum_{e\in\E(\sigma)}\flux[\Gamma]{\sigma,e}(\varphi)(\psi_p-\psi_e)=|\sigma|\nabla_{\Gamma,\sigma}\varphi\cdot
		\nabla_{\Gamma,\sigma}\psi+\sum_{e\in\E(\sigma)}\frac{|e|}{d_{pe}^\bot}S_{p,e}(\varphi)S_{p,e}(\psi)\,,\quad
		\forall \psi\in V_h,
	\end{equation}
	where we recall that $d_{pe}^\bot$ is the orthogonal distance between $\vec{x}_p$ and $e$.
	
	\begin{remark}[Curved edges]\label{rem:curved.e}
		The definition \eqref{hmm:def.grad} is consistent if the only curvature of the edges is due to the curvature of $\Gamma$, that is, the unit normal vector $\vec{n}_{\sigma,e}$ to $\sigma$ on $e$ along $\Gamma$ is constant. However, the HMM remains asymptotically consistent on faces with slightly curved edges \cite{bre-08-06}, in the sense that
		\[
		\left|\vec{n}_{\sigma,e}-\frac{1}{|e|}\int_e \vec{n}_{\sigma,e}\right|=\mathcal O(h_\Gamma).
		\]
	\end{remark}
	
	\subsubsection{Properties of the fluxes}\label{sec:prop.fluxes}
	
	In this section, we show that, upon some mesh regularity assumption (that can be checked in practice during implementation), the inner and surface fluxes described in Sections \ref{sec:inner.flux} and \ref{sec:surface.flux} are coercive and consistent. As a consequence, Theorem \ref{th:error.estimate} applies to the numerical scheme \eqref{eq:scheme} based on these fluxes, and the error estimate \eqref{eq:error.estimate} holds for this scheme.
	
	\medskip
	
	We first define three mesh regularity factors. The first two are required to establish the properties of the inner fluxes (see Appendix), whereas the third one is linked to the properties of the HMM fluxes
	
	\begin{enumerate}
		\item The first regularity factor is related to the faces not lying on $\Gamma$:
		\begin{equation}
			\begin{aligned}
				{\rm reg}_{\mesh,\Omega}:={}&\max\left\{\frac{|\vec{d}_{pq}^\genexp|}{d_{pq}^\genexp}\,:\,
				\sigma_{pq}\in\F\backslash\FG\,,\,
				\genexp\in\{\ominus,\oplus,\boxminus,\boxplus\}\right\}\\
				&+\max\left\{\frac{1}{\left|\det(\vec{s}_{pq},\vec{t}_{pq}^{\ocircle},\vec{t}_{pq}^{\square})\right|}\,:\,
				\sigma_{pq}\in\F\backslash\FG \right\},
			\end{aligned}
			\label{regfac:cons}
		\end{equation}
		where $\vec{d}_{pq}^\genexp=(|\vec{x}_{r}-\vec{x}_{pq}^\genexp|)_{r\in \mathcal R(\vec{x}_{pq}^\genexp)}$ and $d_{pq}^\genexp=d_{pq}^\ocircle$ if $\genexp\in\{\ominus,\oplus\}$, $d_{pq}^\genexp=d_{pq}^\boxempty$ if $\genexp\in\{\boxminus,\boxplus\}$. 
		
		\item The definition of the second regularity factor requires the introduction of a few notations associated to a pair $(\vec{x}_{pq}^\star,r)$, where $\vec{x}_{pq}^\genexp$ is a vertex of a face $\sigma_{pq}\in\F\backslash\FG$ and $r=p$ or $q$. We refer to Figure \ref{fig:neigh} for an illustration of these notations.
		\begin{itemize}
			\item If $\vec{x}_{pq}^\genexp\in\Omega$ is an internal vertex, we let $F_{r,pq}^\genexp$ be the set of the two control volumes that are neighbours of $r$, have $\vec{x}_{pq}^\genexp$ as vertex, but are neither $p$ or $q$. The two control volumes in $F_{r,pq}^\genexp$ have two neighbours in common: $r$ itself, and another control volume that we denote by $e_{r,pq}^\genexp$.
			\item If $\vec{x}_{pq}^\genexp\in \Gamma$, we let $F_{r,pq}^\genexp$ be the set made of the only control volume neighbour of $r$, that has $\vec{x}_{pq}^\genexp$ as vertex, but that is not $p$ or $q$.
			\item If $\vec{x}_{pq}^\genexp$ lies on the Dirichlet boundary $\partial\Omega\backslash \Gamma$, it is the vertex of a face $\sigma\in\F(r)\cap\FD$. We let $F_{r,pq}^\genexp$ be the set made of this face, which is identified to the degenerate control volume $q$.
		\end{itemize}
		\begin{figure}[h!]
			\centering
			\input{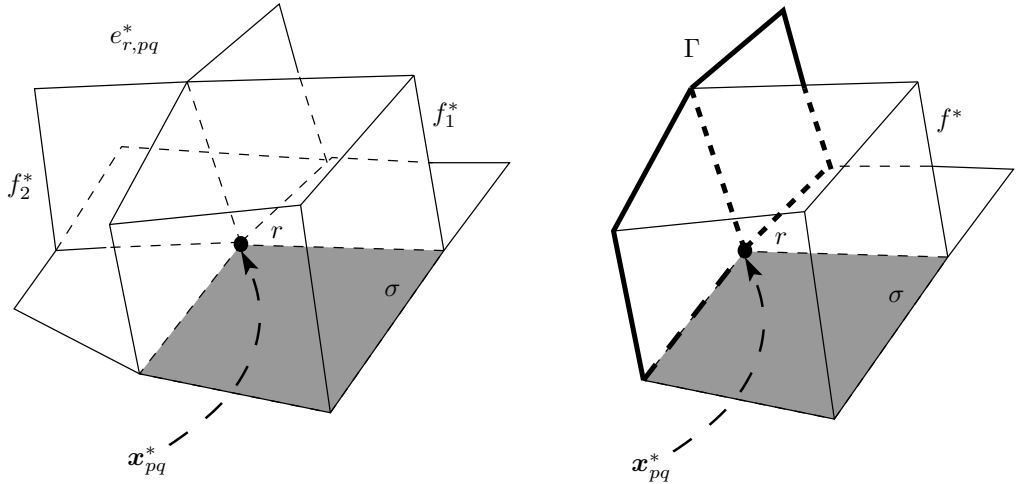}
			\caption{Illustration of the local labels around a face $\sigma=\sigma_{pq}$ and one of its vertices $\vec{x}_{pq}^\genexp$. Left: $\vec{x}_{pq}^\genexp$ is an internal vertex (then $F_{r,pq}^\genexp=\{f_1^\genexp,f_2^\genexp\}$);
				right: $\vec{x}_{pq}^\genexp$ lies on $\Gamma$ (then $F_{r,pq}^\genexp=\{f^\genexp\}$).}\label{fig:neigh}
		\end{figure}
		We then need to know, for a given face $\sigma_{ab}\in\F\backslash\FG$, for which triplet $(p,q,\genexp)$ we have, for some $r=p$ or $q$ and $f\in F^\genexp_{r,pq}$, $\{e^\genexp_{r,pq},f\}=\{a,b\}$ or $\{r,f\}=\{a,b\}$. These triplets are described by the following two sets
		\begin{equation}\label{def:XYab}
			\begin{aligned}
				X_{ab}:=\{(p,q,\genexp)\,:{}&\,\sigma_{pq}\in \F\backslash(\FG\cup\FD)\\
				&\mbox{ and, for some $r\in \{p,q\}$ and $f\in F^\genexp_{r,pq}$, $\{a,b\}=\{e^\genexp_{r,pq},f\}$}\},\\
				Y_{ab} := \{(p,q,\genexp)\,:{}&\,\sigma_{pq}\in \F\backslash(\FG\cup\FD)\\
				&\mbox{ and, for some $r\in \{p,q\}$ and $f\in F^\genexp_{r,pq}$, $\{a,b\}=\{r,f\}$}\}.
			\end{aligned}
		\end{equation}
		The second regularity factor is then $\varrho_{\mesh,\Omega}$, assumed to be $>0$, such that 
		\begin{equation}\label{def:varrho.h}
			\begin{aligned}
				\varrho_{\mesh,\Omega}:=\min\Bigg\{\Bigg[\frac{1}{\beta_{ab}}-\epsilon_{ab}\frac{\alpha_{ab}^{\ocircle}d_{ab}}{2\beta_{ab}d_{ab}^{\ocircle}}-{}&\epsilon_{ab}\frac{\alpha_{ab}^{\square}d_{ab}}{2\beta_{ab}d_{ab}^{\square}}\Bigg]-\frac{1}{16}\sum_{(p,q,\genexp)\in X_{ab}}\zeta_{X,pq}^\genexp\frac{|\sigma_{pq}|d_{ab}\alpha_{pq}^{\diamondsuit}}{|\sigma_{ab}|d_{pq}^{\diamondsuit}\beta_{pq}}\\
				& \quad-\frac{1}{16}\sum_{(p,q,\genexp)\in Y_{ab}}\zeta_{Y,pq}^\genexp\frac{|\sigma_{pq}|d_{ab}\alpha_{pq}^{\diamondsuit}}{|\sigma_{ab}|d_{pq}^{\diamondsuit}\beta_{pq}}\,:\,\sigma_{ab}\in\F\backslash\FG\Bigg\},
			\end{aligned}
		\end{equation}
		where
		\begin{equation}\label{def:eps.zeta}
			\epsilon_{ab}=\left\{\begin{array}{ll}0&\mbox{ if }\sigma_{ab}\in\FD,\\
				1&\mbox{ otherwise },
			\end{array}\right.\qquad
			(\zeta_{X,pq}^\genexp,\zeta_{Y,pq}^\genexp)=\left\{\begin{array}{ll}(1,3)&\mbox{ if }\vec{x}_{pq}^\genexp\in\Omega,\\
				(0,4)&\mbox{ if }\vec{x}_{pq}^\genexp\in\Gamma,\\
				(0,8)&\mbox{ if }\vec{x}_{pq}^\genexp\in\partial\Omega\backslash\Gamma,
			\end{array}\right.
		\end{equation}
		and 
		\begin{equation}\label{def:diamondsuit}
			\diamondsuit=\left\{\begin{array}{ll}\ocircle&\mbox{ if }\genexp\in\{\oplus,\ominus\},\\
				\square&\mbox{ if }\genexp\in\{\boxplus,\boxminus\}.
			\end{array}\right.
		\end{equation}
		
		\item The third regularity factor is
		\begin{equation}
			\label{def:reg.gamma}
			\begin{aligned}
				{\rm reg}_{\mesh,\Gamma}:={}&\max\left\{\frac{{\rm diam}(\sigma)}{d_{pe}^\bot}\,:\,\sigma\in\FG\,,\;e\in\E(\sigma)\right\}\\
				&+\max\left\{\frac{{\rm diam}(\sigma)}{{\rm diam}(\tau)}\,:\,e\in\EGi\,,\;\mbox{$(\sigma,\tau)$  faces on each side of $e$}\right\}.
			\end{aligned}
		\end{equation}
		
	\end{enumerate}
	
	\begin{proposition}[Properties of the fluxes]\label{prop.fluxes}
		The fluxes defined in Sections \ref{sec:inner.flux} and \ref{sec:surface.flux} satisfy Assumption \ref{assum:fluxes} with $\varrho_\Omega=\varrho_{\mesh,\Omega}$, $\Ccons{\Omega}$ depending only on an upper bound of ${\rm reg}_{\mesh}+{\rm reg}_{\mesh,\Omega}$, and $\varrho_\Gamma>0$ and $\Ccons{\Gamma}$ depending only on an upper bound of ${\rm reg}_{\mesh,\Gamma}$.
	\end{proposition}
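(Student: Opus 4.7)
The proof naturally splits into four independent tasks: coercivity and consistency for the inner fluxes from Section \ref{sec:inner.flux}, and the same two properties for the surface HMM fluxes from Section \ref{sec:surface.flux}. The surface part is essentially a restatement of known HMM results, while the inner part requires bespoke algebra organised around the representation \eqref{npq:rep}.

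For the consistency of the inner fluxes, the plan is to substitute $\varphi = I_h u$ into \eqref{eq:flux_approx} and Taylor-expand each difference quotient around $\vec{x}_p$. Using \eqref{def:varphipq}, the vertex values $(I_h u)_{pq}^\genexp$ are averages of $u$ at the representative points contributing to $\vec{x}_{pq}^\genexp$, so $(I_h u)_{pq}^\oplus - (I_h u)_{pq}^\ominus = -d_{pq}^\ocircle \nabla u(\vec{x}_p)\cdot \vec{t}_{pq}^\ocircle + O(h |\vec{d}_{pq}^\genexp|\|u\|_{C^2})$, the remainders being controlled by the first part of ${\rm reg}_{\mesh,\Omega}$. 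Combining the three difference quotients with the coefficients in \eqref{eq:flux_approx} and invoking the representation \eqref{npq:rep} yields $\flux{p,\sigma_{pq}}(I_h u) = -\nabla u(\vec{x}_p)\cdot \widetilde{\vec{n}}_{pq} + O(h|\sigma_{pq}| \|u\|_{C^2})$, and \eqref{npq:ave} together with a Taylor expansion converts the leading term into $-\iint_{\sigma_{pq}} \nabla u\cdot \vec{n}_{pq}$ up to the same order. This is exactly where the correction $1/\beta_{pq}$ (rather than $|\widetilde{\vec{n}}_{pq}|/(|\sigma_{pq}|\beta_{pq})$) is needed, since \eqref{npq:ave} is a $|\sigma_{pq}|$-scaled identity.

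The coercivity of the inner fluxes is the main obstacle and requires careful bookkeeping. Expanding $\sum_{\sigma\in\F\backslash\FG}\flux{p,\sigma}(\varphi)(\varphi_p-\varphi_q)$ using \eqref{eq:flux_approx} produces the positive diagonal contribution $\sum_{\sigma_{ab}}\frac{|\sigma_{ab}|}{\beta_{ab}d_{ab}}(\varphi_a-\varphi_b)^2$ plus cross terms involving $(\varphi_p-\varphi_q)(\varphi_{pq}^\oplus-\varphi_{pq}^\ominus)$ and its $\square$-analogue. I would apply Young's inequality to the cross terms with weight $d_{ab}/d_{pq}^\diamondsuit$ to keep the leading homogeneity, producing a penalty $\frac{1}{2}\frac{|\sigma_{pq}|\alpha_{pq}^\diamondsuit}{\beta_{pq}d_{pq}^\diamondsuit}\bigl((\varphi_p-\varphi_q)^2 d_{pq}^\diamondsuit/d_{ab}+(\varphi_{pq}^\diamondsuit-\varphi_{pq}^{\diamondsuit'})^2 d_{ab}/d_{pq}^\diamondsuit\bigr)$. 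The first piece is absorbed into the diagonal using the two bracketed terms in the definition of $\varrho_{\mesh,\Omega}$ (indexed by the face itself with the factor $\epsilon_{ab}$ handling the Dirichlet case where there is no Young split needed). The second piece must be reindexed: via \eqref{def:varphipq} each difference $\varphi_{pq}^\diamondsuit-\varphi_{pq}^{\diamondsuit'}$ expands as $\frac{1}{\Card{\mathcal R}}\sum(\varphi_{\vec y}-\varphi_{\vec z})$, and each difference $\varphi_{\vec y}-\varphi_{\vec z}$ telescopes along a short path of cells neighbouring $\vec{x}_{pq}^\diamondsuit$ into a sum of elementary differences $(\varphi_a-\varphi_b)$ where $\sigma_{ab}$ is a face of the mesh. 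The sets $X_{ab}$ and $Y_{ab}$ in \eqref{def:XYab} are precisely the inverse index: they collect all triplets $(p,q,\genexp)$ whose telescoped path uses $\sigma_{ab}$, and the constants $(\zeta_{X,pq}^\genexp,\zeta_{Y,pq}^\genexp)$ in \eqref{def:eps.zeta} count the (at most) 16 partial differences contributing, distinguishing whether $\vec{x}_{pq}^\genexp$ is interior, on $\Gamma$ (fewer neighbours, no $X$ contribution), or on the Dirichlet boundary (degenerate cells, doubled $Y$ contribution). After this reassembly the coefficient of $(\varphi_a-\varphi_b)^2/d_{ab}$ is exactly the expression in the minimum defining $\varrho_{\mesh,\Omega}$, so positivity of $\varrho_{\mesh,\Omega}$ yields \eqref{eq:coer.flux.Omega} with $\rho_\Omega=\varrho_{\mesh,\Omega}$.

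For the surface fluxes, I would test \eqref{hmm:def.flux} with $\psi=\varphi$ to get $\sum_{e\in\E(\sigma)}\flux[\Gamma]{\sigma,e}(\varphi)(\varphi_p-\varphi_e)=|\sigma||\nabla_{\Gamma,\sigma}\varphi|^2+\sum_e\frac{|e|}{d_{pe}^\bot}S_{p,e}(\varphi)^2$. A standard HMM inequality \cite[Lemma~13.11]{gdm} bounds $\sum_e\frac{|e|}{d_{pe}^\bot}(\varphi_p-\varphi_e)^2$ from above by a ${\rm reg}_{\mesh,\Gamma}$-dependent multiple of the right-hand side, which yields \eqref{eq:coer.flux.Gamma}. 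Consistency follows from \eqref{hmm:cons.grad} and a first-order Taylor bound on $S_{p,e}(I_h u)=(I_h u)_e - (I_h u)_p - \nabla_{\Gamma,\sigma}(I_h u)\cdot(\overline{\vec{x}}_e-\vec{x}_p)$, which gives $|S_{p,e}(I_h u)|\lesssim h_\Gamma^2\|u\|_{C^2(\overline{\Omega})}$, and then the flux identity \eqref{hmm:def.flux} with a suitable test function (e.g.\ the characteristic unknown of edge $e$) converts these bounds into \eqref{eq:cons.flux.Gamma}. The argument is standard for flat faces; the curved-edge correction from Remark \ref{rem:curved.e} only adds a controlled $\mathcal O(h_\Gamma)$ perturbation that is absorbed into $\Ccons{\Gamma}$.
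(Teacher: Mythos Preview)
Your proposal is correct and follows essentially the same route as the paper: for the inner fluxes, Taylor expansion combined with \eqref{npq:rep}--\eqref{npq:ave} for consistency (the paper expands around a point $\vec{x}_\sigma\in\sigma_{pq}$ rather than $\vec{x}_p$, and packages the barycentric remainder via a separate lemma, but this is cosmetic), and Young's inequality plus telescoping of $\varphi_{pq}^{\oplus}-\varphi_{pq}^{\ominus}$ for coercivity; for the surface fluxes, direct appeal to the HMM machinery of \cite[Chapter~13]{gdm}.

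Two small points of alignment are worth noting. First, to recover \emph{exactly} $\rho_\Omega=\varrho_{\mesh,\Omega}$ as defined in \eqref{def:varrho.h}, the paper uses the \emph{unweighted} Young inequality $xy\ge -\tfrac12 x^2-\tfrac12 y^2$; your weighted version would produce a different (though equally legitimate) regularity factor, as the paper itself observes in Remark~\ref{rem:alt.varrho}. Second, your description of the telescoping as ``along a short path of cells'' is the right intuition, but the paper carries this out by an explicit coefficient-splitting argument (illustrated in three figures, one per vertex location: interior, on $\Gamma$, on the Dirichlet boundary) that writes $\varphi_{pq}^{\oplus}-\varphi_{pq}^{\ominus}$ as a convex combination of 32 signed face-differences; the convexity of the square then delivers the bound with precisely the weights $\zeta_{X,pq}^\genexp,\zeta_{Y,pq}^\genexp$ and the factor $1/16$ appearing in \eqref{def:varrho.h}. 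Your abstract path-telescoping would work, but matching the stated constants requires this specific decomposition.
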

	
	\begin{proof} See Appendix \ref{appen:prop.method}.
	\end{proof}
	
	\begin{remark}[About the regularity factors]
		Bounding ${\rm reg}_{\mesh,\Omega}$ above imposes the proximity of a vertex $\vec{x}_{pq}^\genexp$ and the representative points $\mathcal R(\vec{x}_{pq}^\genexp)$ involved in its definition, as well as the non-degeneracy of the faces (whose diagonals $\vec{t}_{pq}^\ocircle$ and $\vec{t}_{pq}^\boxempty$ must have a minimal angle) and the transversality of the vector $\vec{x}_p\vec{x}_q$ and the face $\sigma_{pq}$. All these properties are natural given our construction of the control volumes.
		
		The regularity factor ${\rm reg}_{\mesh,\Gamma}$ plays the same role, for the mesh $\FG$ of $\Gamma$, as the regularity factor ${\rm reg}_{\mesh}$ for the mesh $\mesh$ of $\Omega$. See Remark \ref{rem:reg.mesh} for an interpretation of these terms.
		
		Bounding $\varrho_{\mesh,\Omega}$ below imposes that faces that share a common vertex must have comparable measures and diagonal lengths (the terms $\frac{|\sigma_{pq}|}{|\sigma_{ab}|}$ and $\frac{d_{ab}}{d_{pq}^\diamondsuit}$ remain bounded), and that $\vec{s}_{pq}$ is ``not too far'' from the orthogonal direction to $\sigma_{pq}$ (so that, recalling \eqref{npq:rep}, $\beta_{pq}$ remains close to $1$ while $\alpha_{pq}^\ocircle$ and $\alpha_{pq}^\boxempty$ remain small compared to $1$).
		
		All these regularity factors, as well as ${\rm reg}_\mesh$, are easy to numerically evaluate for a given mesh during the implementation. If, as the mesh is refined, these computed factors remain bounded above (for ${\rm reg}_{\mesh}$, ${\rm reg}_{\mesh,\Omega}$ and ${\rm reg}_{\mesh,\Gamma}$) or below (for $\varrho_{\mesh,\Omega}$), then it ensures the robustness and accuracy of the numerical output since the error estimate \eqref{eq:error.estimate} then holds. Note however that these conditions on the regularity factors are merely sufficient, not necessary; the scheme can still, in some cases, converge even if these factors do not remain properly bounded.
	\end{remark}
	
	\subsection{Alternative schemes}
	
	In the numerical tests, we will also present the results using two alternative schemes to the ones described above. The first alternative scheme is similar to \eqref{eq:exact_scheme} in a way that it approximate the oblique derivative as an advection equation on the boundary. It uses an upwind discretisation, instead of a numerically stabilized centred discretisation, for the convective term on the boundary. The second alternative approach is similar to the approximation of inner fluxes \eqref{eq:flux_approx}. It approximates the fluxes through a boundary face on $\Gamma$ by splitting the normal derivative into an oblique component, in the direction $\vec{V}$, and a tangential component to $\Gamma$. Similar splitting, but just on uniform rectangular, radial or spherical grids, was presented in \cite{Macak2012,Macak2014}, where, however, additional points outside domain for treatment of normal derivative were introduced which is made possible with uniform structured grids.

	\subsubsection*{Upwind scheme}
	
	The boundary advection term $\brak{\overline{T}\vec{W}\cdot\vec{n}}{\sigma,e}$ in \eqref{eq:exact_scheme} is here discretised using an upwind approach (which, contrary to \eqref{eq:scheme}, does not require the introduction of numerical diffusion for stabilisation). The resulting scheme has the form
	\begin{equation}
		\begin{aligned}\label{eq:upwind_scheme}
			\sum_{\sigma\in \F(p)\backslash\FG}\flux{p,\sigma}(T)+\sum_{\sigma\in \F(p)\cap\FG}{}&\sum_{e\in \E(\sigma)}\flux[\Gamma,{\rm\scriptsize adv}]{\sigma,e}(T)-\sum_{\sigma\in \F(p)\cap\FG}T_{p}\brak{\DIVG\vec{W}}{\sigma}\\
			& =\sum_{\sigma\in \F(p)\cap\FG}\iint_{\sigma}g,\qquad \forall p\in\mesh,
		\end{aligned}
	\end{equation}
	where the boundary advective numerical flux $\flux[\Gamma,{\rm\scriptsize adv}]{\sigma,e}(T)$, which approximates $\brak{\,\overline{T}\vec{W}\cdot\vec{n}}{\sigma,e}$, is given by
	\[
	\flux[\Gamma,{\rm\scriptsize adv}]{\sigma,e}(T)= \left\{\begin{array}{ll} T_q\brak{\vec{W}\cdot\vec{n}}{\sigma,e}&\quad\mbox{ if $\brak{\vec{W}\cdot\vec{n}}{\sigma,e}<0$},\\
	T_p\brak{\vec{W}\cdot\vec{n}}{\sigma,e}&\quad\mbox{ if $\brak{\vec{W}\cdot\vec{n}}{\sigma,e}\ge 0$}.\end{array}\right.
	\]
	\begin{remark}[Theoretical analysis]\label{rem:analysis.upwind}
		The theoretical analysis of this scheme can be conducted in a similar way as the scheme in Section \ref{subsec:method}, but leads to an $\mathcal O(\sqrt{h})$ theoretical convergence rate (see in particular Remark \ref{rem:centred}).
	\end{remark}
	
	\subsubsection*{Splitting scheme}
	
	Here, the oblique derivative is not recast as a normal component and a boundary advective component. Instead, it is directly used together with a tangential approximation to reconstruct the normal fluxes.
	The resulting scheme has the form
	\begin{equation}
		\label{eq:split_scheme}
		\sum_{\sigma\in \F(p)\backslash\FG}\flux{p,\sigma}(T) + \sum_{\sigma\in\F(p)\cap\FG}\mathcal{F}^{\Gamma}_{p,\sigma}(T) =\sum_{\sigma\in \F(p)\cap\FG}\iint_{\sigma}g,\quad \forall p\in\mesh
	\end{equation}
	where the numerical normal flux $\mathcal{F}^{\Gamma}_{p,\sigma}(T)$, that approximates $\nabla\overline{T}\cdot\vec{n}$, is given by
	\begin{multline}
		\mathcal{F}^{\Gamma}_{p,\sigma}(T) \\
		=|\sigma|\Bigg(\frac{1}{\beta_\sigma}g 
		-\frac{\alpha_\sigma^{\ocircle}}{\beta_\sigma}\frac{\frac{1}{4}\sum_{r\in \mathcal R(x_{\sigma}^{\oplus})}T_{r}-\frac{1}{4}\sum_{r\in \mathcal R(x_\sigma^{\ominus})}T_{r}}{d_\sigma^{\ocircle}} 
		-\frac{\alpha_\sigma^{\square}}{\beta_\sigma}\frac{\frac{1}{4}\sum_{r\in \mathcal R(x_{\sigma}^{\boxplus})}T_{r}-\frac{1}{4}\sum_{r\in \mathcal R(x_\sigma^{\boxminus})}T_{r}}{d_\sigma^{\boxempty}}\Bigg).\label{eq:split_flux_approx}
	\end{multline}
	Here, the coefficients $\beta_{\sigma}$, $\alpha_{\sigma}^{\square}$ and $\alpha_{\sigma}^{\ocircle}$ are given by \eqref{npq:rep} with $\vec{s}_{pq} = \vec{V}(\vec{x}_p)$. They therefore correspond to the decomposition of $\widetilde{\vec{n}}_{pq}$ on the basis $(\vec{V}(\vec{x}_p),\boldsymbol{t}_{\sigma}^{\square},\boldsymbol{t}_{\sigma}^{\ocircle})$. The equation \eqref{eq:split_flux_approx} approximates $\nabla\overline{T}\cdot\vec{n}$ using the oblique derivative $\nabla\overline{T}\cdot\vec{V}=g$ (see \eqref{eq:obliq_BC}) and a tangential component, reconstructed from the boundary values of $T$ using the same principles as in Section \ref{sec:inner.flux}.
	
	\begin{remark}[Theoretical analysis]\label{rem:analysis.split}
		Given the close proximity of the approximation \eqref{eq:split_flux_approx} with the discretisation \eqref{eq:flux_approx}, the ideas developed in Appendix \ref{appen:prop.method} could be adapted to yield an error estimate for this scheme. However, when establishing the coercivity of the method, additional boundary terms would have to be accounted for in the regularity factor \eqref{def:varrho.h}. The scale of these additional negative terms is proportional to how tangential the oblique vector is, and the method fails to be coercive for problems with an oblique field that is too tangential to the boundary of the domain.
	\end{remark}
	
	\subsection{Results}

	To test the proposed methods we present three sets of numerical experiments. For all experiments, the exact solution is chosen to be $\overline{T}(\boldsymbol{x})=\frac{1}{\boldsymbol{x}-\boldsymbol{x}_{0}}$,
	where $\boldsymbol{x}_{0}=(-0.3,-0.2,-0.1)$. The regularity parameters
	\eqref{def:regh}, \eqref{regfac:cons}, \eqref{def:varrho.h}, and
	\eqref{def:reg.gamma} and the Experimental Order of Convergence
	(EOC) are presented. 
	
	\subsubsection{Cubic domain and non-uniform mesh}\label{sec:cubic.domain}
	
	The computational domain $\Omega$ for the first set of experiments is a cube
	with unit edge length. The boundary $\Gamma$, on which the oblique boundary
	condition is prescribed, corresponds to the bottom face of the cube. The mesh is a non-uniform one obtained 
	constructing first a uniform grid with  distance between representative points equal to $h_{u}$, and then moving each 
	point by a random vector $\boldsymbol{r}$ with components in $(-0.15h_{u},0.15h_{u})$.
	Points on $\partial\Omega$ are only moved in a direction tangential to the boundary. Experiments with different oblique vector fields are presented, and the regularity parameters are presented in
	Table \ref{tab:cube_ref_param}. We notice that all parameters remain in a range that makes Theorem \ref{th:error.estimate} and Proposition \ref{prop.fluxes} applicable.
	
	\begin{table}
		\begin{centering}
			\begin{tabular}{|c|c|c|c|c|}
				\hline 
				$h$ & ${\rm reg}_{\mesh}$ & ${\rm reg}_{\mesh,\Gamma}$ & ${\rm reg}_{\mesh,\Omega}$ & $\varrho_{\mesh,\Omega}$\\
				\hline 
				\hline 
				8.511e-01 & 7.311           & 5.536                  & 3.322                  & 6.768e-01                \\ \hline
				3.660e-01 & 7.427           & 6.394                  & 3.279                  & 4.295e-01                \\ \hline
				1.685e-01 & 7.920           & 5.949                  & 3.402                  & 3.508e-01                \\ \hline
				8.309e-02 & 7.879           & 5.967                  & 3.383                  & 2.798e-01                \\ \hline
				4.084e-02 & 8.267           & 6.870                  & 3.510                  & 2.089e-01                \\ \hline
				2.041e-02 & 8.655           & 6.584                  & 3.585                  & 1.669e-01                \\ \hline
				1.014e-02 & 8.356           & 6.854                  & 3.589                  & 1.426e-01                \\ \hline
			\end{tabular}
			\par\end{centering}
		\caption{The regularity parameters \eqref{def:regh}, \eqref{regfac:cons},
			\eqref{def:varrho.h}and \eqref{def:reg.gamma} for a non-uniform mesh of the cube. \label{tab:cube_ref_param}}
	\end{table}
	
	The first experiment, whose results are presented in Table \ref{tab:Cube-constV}, shows the convergence of the method for a constant
	vector field $\vec{V}(\vec{x})=(-1,-1,-1)$. The method \eqref{eq:scheme} displays a first order convergence in $L^2$ and energy norms, which confirms the theoretical prediction of Theorem  \ref{th:error.estimate} and Proposition \ref{prop.fluxes}. The absence of super-convergence in $L^2$ norm is not surprising, as specific Finite Volume methods are only known to super-converge under certain geometric conditions, and to fail to super-converge in some cases \cite{DN17}.
	The rates of convergence for the upwind method \eqref{eq:upwind_scheme} are around $1$ in $L^2$ norm but tend to $1/2$ in $V_{h,\Omega}$ norm, which is expected (see Remark \ref{rem:analysis.upwind}). The splitting method \eqref{eq:split_scheme} shows the best convergence rates:  above second order in $L^2$ norm, and above first order in $V_{h,\Omega}$. 
	
	The second experiment considers the non-constant vector field $\vec{V}(x,y,z)=(x,y,-1)$.
	In this case the surface divergence of $\vec{W}(x,y,z)=(x,y,0)$ (see \eqref{eq:defV}) is $\DIVG\vec{W}(\vec{x})=2$.
	The tests show similar orders of convergence, albeit slightly reduced, as in the experiment
	with a constant vector field; see Table \ref{tab:Cube-divV}. The slight degradation could stem from the fact that the assumption \eqref{assum:coer.Omega} is not fully satisfied on these meshes and with this vector field, or that the asymptotic rate has not been achieved at these mesh sizes.
	
	In the third experiment on the cube we consider a divergence free rotational vector field $\vec{V}(x,y,z)=(-x,z,-1)$. The results in Table \ref{tab:Cube-rotV} show that, here again, the schemes behave in a similar way as with the other two vector fields.
	
	\begin{figure}
		\begin{centering}
			\includegraphics[width=0.6\textwidth,clip]{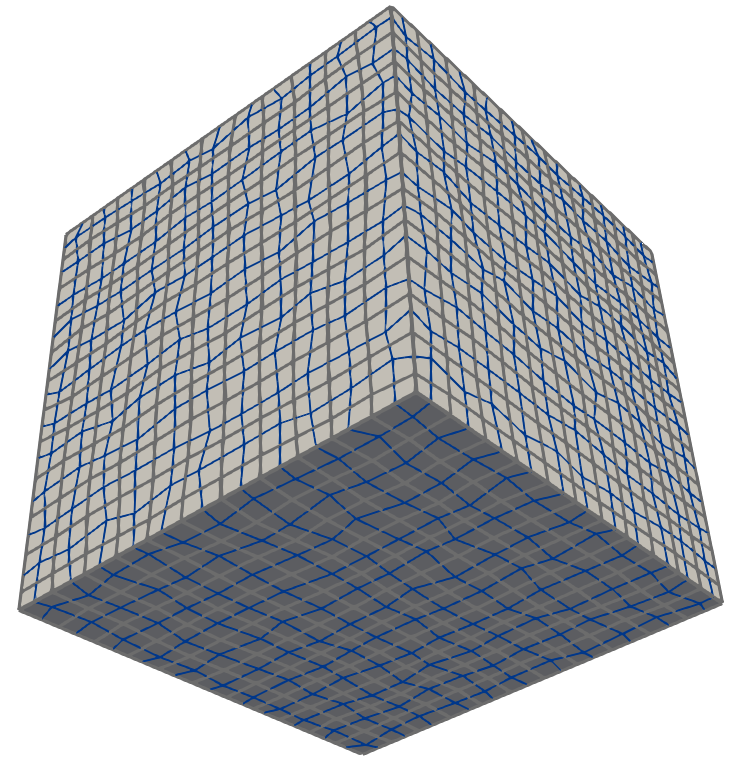}
			\par\end{centering}
		\caption{Non-uniform mesh of the cube}
	\end{figure}

	\begin{table}
		\begin{centering}
			\begin{tabular}{|c||c|c||c|c||c|c||c|c|}
				\hline
				\multicolumn{9}{|c|}{Scheme \eqref{eq:scheme}}\\
				\hline 
				$h$ & $L^2_{\Omega}$ error & EOC & $L^2_{\Gamma}$ error  & EOC & $V_h$ error & EOC & $V_{h,\Gamma}$ error & EOC\\
				\hline 
				\hline
				8.511e-01 & 2.092e-02            &   & 4.600e-02            &   & 1.886e-01   &   & 1.844e-01            &   \\ \hline
				3.660e-01 & 9.412e-03            & 0.946 & 2.634e-02            & 0.660 & 1.022e-01   & 0.726 & 1.462e-01            & 0.275 \\ \hline
				1.685e-01 & 3.922e-03            & 1.128 & 1.270e-02            & 0.940 & 4.767e-02   & 0.982 & 9.298e-02            & 0.583 \\ \hline
				8.309e-02 & 1.958e-03            & 0.982 & 6.756e-03            & 0.893 & 2.475e-02   & 0.927 & 6.358e-02            & 0.537 \\ \hline
				4.084e-02 & 1.003e-03            & 0.942 & 3.570e-03            & 0.898 & 1.294e-02   & 0.913 & 4.248e-02            & 0.567 \\ \hline
				2.041e-02 & 5.155e-04            & 0.959 & 1.867e-03            & 0.934 & 6.661e-03   & 0.957 & 2.678e-02            & 0.665 \\ \hline
				1.014e-02 & 2.560e-04            & 1.000 & 9.360e-04            & 0.986 & 3.287e-03   & 1.009 & 1.587e-02            & 0.747 \\ \hline
			\end{tabular}
\\[.5em]
			\begin{tabular}{|c||c|c||c|c||c|c|}
				\hline
				\multicolumn{7}{|c|}{Scheme \eqref{eq:upwind_scheme}}\\
				\hline 
				$h$ & $L^{2}_{\Omega}$ error & EOC & $L^{2}_{\Gamma}$ error & EOC & $V_{h,\Omega}$ error & EOC\\
				\hline 
				\hline 
				8.491e-01 & 3.224e-02              &   & 7.101e-02              &   & 1.409e-01            &   \\ \hline
				3.683e-01 & 9.117e-03              & 1.512 & 2.530e-02              & 1.236 & 6.281e-02            & 0.967 \\ \hline
				1.688e-01 & 2.972e-03              & 1.436 & 9.999e-03              & 1.190 & 3.070e-02            & 0.917 \\ \hline
				8.303e-02 & 1.237e-03              & 1.236 & 4.555e-03              & 1.108 & 1.898e-02            & 0.677 \\ \hline
				4.192e-02 & 5.513e-04              & 1.182 & 2.147e-03              & 1.101 & 1.260e-02            & 0.599 \\ \hline
				2.045e-02 & 2.579e-04              & 1.058 & 1.036e-03              & 1.014 & 8.562e-03            & 0.538 \\ \hline
				1.018e-02 & 1.233e-04              & 1.058 & 5.075e-04              & 1.023 & 6.030e-03            & 0.502 \\ \hline
			\end{tabular}
\\[.5em]
\begin{tabular}{|c||c|c||c|c||c|c|}
				\hline
				\multicolumn{7}{|c|}{Scheme \eqref{eq:split_scheme}}\\
				\hline 
				$h$ & $L^{2}_{\Omega}$ error & EOC & $L^{2}_{\Gamma}$ error & EOC & $V_{h,\Omega}$ error & EOC\\
				\hline 
				\hline 
				8.659e-01 & 2.154e-02              &   & 3.991e-02              &   & 8.012e-02            &   \\ \hline
				3.692e-01 & 4.645e-03              & 1.800 & 1.186e-02              & 1.424 & 3.579e-02            & 0.945 \\ \hline
				1.691e-01 & 6.920e-04              & 2.437 & 2.228e-03              & 2.140 & 1.187e-02            & 1.413 \\ \hline
				8.246e-02 & 1.606e-04              & 2.035 & 5.954e-04              & 1.838 & 4.569e-03            & 1.330 \\ \hline
				4.066e-02 & 3.457e-05              & 2.172 & 1.353e-04              & 2.096 & 1.836e-03            & 1.290 \\ \hline
				2.046e-02 & 8.271e-06              & 2.083 & 3.127e-05              & 2.133 & 7.615e-04            & 1.282 \\ \hline
				1.029e-02 & 2.035e-06              & 2.039 & 7.885e-06              & 2.004 & 3.502e-04            & 1.130 \\ \hline
			\end{tabular}

	\end{centering}
		\caption{EOC for the non-uniform mesh of the cube with $\boldsymbol{V}(\boldsymbol{x})=(-1,-1,-1)$\label{tab:Cube-constV}}
	\end{table}

	\begin{table}
		\begin{centering}
			\begin{tabular}{|c||c|c||c|c||c|c||c|c|}
				\hline
				\multicolumn{9}{|c|}{Scheme \eqref{eq:scheme}}\\
				\hline 
				$h$ & $L^{2}_{\Omega}$ error & EOC & $L^{2}_{\Gamma}$ error & EOC & $V_{h}$ error & EOC & $V_{h,\Gamma}$ error & EOC\\
				\hline 
				\hline 
				8.478e-01 & 1.508e-02              &   & 2.908e-02              &   & 1.066e-01     &   & 1.021e-01             &          \\ \hline
				3.606e-01 & 9.891e-03              & 0.493 & 2.639e-02              & 0.113 & 8.779e-02     & 0.226 & 1.208e-01             & -0.197422 \\ \hline
				1.694e-01 & 5.583e-03              & 0.757 & 1.671e-02              & 0.605 & 5.182e-02     & 0.697 & 9.645e-02            & 0.298902  \\ \hline
				8.197e-02 & 3.237e-03              & 0.751 & 1.001e-02              & 0.705 & 2.929e-02     & 0.786 & 7.008e-02            & 0.439947  \\ \hline
				4.068e-02 & 1.803e-03              & 0.835 & 5.652e-03              & 0.815 & 1.571e-02     & 0.889 & 4.541e-02            & 0.619154  \\ \hline
				2.032e-02 & 9.617e-04              & 0.905 & 3.036e-03              & 0.895 & 8.098e-03     & 0.954 & 2.699e-02            & 0.749373  \\ \hline
				1.031e-02 & 5.041e-04              & 0.951 & 1.598e-03              & 0.945 & 4.148e-03     & 0.985 & 1.534e-02             & 0.832171  \\ \hline
			\end{tabular}
\\[.5em]
\begin{tabular}{|c||c|c||c|c||c|c|}
				\hline
				\multicolumn{7}{|c|}{Scheme \eqref{eq:upwind_scheme}}\\
				\hline 
				$h$ & $L^{2}_{\Omega}$ error & EOC & $L^{2}_{\Gamma}$ error & EOC & $V_{h,\Omega}$ error & EOC\\
				\hline 
				\hline 
				8.700e-01 & 3.336e-02              &   & 7.199e-02              &   & 1.388e-01            &   \\ \hline
				3.639e-01 & 8.894e-03              & 1.517 & 2.369e-02              & 1.275 & 5.440e-02            & 1.075 \\ \hline
				1.684e-01 & 3.048e-03              & 1.390 & 9.325e-03              & 1.210 & 2.413e-02            & 1.055 \\ \hline
				8.341e-02 & 1.390e-03              & 1.118 & 4.350e-03              & 1.085 & 1.256e-02            & 0.929 \\ \hline
				4.114e-02 & 6.835e-04              & 1.004 & 2.169e-03              & 0.984 & 7.078e-03            & 0.811 \\ \hline
				2.055e-02 & 3.410e-04              & 1.002 & 1.085e-03              & 0.997 & 4.233e-03            & 0.740 \\ \hline
				1.015e-02 & 1.698e-04              & 0.987 & 5.414e-04              & 0.985 & 2.700e-03            & 0.637 \\ \hline
			\end{tabular}
\\[.5em]
\begin{tabular}{|c||c|c||c|c||c|c|}
				\hline
				\multicolumn{7}{|c|}{Scheme \eqref{eq:split_scheme}}\\
				\hline 
				$h$ & $L^{2}_{\Omega}$ error & EOC & $L^{2}_{\Gamma}$ error & EOC & $V_{h,\Omega}$ error & EOC\\
				\hline 
				\hline 
				8.455e-01 & 1.617e-02              &   & 3.542e-02              &   & 7.634e-02            &   \\ \hline
				3.630e-01 & 2.437e-03              & 2.237 & 6.801e-03              & 1.952 & 1.956e-02            & 1.610 \\ \hline
				1.722e-01 & 7.058e-04              & 1.662 & 1.880e-03              & 1.725 & 8.495e-03            & 1.119 \\ \hline
				8.220e-02 & 1.720e-04              & 1.908 & 4.538e-04              & 1.921 & 3.432e-03            & 1.225 \\ \hline
				4.063e-02 & 3.849e-05              & 2.125 & 1.034e-04              & 2.099 & 1.455e-03            & 1.217 \\ \hline
				2.032e-02 & 9.436e-06              & 2.029 & 2.561e-05              & 2.014 & 6.904e-04            & 1.076 \\ \hline
				1.022e-02 & 2.326e-06              & 2.039 & 6.338e-06              & 2.033 & 3.302e-04            & 1.074 \\ \hline
			\end{tabular}
			\par\end{centering}
		\caption{EOC for the non-uniform mesh of the cube with $\boldsymbol{V}(x,y,z)=(x,y,-1)$\label{tab:Cube-divV}}
	\end{table}

	\begin{table}
		\begin{centering}
			\begin{tabular}{|c||c|c||c|c||c|c||c|c|}
				\hline
				\multicolumn{9}{|c|}{Scheme \eqref{eq:scheme}}\\
				\hline 
				$h$ & $L^{2}_{\Omega}$ error & EOC & $L^{2}_{\Gamma}$ error & EOC & $V_{h}$ error & EOC & $V_{h,\Gamma}$ error & EOC\\
				\hline 
				\hline 
				8.594e-01 & 1.402e-02              &   & 3.084e-02              &   & 1.103e-01     &   & 1.070e-01            &    \\ \hline
				3.651e-01 & 7.731e-03              & 0.695 & 2.187e-02              & 0.401 & 7.795e-02     & 0.405 & 1.084e-01            & -0.015 \\ \hline
				1.703e-01 & 4.769e-03              & 0.633 & 1.455e-02              & 0.534 & 4.839e-02     & 0.625 & 9.136e-02            & 0.224  \\ \hline
				8.271e-02 & 2.620e-03              & 0.828 & 8.360e-03              & 0.767 & 2.636e-02     & 0.840 & 6.458e-02            & 0.480  \\ \hline
				4.070e-02 & 1.389e-03              & 0.894 & 4.520e-03              & 0.867 & 1.347e-02     & 0.947 & 4.045e-02            & 0.659  \\ \hline
				2.037e-02 & 7.382e-04              & 0.913 & 2.428e-03              & 0.897 & 6.985e-03     & 0.948 & 2.454e-02            & 0.721  \\ \hline
				1.018e-02 & 3.768e-04              & 0.969 & 1.248e-03              & 0.960 & 3.490e-03     & 1.000 & 1.378e-02            & 0.832  \\ \hline
				
			\end{tabular}
\\[0.5em]
			\begin{tabular}{|c||c|c||c|c||c|c|}
				\hline
				\multicolumn{7}{|c|}{Scheme \eqref{eq:upwind_scheme}}\\
				\hline 
				$h$ & $L^{2}_{\Omega}$ error & EOC & $L^{2}_{\Gamma}$ error & EOC & $V_{h,\Omega}$ error & EOC\\
				\hline 
				\hline 
				8.524e-01 & 1.366e-02              &   & 1.930e-02              &   & 6.906e-02            &   \\ \hline
				3.670e-01 & 2.301e-03              & 2.114 & 6.247e-03              & 1.339 & 2.381e-02            & 1.264 \\ \hline
				1.679e-01 & 9.552e-04              & 1.124 & 3.374e-03              & 0.787 & 1.730e-02            & 0.408 \\ \hline
				8.357e-02 & 3.550e-04              & 1.419 & 1.483e-03              & 1.178 & 8.874e-03            & 0.957 \\ \hline
				4.078e-02 & 1.550e-04              & 1.155 & 6.917e-04              & 1.063 & 5.654e-03            & 0.628 \\ \hline
				2.026e-02 & 7.428e-05              & 1.051 & 3.290e-04              & 1.062 & 3.727e-03            & 0.595 \\ \hline
				1.035e-02 & 3.648e-05              & 1.059 & 1.585e-04              & 1.088 & 2.510e-03            & 0.588 \\ \hline
			\end{tabular}
\\[0.5em]
\begin{tabular}{|c||c|c||c|c||c|c|}
				\hline
				\multicolumn{7}{|c|}{Scheme \eqref{eq:split_scheme}}\\
				\hline 
				$h$ & $L^{2}_{\Omega}$ error & EOC & $L^{2}_{\Gamma}$ error & EOC & $V_{h,\Omega}$ error & EOC\\
				\hline 
				\hline 
				8.668e-01 & 8.610e-03              &   & 1.747e-02              &   & 4.397e-02            &   \\ \hline
				3.625e-01 & 1.853e-03              & 1.763 & 3.020e-03              & 2.013 & 1.659e-02            & 1.118 \\ \hline
				1.689e-01 & 5.089e-04              & 1.692 & 9.059e-04              & 1.576 & 7.446e-03            & 1.049 \\ \hline
				8.294e-02 & 1.174e-04              & 2.063 & 2.719e-04              & 1.693 & 3.431e-03            & 1.090 \\ \hline
				4.070e-02 & 2.681e-05              & 2.075 & 6.039e-05              & 2.113 & 1.458e-03            & 1.202 \\ \hline
				2.076e-02 & 6.741e-06              & 2.051 & 1.415e-05              & 2.155 & 6.850e-04            & 1.122 \\ \hline
				1.023e-02 & 1.644e-06              & 1.994 & 3.515e-06              & 1.968 & 3.287e-04            & 1.037 \\ \hline
			\end{tabular}
			\par\end{centering}
		\caption{EOC for the non-uniform mesh of the cube with $\boldsymbol{V}(x,y,z)=(-x,z,-1)$\label{tab:Cube-rotV}}
	\end{table}

	\subsubsection{Tesseroid domain with non-planar $\Gamma$}\label{sec:tesseroid}
	
	The next experiments are run on a computational domain with a non-planar
	boundary $\Gamma$. The discrete computational domain then does not exactly match $\Omega$, and vertices of the boundary faces do not have to lie on the boundary $\Gamma$. Moreover, in this construction, the tangent space
	to $\Gamma$ is not well defined everywhere so the co-normal $\vec{n}_{\sigma,e}$
	is not well defined either in the Eq. \eqref{eq:scheme}. In this case we approximate the normal vector
$\vec{n}_{\sigma,e}$ by the normalised version of $\left(\frac{\vec{N}_{p}+\vec{N}_{q}}{2}\right)\times\vec{e}$,
	where the vector $\vec{N}_{p}$ is a normal to the face $\sigma$,
	the vector $\vec{N}_{q}$ is normal to the neighbouring face on the other side of $e$,
	and the vector $\vec{e}$ is a tangent vector to the edge $e$, chosen
	such that $\vec{n}_{\sigma,e}$ is an outward normal to $\sigma$.
	
	The experiments are performed on a non-uniform mesh of the tesseroid 
	\[
	\Omega=\left\{(r\sin(u)\cos(v), r\sin(u)\sin(v), r\cos(u))\,:\,r\in(1,2)\,,\; u\in\left(\frac{3\pi}{8},\frac{5\pi}{8}\right)\,,\;v\in\left(0,\frac{\pi}{4}\right)\right\}.
	\]
	See Fig. \ref{fig:nonunif.tess} for an illustration. The oblique boundary condition is prescribed on the non-planar face corresponding to $r=1$:
	\[
	\Gamma=\left\{(\sin(u)\cos(v),\sin(u)\sin(v),\cos(u))\,:\,u\in\left(\frac{3\pi}{8},\frac{5\pi}{8}\right)\,,\;
	v\in\left(0,\frac{\pi}{4}\right)\right\}.
	\]
	The regularity parameters of the considered meshes are presented in
	Table \ref{tab:sph_ref_param}. As can be seen there, the regularity factor $\varrho_{\mesh,\Omega}$ seems to degenerate as the mesh size is reduced, indicating that the condition that ensure the coercivity of the inner fluxes (see Proposition \ref{prop.fluxes}) might not hold -- which does not necessarily mean that the scheme actually fails to be coercive or to converge, since this is only a \emph{sufficient} condition.
	\begin{table}
		\begin{centering}
			\begin{tabular}{|c|c|c|c|c|}
				\hline 
				$h$ & ${\rm reg}_{\mesh}$ & ${\rm reg}_{\mesh,\Omega}$ & ${\rm reg}_{\mesh,\Gamma}$ & $\varrho_{\mesh,\Omega}$\\
				\hline 
				\hline 
				1.092 & 2.538e+01 & 5.468 & 3.296 & 6.074e-01 \\ \hline
				4.981e-01 & 1.336e+01 & 5.970 & 3.468 & 4.051e-01 \\ \hline
				2.375e-01 & 1.045e+01 & 6.228 & 3.297 & 3.691e-01 \\ \hline
				1.158e-01 & 1.017e+01 & 6.352 & 3.442 & 2.463e-01 \\ \hline
				5.733e-02 & 0.998e+01 & 6.324 & 3.426 & 1.460e-01 \\ \hline
				2.847e-02 & 1.029e+01 & 6.582 & 3.598 & 1.392e-01 \\ \hline
				1.454e-02 & 1.032e+01 & 6.761 & 3.580 & 8.204e-02 \\ \hline
			\end{tabular}
			\par\end{centering}
		\caption{The regularity parameters \eqref{def:regh}, \eqref{regfac:cons},
			\eqref{def:varrho.h} and \eqref{def:reg.gamma} for a non-uniform mesh of a tesseroid  mesh.
			\label{tab:sph_ref_param}}
	\end{table}
	The tests present the convergence of the methods for a non-constant
	vector field $\boldsymbol{V}(\boldsymbol{x})=(0.3,0.2,0.1)-\boldsymbol{x}$.
	The results presented in Table \ref{tab:Sph-rotV-2} are similar to the ones obtained in Section \ref{sec:cubic.domain}, with perhaps slightly better rates of convergence across the board. In any case, the apparent decay of the regularity factor $\varrho_{\mesh,\Omega}$ does not seem to negatively impact the convergence of the schemes.
	
	\begin{figure}
		\begin{centering}
			\includegraphics[width=0.7\textwidth,clip]{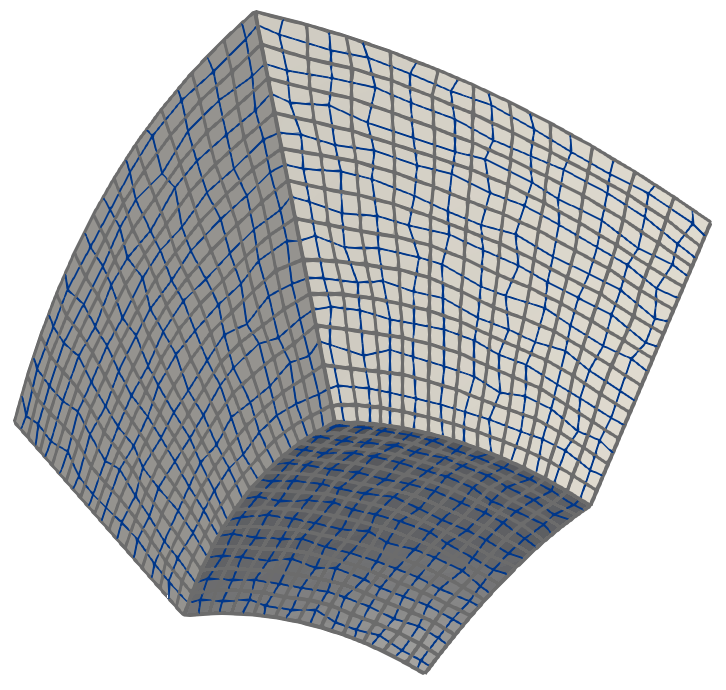}
			\par\end{centering}
		\caption{Non-uniform mesh of a tesseroid}
		\label{fig:nonunif.tess}
	\end{figure}
	
	\begin{table}
		\begin{centering}
			\begin{tabular}{|c||c|c||c|c||c|c||c|c|}
				\hline
				\multicolumn{9}{|c|}{Scheme \eqref{eq:scheme}}\\
				\hline 
				$h$ & $L^{2}_{\Omega}$ error & EOC & $L^{2}_{\Gamma}$ error & EOC & $V_{h}$ error & EOC & $V_{h,\Gamma}$ error & EOC\\
				\hline 
				\hline 
				1.092 & 7.054e-03              &   & 9.644e-03              &   & 3.776e-02     &   & 3.620e-02            &   \\ \hline
				4.981e-01 & 1.307e-03              & 2.148 & 2.189e-03              & 1.890 & 9.516e-03     & 1.756 & 1.205e-02            & 1.402 \\ \hline
				2.375e-01 & 3.656e-04              & 1.720 & 7.681e-04              & 1.414 & 3.249e-03     & 1.451 & 5.267e-03            & 1.117 \\ \hline
				1.158e-01 & 1.294e-04              & 1.446 & 3.335e-04              & 1.161 & 1.322e-03     & 1.252 & 2.698e-03            & 0.931 \\ \hline
				5.733e-02 & 5.493e-05              & 1.219 & 1.578e-04              & 1.065 & 5.788e-04     & 1.175 & 1.400e-03            & 0.932 \\ \hline
				2.847e-02 & 2.518e-05              & 1.114 & 7.580e-05              & 1.047 & 2.643e-04     & 1.119 & 7.200e-04            & 0.950 \\ \hline
				1.454e-02 & 1.248e-05              & 1.044 & 3.843e-05              & 1.011 & 1.285e-04     & 1.073 & 3.805e-04            & 0.949 \\ \hline
			\end{tabular}
\\[.5em]
\begin{tabular}{|c||c|c||c|c||c|c|}
				\hline
				\multicolumn{7}{|c|}{Scheme \eqref{eq:upwind_scheme}}\\
				\hline 
				$h$ & $L^{2}_{\Omega}$ error & EOC & $L^{2}_{\Gamma}$ error & EOC & $V_{h}$ error & EOC\\
				\hline 
				\hline 
				1.075 & 5.920e-03              &   & 5.346e-03              &   & 2.111e-02     &   \\ \hline
				4.783e-01 & 1.184e-03              & 1.989 & 1.474e-03              & 1.592 & 6.483e-03     & 1.458 \\ \hline
				2.326e-01 & 2.935e-04              & 1.934 & 5.223e-04              & 1.439 & 2.464e-03     & 1.342 \\ \hline
				1.156e-01 & 9.156e-05              & 1.665 & 2.141e-04              & 1.275 & 1.055e-03     & 1.213 \\ \hline
				5.766e-02 & 3.433e-05              & 1.411 & 9.374e-05              & 1.188 & 4.987e-04     & 1.077 \\ \hline
				2.826e-02 & 1.458e-05              & 1.201 & 4.358e-05              & 1.074 & 2.661e-04     & 0.880 \\ \hline
				1.435e-02 & 6.738e-06              & 1.139 & 2.099e-05              & 1.078 & 1.569e-04     & 0.779 \\ \hline
			\end{tabular}
\\[.5em]
\begin{tabular}{|c||c|c||c|c||c|c|}
				\hline
				\multicolumn{7}{|c|}{Scheme \eqref{eq:split_scheme}}\\
				\hline 
				$h$ & $L^{2}_{\Omega}$ error & EOC & $L^{2}_{\Gamma}$ error & EOC & $V_{h}$ error & EOC\\
				\hline 
				\hline 
				1.073 & 5.384e-03              &   & 5.121e-03              &   & 1.800e-02     &   \\ \hline
				4.950e-01 & 1.073e-03              & 2.084 & 1.114e-03              & 1.970 & 6.262e-03     & 1.364 \\ \hline
				2.362e-01 & 2.165e-04              & 2.162 & 2.155e-04              & 2.220 & 2.093e-03     & 1.481 \\ \hline
				1.155e-01 & 5.076e-05              & 2.028 & 4.964e-05              & 2.053 & 8.340e-04     & 1.286 \\ \hline
				5.767e-02 & 1.247e-05              & 2.022 & 1.277e-05              & 1.954 & 3.506e-04     & 1.248 \\ \hline
				2.845e-02 & 3.046e-06              & 1.994 & 3.107e-06              & 2.001 & 1.574e-04     & 1.133 \\ \hline
				1.430e-02 & 7.544e-07              & 2.029 & 7.650e-07              & 2.038 & 7.386e-05     & 1.100 \\ \hline
			\end{tabular}
			\par\end{centering}
		\caption{EOC for a non-uniform mesh of a tesseroid with $\boldsymbol{V}(\boldsymbol{x})=(0.3,0.2,0.1)-\boldsymbol{x}$\label{tab:Sph-rotV-2}}
	\end{table}
	
	\subsubsection{Spherical section domain with perturbed bottom $\Gamma$}
	
	This series of experiments is performed on a section of a spherical domain,
	with a perturbed bottom boundary $\Gamma$ (see Fig. \ref{fig:pert.ball}):
	\[
	\begin{aligned}
	\Omega=\Big\{{}&\big[(1+0.04(2 - r)(\sin(10\ u) + \sin(10\ v)))\sin(u)\cos(v),\\
	&\,(1+0.04(2 - r)(\sin(10\ u) + \sin(10\ v)))\sin(u)\sin(v),\\
	&\,(1+0.04(2 - r)(\sin(10\ u) + \sin(10\ v)))\cos(u)\big]\,:\\
	&\qquad\qquad\qquad\qquad\qquad r\in(1,2)\,,\;u\in\left(\frac{3\pi}{8},\frac{5\pi}{8}\right)\,,\;v\in\left(0,\frac{\pi}{4}\right)\Big\},\\
	\Gamma=\Big\{{}&\big[(1+0.04(\sin(10\ u) + \sin(10\ v)))\sin(u)\cos(v),\\
	&\,(1+0.04(\sin(10\ u) + \sin(10\ v)))\sin(u)\sin(v),\\
	&\,(1+0.04(\sin(10\ u) + \sin(10\ v)))\cos(u)\big]\,:\\
	&\qquad\qquad\qquad\qquad\qquad\qquad u\in\left(\frac{3\pi}{8},\frac{5\pi}{8}\right)\,,\;v\in\left(0,\frac{\pi}{4}\right)\Big\}.
	\end{aligned}
	\]
	The regularity parameters for the considered meshes are presented in
	Table \ref{tab:EarthLike_ref_param}. The coercivity constant $\varrho_{\mesh,\Omega}$ is worse as in the tesseroid case, as it becomes negative. However, once again, since a lower bound on this constant is only a sufficient condition for the theoretical analysis, this does not mean that the schemes fail to converge, as the numerical results will show. We take the non-constant vector
	field $\boldsymbol{V}(\boldsymbol{x})=(0.3,0.2,0.1)-\boldsymbol{x}$.
	Table \ref{tab:rotV-1} shows that all three schemes behave in a similar way as in the previous tests of Sections \ref{sec:cubic.domain} and \ref{sec:tesseroid}. This indicates that our coercivity analysis (based on $\varrho_{\mesh,\Omega}$) is actually a bit too conservative regarding the robustness range of the discretisations.
	
	\begin{table}
		\begin{centering}
			\begin{tabular}{|c|c|c|c|c|}
				\hline 
				$h$ & ${\rm reg}_{\mesh}$ & ${\rm reg}_{\mesh,\Omega}$ & ${\rm reg}_{\mesh,\Gamma}$ & $\varrho_{\mesh,\Omega}$\\
				\hline 
				\hline 
				1.089 & 3.228e+01           & 5.017                  & 3.571                & 3.068e-01                  \\ \hline
				4.928e-01 & 1.896e+01           & 6.270                  & 3.551                & -6.303e-01                 \\ \hline
				2.299e-01 & 1.207e+01           & 6.259                  & 3.777                & -9.516e-01                 \\ \hline
				1.159e-01 & 1.120e+01           & 6.819                  & 3.838                & -1.138                 \\ \hline
				5.813e-02 & 1.083e+01           & 7.246                  & 3.904                & -1.524                 \\ \hline
				2.867e-02 & 1.114e+01           & 6.890                  & 4.253                & -1.403                 \\ \hline
				1.462e-02 & 1.118e+01           & 6.914                  & 4.232                & -1.642                 \\ \hline
				
			\end{tabular}
			\par\end{centering}
		\caption{The regularity parameters \eqref{def:regh}, \eqref{regfac:cons},
			\eqref{def:varrho.h} and \eqref{def:reg.gamma} for a non-uniform mesh of a section of a perturbed ball. \label{tab:EarthLike_ref_param}}
	\end{table}
	
	\begin{figure}
		\begin{centering}
			\includegraphics[width=0.7\textwidth,clip]{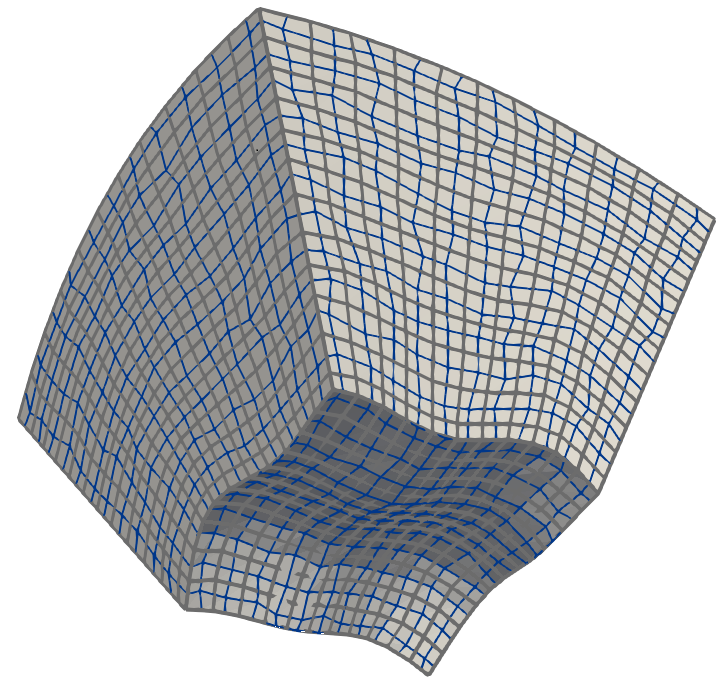}
			\par\end{centering}
		\caption{Section of a perturbed ball}
		\label{fig:pert.ball}
	\end{figure}
	\begin{table}
		\begin{centering}
			\begin{tabular}{|c||c|c||c|c||c|c||c|c|}
				\hline
				\multicolumn{9}{|c|}{Scheme \eqref{eq:scheme}}\\
				\hline 
				$h$ & $L^{2}_{\Omega}$ error & EOC & $L^{2}_{\Gamma}$ error & EOC & $V_{h}$ error & EOC & $V_{h,\Gamma}$ error & EOC\\
				\hline 
				\hline 
				1.089 & 7.577e-03              &   & 1.212e-02              &   & 5.851e-02     &   & 6.169e-02            &    \\ \hline
				4.928e-01 & 2.595e-03              & 1.351 & 6.735e-03              & 0.740 & 4.232e-02     & 0.408 & 7.120e-02            & -0.180 \\ \hline
				2.299e-01 & 1.519e-03              & 0.7021 & 5.123e-03              & 0.358 & 2.766e-02     & 0.557 & 6.351e-02            & 0.150  \\ \hline
				1.159e-01 & 8.678e-04              & 0.817 & 3.163e-03              & 0.704 & 1.465e-02     & 0.928 & 4.295e-02            & 0.571  \\ \hline
				5.813e-02 & 4.763e-04              & 0.868 & 1.776e-03              & 0.835 & 7.250e-03     & 1.019 & 2.550e-02            & 0.754  \\ \hline
				2.867e-02 & 2.548e-04              & 0.885 & 9.569e-04              & 0.875 & 3.594e-03     & 0.993 & 1.433e-02            & 0.815  \\ \hline
				1.462e-02 & 1.335e-04              & 0.959 & 5.028e-04              & 0.955 & 1.793e-03     & 1.032 & 7.777e-03            & 0.907  \\ \hline
				
			\end{tabular}
\\[0.5em]
\begin{tabular}{|c||c|c||c|c||c|c|}
				\hline
				\multicolumn{7}{|c|}{Scheme \eqref{eq:upwind_scheme}}\\
				\hline 
				$h$ & $L^{2}_{\Omega}$ error & EOC & $L^{2}_{\Gamma}$ error & EOC & $V_{h}$ error & EOC\\
				\hline 
				\hline 
				1.120 & 6.715e-03              &   & 6.435e-03              &   & 2.482e-02     &   \\ \hline
				4.881e-01 & 1.643e-03              & 1.695 & 3.335e-03              & 0.791 & 1.269e-02     & 0.807 \\ \hline
				2.329e-01 & 7.427e-04              & 1.073 & 2.294e-03              & 0.505 & 8.068e-03     & 0.612 \\ \hline
				1.142e-01 & 3.565e-04              & 1.030 & 1.256e-03              & 0.845 & 4.444e-03     & 0.836 \\ \hline
				5.749e-02 & 1.717e-04              & 1.065 & 6.353e-04              & 0.993 & 2.336e-03     & 0.936 \\ \hline
				2.890e-02 & 8.426e-05              & 1.035 & 3.175e-04              & 1.008 & 1.245e-03     & 0.915 \\ \hline
				1.437e-02 & 4.168e-05              & 1.008 & 1.582e-04              & 0.997 & 6.884e-04     & 0.848 \\ \hline
			\end{tabular}
\\[0.5em]
\begin{tabular}{|c||c|c||c|c||c|c|}
				\hline
				\multicolumn{7}{|c|}{Scheme \eqref{eq:split_scheme}}\\
				\hline 
				$h$ & $L^{2}_{\Omega}$ error & EOC & $L^{2}_{\Gamma}$ error & EOC & $V_{h}$ error & EOC\\
				\hline 
				\hline 
				1.086 & 6.387e-03              &   & 5.968e-03              &   & 2.235e-02     &   \\ \hline
				4.943e-01 & 1.229e-03              & 2.093 & 1.479e-03              & 1.772 & 7.613e-03     & 1.368 \\ \hline
				2.332e-01 & 2.756e-04              & 1.991 & 3.978e-04              & 1.749 & 2.536e-03     & 1.464 \\ \hline
				1.165e-01 & 6.656e-05              & 2.047 & 1.067e-04              & 1.895 & 9.496e-04     & 1.415 \\ \hline
				5.864e-02 & 1.597e-05              & 2.079 & 2.622e-05              & 2.045 & 3.870e-04     & 1.307 \\ \hline
				2.925e-02 & 3.894e-06              & 2.028 & 6.502e-06              & 2.005 & 1.739e-04     & 1.150 \\ \hline
				1.441e-02 & 9.641e-07              & 1.972 & 1.616e-06              & 1.967 & 8.209e-05     & 1.060 \\ \hline
			\end{tabular}
			\par\end{centering}
		\caption{EOC for $\Omega$ the section of a perturbed ball. $\boldsymbol{V}(\boldsymbol{x})=(0.3,0.2,0.1)-\boldsymbol{x}$\label{tab:rotV-1}}
	\end{table}
	
	\subsubsection{Cubic domain, almost tangential vector field $\vec{V}$}
	
	In this final series of numerical experiments with an analytical solution, we show the advantage of the proposed scheme \eqref{eq:scheme} and of the upwind scheme \eqref{eq:upwind_scheme} over the splitting scheme \eqref{eq:split_scheme}. The computational domain $\Omega$ is the unit cube, with $\Gamma$ being its bottom. We take the vector field $\vec{V} = (11.4301,0, -1)$, which corresponds to the outer normal on $\Gamma$ rotated by $85°$; this vector field is therefore almost tangential to the boundary. The tests are run on uniform meshes.
	
	Table \ref{tab:tangV-advDif} presents the EOC for the proposed scheme and upwind scheme.   The rates are sometimes degraded compared to the previous tests, but there is a clear convergence.
	
	For the splitting scheme \eqref{eq:split_scheme}, the fact that $\vec{V}$ is almost tangential leads to very large
	values of $\frac{\alpha_\sigma^\genexp}{\beta_\sigma}$ in \eqref{eq:split_flux_approx}. As a consequence, the negative coefficients in the coercivity factor are too large to be controlled by the positive coefficients; the scheme really becomes non-coercive and unstable, and the BiCGStab algorithm used to solve the system fails. This breakdown of a numerical method is probably the worst situation that one wants to avoid in practice, which indicates that in severely oblique situations the proposed new methods \eqref{eq:scheme} and \eqref{eq:upwind_scheme} should be preferred, despite yielding sometimes reduced rates of convergence.
	
	\begin{table}
		\begin{centering}
			\begin{tabular}{|c||c|c||c|c||c|c||c|c|}
				\hline
				\multicolumn{9}{|c|}{Scheme \eqref{eq:scheme}}\\
				\hline 
				$h$ & $L^{2}_{\Omega}$ error & EOC & $L^{2}_{\Gamma}$ error & EOC & $V_{h}$ error & EOC & $V_{h,\Gamma}$ error & EOC\\
				\hline 
				\hline 
				8.605e-01 & 2.247e-02              &   & 4.945e-02              &   & 1.686e-01     &   & 1.610e-01            &    \\ \hline
				3.606e-01 & 1.537e-02              & 0.436 & 4.232e-02              & 0.179 & 1.437e-01     & 0.184 & 2.014e-01            & -0.257 \\ \hline
				1.692e-01 & 1.088e-02              & 0.455 & 3.301e-02              & 0.328 & 1.060e-01     & 0.401 & 2.012e-01            & 0.001 \\ \hline
				8.347e-02 & 6.582e-03              & 0.712 & 2.109e-02              & 0.634 & 6.669e-02     & 0.655 & 1.647e-01            & 0.283  \\ \hline
				4.054e-02 & 3.756e-03              & 0.776 & 1.255e-02              & 0.718 & 4.010e-02     & 0.704 & 1.248e-01            & 0.384  \\ \hline
				2.043e-02 & 2.046e-03              & 0.886 & 7.078e-03              & 0.836 & 2.333e-02     & 0.790 & 9.148e-02            & 0.453  \\ \hline
				1.017e-02 & 1.100e-03              & 0.889 & 3.913e-03              & 0.848 & 1.342e-02     & 0.792 & 6.596e-02            & 0.468  \\ \hline
				
			\end{tabular}
\\[0.5em]
\begin{tabular}{|c||c|c||c|c||c|c|}
				\hline
				\multicolumn{7}{|c|}{Scheme \eqref{eq:upwind_scheme}}\\
				\hline 
				\hline 
				$h$ & $L^{2}_{\Omega}$ error & EOC & $L^{2}_{\Gamma}$ error & EOC & $V_{h}$ error & EOC\\
				\hline 
				\hline 
				8.638e-01 & 5.093e-02              &   & 1.084e-01              &   & 2.195e-01     &   \\ \hline
				3.664e-01 & 3.020e-02              & 0.609 & 8.669e-02              & 0.260 & 2.056e-01     & 0.760 \\ \hline
				1.685e-01 & 1.156e-02              & 1.236 & 4.137e-02              & 0.952 & 1.319e-01     & 0.571 \\ \hline
				8.343e-02 & 5.254e-03              & 1.122 & 2.077e-02              & 0.980 & 8.042e-02     & 0.703 \\ \hline
				4.077e-02 & 2.504e-03              & 1.035 & 1.045e-02              & 0.959 & 5.019e-02     & 0.658 \\ \hline
				2.044e-02 & 1.209e-03              & 1.055 & 5.121e-03              & 1.033 & 3.015e-02     & 0.738 \\ \hline
				1.022e-02 & 5.952e-04              & 1.022 & 2.529e-03              & 1.018 & 1.849e-02     & 0.705 \\ \hline
			\end{tabular}
			\par\end{centering}
		\caption{EOC for $\Omega$ a cube and $\boldsymbol{V}(\boldsymbol{x})=(11.4301,0, -1)$\label{tab:tangV-advDif}}
	\end{table}
	
	\subsection{Local gravity field modelling}
	
	\label{Loc-grav-field-mod}
	
	In this section we present local gravity field modelling over Slovakia using
	terrestrial gravity data. The goal of this experiment is to compute a disturbing potential using presented FVM schemes with oblique BC from terrestrial measurements 
	and Dirichlet BCs obtained from satellite based model. Then we transform obtained potential to quasi-geoidal heights and compare them with real measurements.
	On the upper and side
	boundaries, the GO\_CONS\_GCF\_2\_DIR\_R5 model \cite{Bruinsma2013} was
	used and on the bottom boundary we used
	the surface gravity disturbances obtained from the available regular
	grid of gravity anomalies, with the resolution $20''\times30''$,
	that was compiled from original gravimetric measurements \cite{Grand2001}.
	The gravity anomalies were transformed into the gravity disturbances
	by official digital vertical reference model DVRM (www.geoportal.sk).
	
	The domain was bounded by $\langle16{}^{\circ},23{}^{\circ}\rangle$
	meridians and $\langle47{}^{\circ},50.5{}^{\circ}\rangle$ parallels.
	The side boundaries were chosen sufficiently far from the area of Slovakia
	in order to mitigate an influence of the prescribed Dirichlet BC generated
	from the satellite-only geopotential model. For more details about
	this influence see \cite{Faskova2010}. The heights were interpolated
	from SRTM30 PLUS model and the upper boundary is in the height
	of 240km above the reference ellipsoid.
	
	Three experiments with the grid density $841\times631\times301$ were performed
	using the FVM schemes \eqref{eq:scheme}, \eqref{eq:upwind_scheme} and \eqref{eq:split_scheme}.
	The accuracy of the simulations was tested using GNSS-leveling. From the available dataset
	of 61 GNSS-leveling benchmarks, three evident outliers were removed.
	Hence, we tested the obtained local quasi-geoid model at 58 points. The results are summarised in Table \ref{TableSR}  and, for the method
	(19), they are visualized in Fig. \ref{fig:potential_slovakia}. We see a comparable precision of all the methods in this experiment. With this grid resolution the standard deviation of residuals between numerical results and measurements for all schemes is around 5cm.

	\begin{table}		
		\centering{}%
		\begin{tabular}{|c|c|c|c|}
			\hline 
			   & method \ref{eq:scheme} & method \ref{eq:upwind_scheme} & method \ref{eq:split_scheme} \\
			\hline 
			Min  & 0.229 & 0.237 & 0.239 \\
			\hline 
			Mean  & 0.326  & 0.330 & 0.331 \\
			\hline 
			Max  & 0.449 & 0.458 & 0.459 \\
			\hline 
			Range  & 0.22 & 0.221 & 0.220 \\
			\hline 
			STD & 0.052  & 0.050  & 0.050 \\
			\hline 
		\end{tabular}
	\caption{The GNSS-leveling test $[m]$ at 58 points in area of Slovakia.\label{TableSR}}
	\end{table}

	\begin{figure}
		\centering
			\includegraphics[width=\textwidth]{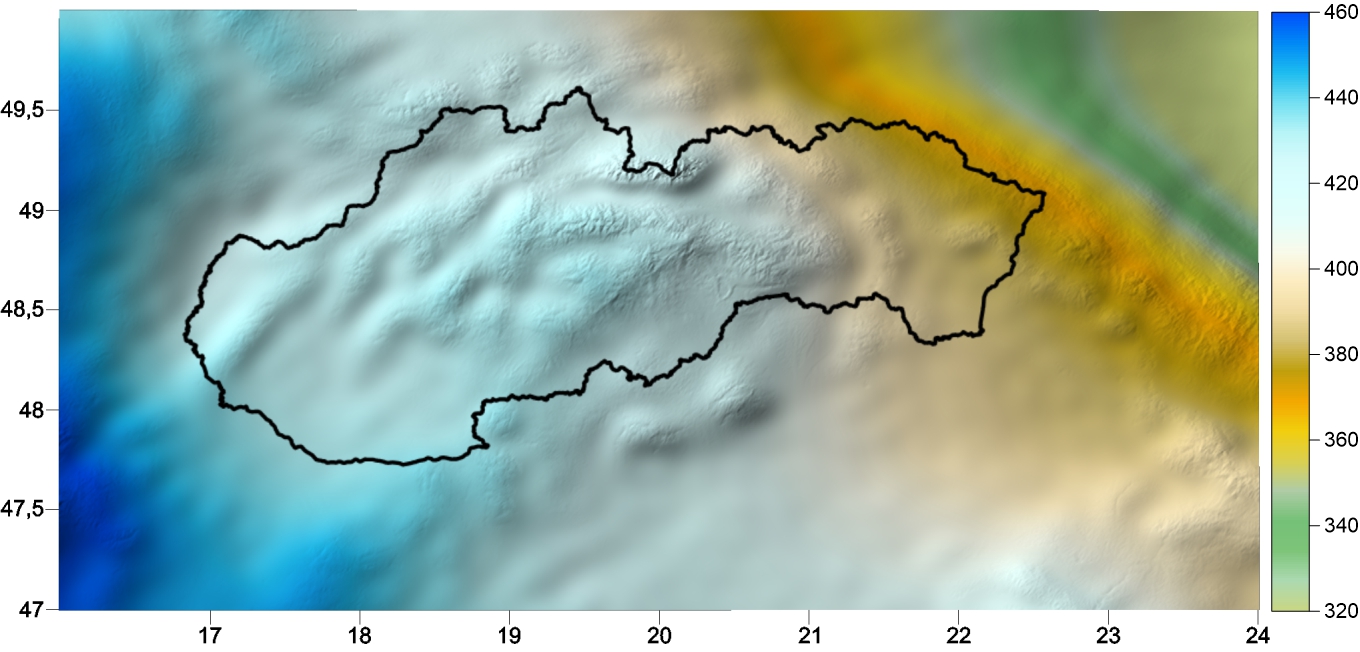}
		\caption{Disturbing potential in the area of Slovakia.}
		\label{fig:potential_slovakia}
	\end{figure}

	\section{Conclusion}\label{sec:conclusion}
	
		We developed a framework for designing and analysing Finite Volume schemes for the Laplace equation with oblique boundary conditions. This framework, which can easily be extended to more general second order differential equations, consists in splitting the boundary condition into a normal and a tangential component, the later being handled as an advection term along the boundary of the domain; to ensure optimal convergence rates, this advection term is discretised using a centered scheme, with added numerical diffusion for stability purposes. The convergence analysis was carried out under usual coercivity and consistency assumptions on the numerical fluxes, and therefore applies to a range of possible FV discretisations. This analysis establishes first-order rates of convergence in a discrete energy ($H^1$) norm.
	
	We then constructed specific fluxes, in the case where the computational domain is discretised using generalised hexahedra, and we identified geometrical conditions, easy to check during simulations, that ensure their coercivity and consistency. Two alternative discretisations of the oblique boundary conditions were also presented: the first one uses an upwind FV discretisation of the boundary advection, the second is not based on a FV discretisation on the boundary, but rather on splitting the outer normal to the boundary into its oblique component, and a tangential component discretised using finite differences and the specific geometry of the mesh.
	
	We then provided extensive numerical tests, designed to assess the accuracy and robustness of the method, for various choices of the computational domain, and of the oblique vector field defining the boundary conditions. These tests confirmed, for all three schemes, the theoretical first-order rate of convergence in energy norm. In some tests, the energy rate of convergence is actually apparently higher than the theoretical one (but the asymptotic convergence rate might not have been attained at the considered mesh sizes). The second variant, based on a splitting of the outer normal, seems to present the best accuracy in our initial tests, when the velocity field is not too tangential to the boundary. For a nearly tangential velocity field, this splitting scheme breaks down as the numerical solver fails to find a solution to it. On the contrary, the other two variants remain robust and convergent in this extreme situation, albeit with a reduced accuracy. All three schemes were used to compute a quasi-geoidal height in the region of Slovakia. For this test, all methods give results with comparable quality.

	\appendix
	
	\section{Proof of Theorem \ref{th:error.estimate}}\label{appen:proof}
	
	The proof hinges on the 3rd Strang lemma of \cite{DPD18}. Let us first recast the scheme \eqref{eq:scheme} under
	a variational formulation. Take $\varphi\in V_h$, multiply \eqref{eq:scheme:balance} by $\varphi_{p}$ and summing over
	$p\in\mesh$ to get
	\begin{align*}
		& \sum_{p\in\mesh}\Bigg(\sum_{\sigma\in \F(p)\backslash \FG}\flux{p,\sigma}(T)\varphi_{p}+\sum_{\sigma\in \F(p)\cap\FG}\sum_{e\in \E(\sigma)}T_{e}\left[\vec{W}\cdot\vec{n}\right]_{\sigma,e}\varphi_{p}\\
		& \qquad-\sum_{\sigma\in \F(p)\cap\FG}\left[\DIVG\vec{W}\right]_{\sigma}T_{p}\varphi_{p}+Rh_\Gamma\sum_{\sigma\in \F(p)\cap\FG}\sum_{e\in \E(\sigma)}\flux[\Gamma]{\sigma,e}(T)\varphi_{p}\Bigg)
		=\sum_{p\in\mesh}\sum_{\sigma\in \F(p)\cap\FG}\iint_\sigma g \varphi_p.
	\end{align*}
	Using the conservativity of the fluxes $\flux{p,\sigma}$ (see \eqref{eq:cons.inner}), $\flux[\Gamma]{\sigma,e}(T)$ (see \eqref{eq:scheme:conserv}) and $T_{e}\brak{\vec{W}\cdot\vec{n}}{\sigma,e}$ (by definition of $\brak{\vec{W}\cdot\vec{n}}{\sigma,e}$), and the zero value of $T_e$ if $e$ is a boundary edge in $\Gamma$, we gather the sums in the left-hand side by faces and edges as in \cite[Proofs of Theorem 27 and 33]{DPD18} to find
	\begin{equation}\label{scheme:recast}
		\begin{aligned}
			& \sum_{\sigma\in \F\backslash\FG}\flux{p,\sigma}(T)\left(\varphi_{p}-\varphi_{q}\right)+\sum_{\sigma\in \FG}\sum_{e\in \E(\sigma)}T_{e}\left[\vec{W}\cdot\vec{n}\right]_{\sigma,e}\left(\varphi_{p}-\varphi_{e}\right)\\
			& \qquad-\sum_{\sigma\in \FG}\left[\DIVG\vec{W}\right]_{\sigma}T_{p}\varphi_{p}+Rh_\Gamma\sum_{\sigma\in \FG}\sum_{e\in \E(\sigma)}\flux[\Gamma]{\sigma,e}(T)\left(\varphi_{p}-\varphi_{e}\right)
			=\sum_{p\in\mesh}\sum_{\sigma\in \F(p)\cap\FG}\iint_\sigma g \varphi_p.
		\end{aligned}
	\end{equation}
	The solution $T\in V_h$ to the scheme thus satisfies $a_h(T,\varphi)=\ell_h(\varphi)$ for all $\varphi\in V_h$,
	with $a_h(T,\varphi)$ (resp. $\ell_h$) the bilinear form (resp. linear form) in the left-hand side (resp. right-hand side) of \eqref{scheme:recast}. Owing to the 3rd Strang lemma \cite[Theorem 10]{DPD18}, the estimate \eqref{eq:error.estimate} follows if we establish the coercivity and consistency properties:
	\begin{equation}\label{prop:coer}
		a_h(\varphi,\varphi)\gtrsim \norm[V_h]{\varphi}^2\quad\forall \varphi\in V_h
	\end{equation}
	and, letting $\mathcal E_h(\overline{T};\varphi):=\ell_h(\varphi)-a_h(\overline{T},\varphi)$ be the consistency error,
	\begin{equation}\label{prop:cons}
		\sup_{\varphi\in V_h,\,\norm[V_h]{\varphi}\le 1}\mathcal E_h(\overline{T};\varphi)\lesssim h\norm[C^2(\overline{\Omega})]{\overline{T}}.
	\end{equation}

	\subsection{Coercivity}
	
	The coercivity properties \eqref{eq:coer.flux.Omega} and \eqref{eq:coer.flux.Gamma} show that
	\begin{equation}\label{coer.1}
		\begin{aligned}
			a_{h}(\varphi,\varphi) \ge{}& \rho_\Omega \seminorm[V_h,\Omega]{\varphi}^{2}+Rh_\Gamma\rho_\Gamma \seminorm[V_h,\Gamma]{\varphi}^{2}\\
			&+\underbrace{\sum_{\sigma\in \FG}\sum_{e\in \E(\sigma)}\varphi_{e}\left[\vec{W}\cdot\vec{n}\right]_{\sigma,e}\left(\varphi_{p}-\varphi_{e}\right)
				-\sum_{\sigma\in \FG}\left[\DIVG\vec{W}\right]_{\sigma}\varphi_{p}^{2}}_{\term_1}.
		\end{aligned}
	\end{equation}
	Simple algebraic identities show that
	\begin{align*}
		\term_1={}& \sum_{\sigma\in \FG}\sum_{e\in \E(\sigma)}\left[\vec{W}\cdot\vec{n}\right]_{\sigma,e}\left(\frac{\varphi_{e}-\varphi_{p}}{2}+\frac{\varphi_{e}+\varphi_{p}}{2}\right)\left(\varphi_{p}-\varphi_{e}\right)-\sum_{\sigma\in \FG}\left[\DIVG\vec{W}\right]_{\sigma}\varphi_{p}^{2}\\
		={}&\sum_{\sigma\in \FG}\sum_{e\in \E(\sigma)}\brak{\vec{W}\cdot\vec{n}}{\sigma,e}\frac{1}{2}\left(-\left(\varphi_{e}-\varphi_{p}\right)^{2}+\varphi_{p}^{2}-\varphi_{e}^{2}\right)-\sum_{\sigma\in \FG}\brak{\DIVG\vec{W}}{\sigma}\varphi_{p}^{2}
	\end{align*}
	By conservativity of $\brak{\vec{W}\cdot\vec{n}}{\sigma,e}$ and zero value of $\varphi_e$ on boundary edges of $\Gamma$,
	\[
	\sum_{\sigma\in \FG}\sum_{e\in \E(\sigma)}\brak{\vec{W}\cdot\vec{n}}{\sigma,e}\varphi_{e}^{2}=
	\sum_{e\in \EGi}\left(\brak{\vec{W}\cdot\vec{n}}{\sigma,e}+\brak{\vec{W}\cdot\vec{n}}{\sigma',e}\right)\varphi_e^2=0.
	\]
	Hence, since $\sum_{e\in \E(\sigma)}\brak{\vec{W}\cdot\vec{n}}{\sigma,e}=\brak{\DIVG\vec{W}}{\sigma}$,
	\begin{equation}\label{coer.2}
		\term_1=-\sum_{\sigma\in \FG}\sum_{e\in \E(\sigma)}\frac{1}{2}\brak{\vec{W}\cdot\vec{n}}{\sigma,e}\left(\varphi_{p}-\varphi_{e}\right)^{2}-\frac{1}{2}\sum_{\sigma\in \FG}\brak{\DIVG\vec{W}}{\sigma}\varphi_{p}^{2}.
	\end{equation}
	Using $\brak{\DIVG\vec{W}}{\sigma}\le \norm[C(\Gamma)]{(\DIVG \vec{W})^+}|\sigma|$ and the trace inequality \eqref{trace:ineq}, we write
	\[
	-\sum_{\sigma\in \FG}\brak{\DIVG\vec{W}}{\sigma}\varphi_{p}^{2}\ge
	-\norm[C(\Gamma)]{(\DIVG\vec{W})^+}\sum_{\sigma\in\FG}|\sigma| \varphi_p^2\ge -\norm[C(\Gamma)]{(\DIVG\vec{W})^+}
	C_{\rm tr}\seminorm[V_h,\Omega]{\varphi}^2.
	\]
	Plugging this into \eqref{coer.2} and noticing, since $d_{pe}^\bot\le h_\Gamma$, that
	\[
	\brak{\vec{W}\cdot\vec{n}}{\sigma,e}\le \norm[C(\Gamma)^d]{\vec{W}}|e|\le h_\Gamma \norm[C(\Gamma)^d]{\vec{W}}\frac{|e|}{d_{pe}^\bot},
	\]
	we obtain
	\[
	\term_1\ge -\frac12 h_\Gamma \norm[C(\Gamma)^d]{\vec{W}}\seminorm[V_h,\Gamma]{\varphi_h}^2-\frac12 C_{\rm tr}\norm[C(\Gamma)]{(\DIVG\vec{W})^+}\seminorm[V_h,\Omega]{\varphi_h}^2.
	\]
	Coming back to \eqref{coer.1}, we infer that
	\begin{equation}\label{coer.3}
		a_{h}(\varphi,\varphi) \ge \left(\rho_\Omega-\frac12 C_{\rm tr}\norm[C(\Gamma)]{(\DIVG\vec{W})^+}\right) \seminorm[V_h,\Omega]{\varphi}^{2}+h_\Gamma\left(R\rho_\Gamma - \frac12 \norm[C(\Gamma)^d]{\vec{W}}\right)\seminorm[V_h,\Gamma]{\varphi}^{2}.
	\end{equation}
	Owing to Assumption \ref{assum:coer}, this proves \eqref{prop:coer}.

	\subsection{Consistency}
	
	Using \eqref{eq:exact_scheme} and recalling that $\ell_h(\varphi)$ is defined by the right-hand side of \eqref{scheme:recast}, we write
	\begin{align}
		\ell_{h}(\varphi) ={}&\sum_{p\in\mesh}\left(\sum_{\sigma\in \F(p)\backslash\FG}\overline{F}_{p,\sigma}(\overline{T})+
		\sum_{\sigma\in \F(p)\cap\FG}\sum_{e\in E(\sigma)}\brak{\,\overline{T}\vec{W}\cdot\vec{n}}{\sigma,e}-\sum_{\sigma\in \F(p)\cap\FG}\brak{\,\overline{T}\DIVG\vec{W}}{\sigma}\right)\varphi_{p}\nonumber\\
		={}&\sum_{\sigma\in \F\backslash\FG}\overline{F}_{p,\sigma}(\overline{T})\left(\varphi_{p}-\varphi_{q}\right)+
		\sum_{\sigma\in \FG}\sum_{e\in \E(\sigma)}\brak{\,\overline{T}\vec{W}\cdot\vec{n}}{\sigma,e}\left(\varphi_{p}-\varphi_{e}\right)-\sum_{\sigma\in \FG}\brak{\,\overline{T}\DIVG\vec{W}}{\sigma}\varphi_{p},\label{eq:discrete_functional_exact}
	\end{align}
	where we have used the conservativity of the fluxes to gather the sums by faces in the second equality. Subtracting $a_h(I_h\overline{T},\varphi)$ (given by the left-hand side of \eqref{scheme:recast} with $T$ replaced by $I_h\overline{T}$), we can split the consistency error into four terms:
	\begin{equation}\label{cons.error.split}
		\mathcal E_h(I_h \overline{T};\varphi)=\term_{c,1}+\term_{c,2}+\term_{c,3}+\term_{c,4}
	\end{equation}
	with, setting $I_h \overline{T}=((\overline{T}_p)_{p\in\mesh},(\overline{T}_\sigma)_{\sigma\in\FD},(\overline{T}_e)_{e\in\EG})$,
	\begin{align*}
		\term_{c,1}={}& \sum_{\sigma\in \F\backslash\FG}\left(\overline{F}_{p,\sigma}(\overline{T})-\flux{p,\sigma}(I_h \overline{T})\right)\left(\varphi_{p}-\varphi_{q}\right),\\
		\term_{c,2}={}& \sum_{\sigma\in \FG}\sum_{e\in \E(\sigma)}\left(\brak{\,\overline{T}\vec{W}\cdot\vec{n}}{\sigma,e}-\overline{T}_{e}\brak{\vec{W}\cdot\vec{n}}{\sigma,e}\right)\left(\varphi_{p}-\varphi_{e}\right),\\
		\term_{c,3}={}& -\sum_{\sigma\in \FG}\left(\brak{\,\overline{T}\DIVG\vec{W}}{\sigma}-\overline{T}_{p}\brak{\DIVG\vec{W}}{\sigma}\right)\varphi_{p},\\
		\term_{c,4}={}& Rh_\Gamma\sum_{\sigma\in \FG}\sum_{e\in \E(\sigma)}\flux[\Gamma]{\sigma,e}(I_h \overline{T})\left(\varphi_{p}-\varphi_{e}\right).
	\end{align*}
	
	We now estimate each of these terms.
	
	\paragraph{Term $\term_{c,1}$.} Introducing $\sqrt{d_{pq}/|\sigma|}$ and using a Cauchy--Schwarz inequality, the consistency property \eqref{eq:cons.flux.Omega} yields
	\begin{align*}
		|\term_{c,1}| \le{}&\left(\sum_{\sigma\in \F\backslash\FG}\frac{d_{pq}}{|\sigma|}\left(\overline{F}_{p,\sigma}(\overline{T})-\flux{p,\sigma}(I_h \overline{T})\right)^{2}\right)^{\frac12}\left(\sum_{\sigma\in \F\backslash\FG}\frac{|\sigma|}{d_{pq}}\left(\varphi_{p}-\varphi_{q}\right)^{2}\right)^{\frac12}\\
		\lesssim{}& h\norm[C^2(\overline{\Omega})]{\overline{T}}\left(\sum_{\sigma\in \F\backslash\FG}d_{pq}|\sigma|\right)^{\frac12}\seminorm[V_h,\Omega]{\varphi}.
	\end{align*}
	Let $D_{p\sigma}$ be the convex hull of $\vec{x}_p$ and $\sigma$. If $\sigma$ is flat, by \cite[Lemma B.2]{gdm} we have $|D_{p\sigma}|=d^\bot_{p,\sigma}|\sigma|/3$ and thus, by definition of ${\rm reg}_\mesh$ (which implies $d_{pq}\le {\rm diam}(p)+{\rm diam}(q)\lesssim {\rm diam}(p)\lesssim d_{p\sigma}^\bot$),
	\begin{equation}\label{est:reg.fac}
		\sum_{\sigma\in \F\backslash\FG}d_{pq}|\sigma|\le \sum_{p\in\mesh}\sum_{\sigma\in\F(p)\backslash\FG}d_{pq}|\sigma|
		\lesssim \sum_{p\in\mesh}\sum_{\sigma\in\F(p)\backslash\FG}|D_{p\sigma}|=\sum_{p\in\mesh}|p|=|\Omega|.
	\end{equation}
	This final estimate also holds in case of non-flat $\sigma$, as can be seen approximating $\sigma$ by piecewise flat surfaces.
	Hence,
	\begin{equation}\label{T1c.h}
		|\term_{c,1}|\lesssim h\norm[C^2(\overline{\Omega})]{\overline{T}}\norm[V_h]{\varphi}.
	\end{equation}

	\paragraph{Term $\term_{c,2}$.} We first estimate the consistency of the fluxes involved in this term.
	Using the definition of the interpolant \eqref{eq:interpolant} we have $\int_e (\overline{T}-\overline{T}_e)=0$ and thus
	\begin{multline}
		\brak{\,\overline{T}\vec{W}\cdot\vec{n}}{\sigma,e}-\overline{T}_{e}\brak{\vec{W}\cdot\vec{n}}{\sigma,e}
		= \int_{e}\overline{T}\vec{W}\cdot\vec{n}_{\sigma,e}-\overline{T}_{e}\int_{e}\vec{W}\cdot\vec{n}_{\sigma,e}
		= \int_{e}\left(\overline{T}-\overline{T}_{e}\right)\vec{W}\cdot\vec{n}_{\sigma,e}\\
		= \int_{e}\left(\overline{T}-\overline{T}_{e}\right)\left(\vec{W}\cdot\vec{n}_{\sigma,e}-\brak{\vec{W}\cdot\vec{n}}{\sigma,e}\right)
		\leq h_\Gamma^{2}|e|\norm[C^2(\overline{\Omega})]{\overline{T}}\norm[C^1(\Gamma)^d]{\vec{W}},
		\label{est.flux.T2c}\end{multline}
	where the conclusion follows from a mean value theorem on $\overline{T}$ and $\vec{W}$.
	Applying a Cauchy--Schwarz inequality and using \eqref{est.flux.T2c}, we infer
	\begin{align}
		|\term_{c,2}| \lesssim{}& \left(\sum_{\sigma\in \FG}\sum_{e\in \E(\sigma)}\frac{d_{pe}^\bot}{|e|}\left(\brak{\,\overline{T}\vec{W}\cdot\vec{n}}{\sigma,e}-\overline{T}_{e}\brak{\vec{W}\cdot\vec{n}}{\sigma,e}\right)^{2}\right)^{\frac12}\left(\sum_{\sigma\in \FG}\sum_{e\in \E(\sigma)}\frac{|e|}{d_{pe}^\bot}\left(\varphi_{p}-\varphi_{e}\right)^{2}\right)^{\frac12}\nonumber\\
		\lesssim{}& h_\Gamma^2 \norm[C^1(\overline{\Omega})]{\overline{T}} \left(\sum_{\sigma\in \FG}\sum_{e\in \E(\sigma)}d_{pe}^\bot |e|\right)^{\frac12}\seminorm[V_h,\Gamma]{\varphi}.
		\label{for:rem.centred}
	\end{align}
	In a similar way as in the last equalities in \eqref{est:reg.fac}, $D_{pe}$ being the convex hull of $\vec{x}_p$ and $e$ we have
	\begin{equation}\label{diam.Gamma}
		\sum_{\sigma\in \FG}\sum_{e\in \E(\sigma)}d_{pe}^\bot|e|
		=2\sum_{\sigma\in \FG}\sum_{e\in \E(\sigma)}|D_{pe}|= 2|\Gamma|.
	\end{equation}
	Since $\seminorm[V_h,\Gamma]{\varphi}\le h_\Gamma^{-\frac12}\norm[V_h]{\varphi}$, we conclude that
	\begin{equation}\label{T2c.h}
		|\term_{c,2}|\lesssim h_\Gamma^{\frac32} \norm[C^2(\overline{\Omega})]{\overline{T}} \norm[V_h]{\varphi}.
	\end{equation}
	
	\begin{remark}[Centred discretisation of the advective term]\label{rem:centred}
		The approximation in the first term of \eqref{eq:approx.brak} corresponds to a centred discretisation of the advection term $\DIVG(\overline{T}\vec{W})$ on $\Gamma$. To stabilise this centred discretisation and ensure the coercivity of the scheme, we have to add the artificial diffusion through the terms $Rh_\Gamma\FlG(T)$ in \eqref{eq:scheme:balance}. A standard option to avoid adding numerical diffusion is to directly use an upwind discretisation of the advective term, as in \eqref{eq:upwind_scheme}. In this case, since stability would not require to introduce artificial diffusion, we would only consider cell unknowns in $V_h$ (and not introduce edge unknowns), and we would take $\norm[V_h]{\cdot}=\seminorm[V_h,\Omega]{\cdot}$.
		The resulting scheme would be \eqref{eq:upwind_scheme}.
		Such a choice, however, would prevent us from introducing $\brak{\vec{W}\cdot\vec{n}}{\sigma,e}$ in \eqref{est.flux.T2c} and the resulting estimate would be in $\mathcal O(h_\Gamma)$ instead of $\mathcal O(h_\Gamma^2)$. Carrying on as in \eqref{for:rem.centred} but with $\norm[V_h,\Omega]{\cdot}$ instead of the absent $\seminorm[V_h,\Gamma]{\cdot}$, with the natural assumption that $|e|h_\Gamma\lesssim |\sigma|$, we would arrive at \eqref{T2c.h} with $h_\Gamma^{\frac12}$ instead of $h_\Gamma^{\frac32}$. The final consistency estimate, and thus error estimate, would then be in $\mathcal O(h^{\frac12})$ instead of $\mathcal O(h)$.
	\end{remark}

	\paragraph{Term $\term_{c,3}$.} Notice first that
	\[
	\left|\brak{\,\overline{T}\DIVG\vec{W}}{\sigma}-\overline{T}_{p}\brak{\DIVG\vec{W}}{\sigma}\right|
	= \left|\iint_{\sigma}\left(\overline{T}-\overline{T}_{p}\right)\DIVG\vec{W}\right|
	\le h_\Gamma \norm[C^1(\overline{\Omega})]{\overline{T}} |\sigma| \norm[C^0(\Gamma)]{\DIVG\vec{W}}.
	\]
	Hence, using a Cauchy--Schwarz inequality and the trace inequality \eqref{trace:ineq},
	\begin{align}
		|\term_{c,3}|\le{}&\left(\sum_{\sigma\in \FG}\frac{1}{|\sigma|}\left(\brak{\,\overline{T}\DIVG\vec{W}}{\sigma}-\overline{T}_{p}\brak{\DIVG\vec{W}}{\sigma}\right)^{2}\right)^{\frac12}\left(\sum_{\sigma\in \FG}|\sigma|\varphi_{p}^{2}\right)^{\frac12}\nonumber\\
		\lesssim{}& h_\Gamma \norm[C^1(\overline{\Omega})]{\overline{T}} \left(\sum_{\sigma\in \FG}|\sigma|\right)^{\frac12}
		C_{\rm tr}\seminorm[V_h,\Omega]{\varphi}\le h_\Gamma \norm[C^2(\overline{\Omega})]{\overline{T}} |\Gamma|^{\frac12}
		C_{\rm tr}\norm[V_h]{\varphi}.
		\label{T3c.h}
	\end{align}
	
	\paragraph{Term $\term_{c,4}$.} Introducing the exact surface fluxes $\overline{F}_{\sigma,e}(\overline{T})=-\int_e \nabla \overline{T}\cdot\vec{n}_{\sigma,e}$ on $\Gamma$, we write
	\[
	\term_{c,4} = Rh_\Gamma\underbrace{\sum_{\sigma\in \FG}\sum_{e\in \E(\sigma)}\left(\flux[\Gamma]{\sigma,e}(I_h \overline{T})-\overline{F}_{\sigma,e}(\overline{T})\right)\left(\varphi_{p}-\varphi_{e}\right)}_{\term_{c,4,1}}+Rh_\Gamma\underbrace{\sum_{\sigma\in \FG}\sum_{e\in \E(\sigma)}\overline{F}_{\sigma,e}(\overline{T})\left(\varphi_{p}-\varphi_{e}\right)}_{\term_{c,4,2}}.
	\]
	A Cauchy--Schwarz inequality, the consistency property \eqref{eq:cons.flux.Gamma}, and \eqref{diam.Gamma} show that 
	\begin{align}
		|\term_{c,4,1}|\le{}& \left(\sum_{\sigma\in \FG}\sum_{e\in \E(\sigma)}\frac{d_{pe}^\bot}{|e|}\left(\flux[\Gamma]{\sigma,e}(I_h \overline{T})-\overline{F}_{\sigma,e}(\overline{T})\right)^2\right)^{\frac12}\left(\sum_{\sigma\in \FG}\sum_{e\in \E(\sigma)}\frac{|e|}{d_{pe}^\bot}\left(\varphi_{p}-\varphi_{e}\right)^{2}\right)^{\frac12}\nonumber\\
		\lesssim{}& h_\Gamma\norm[C^2(\overline{\Omega})]{\overline{T}}\seminorm[V_h,\Gamma]{\varphi}.
		\label{T4c.1}
	\end{align}
	For $\term_{c,4,2}$, we use the conservativity of the fluxes $\overline{F}_{\sigma,e}(\overline{T})$, the fact that $\varphi_e=0$ for edges on the boundary of $\Gamma$, $\sum_{e\in \E(\sigma)}\overline{F}_{\sigma,e}(\overline{T})=-\iint_\sigma \Delta_\Gamma \overline{T}$, and the trace inequality \eqref{trace:ineq} to write
	\begin{align*}
		|\term_{c,4,2}| =\left|\sum_{\sigma\in \FG}\sum_{e\in \E(\sigma)}\overline{F}_{\sigma,e}(\overline{T})\varphi_{p}\right|
		=\left|\sum_{\sigma\in \FG}-\iint_\sigma\Delta_\Gamma \overline{T}\varphi_{p}\right|
		\le{}&\left(\sum_{\sigma\in \FG}\frac{1}{|\sigma|}\left(\iint_\sigma \Delta_\Gamma T\right)^{2}\right)^{\frac12}\left(\sum_{\sigma\in \FG}|\sigma|\varphi_{p}^{2}\right)^{\frac12}\\
		\le{}& \norm[C^2(\overline{\Omega})]{\overline{T}}|\Gamma|^{\frac12}C_{\rm tr}\seminorm[V_h,\Omega]{\varphi}
		\lesssim \norm[C^2(\overline{\Omega})]{\overline{T}}\norm[V_h]{\varphi}.
	\end{align*}
	Combined with \eqref{T4c.1} and recalling that $\term_{4,c}=Rh_\Gamma \term_{4,c,1}+Rh_\Gamma \term_{4,c,2}$ this shows that
	\begin{equation}\label{T4c.h}
		|\term_{c,4}|\lesssim h_\Gamma \norm[C^2(\overline{\Omega})]{\overline{T}}\norm[V_h]{\varphi}.
	\end{equation}
	
	Gathering \eqref{T1c.h}, \eqref{T2c.h}, \eqref{T3c.h} and \eqref{T4c.h} in \eqref{cons.error.split}, we infer that \eqref{prop:cons} holds, which concludes the proof of Theorem \ref{th:error.estimate}.
	
	\section{Proof of Proposition \ref{prop.fluxes}}\label{appen:prop.method}
	
	\subsection{Boundary fluxes}
	
	The coercivity of the HMM fluxes result from the construction of the method as a Gradient Discretisation Method, see \cite[Chapter 13]{gdm} -- we note that this coercivity is purely algebraic, and not impacted by the curvature of the faces $\sigma\in\FG$ or of their edges. In the case of flat faces and edges, the consistency of the fluxes \eqref{hmm:def.flux} is a consequence of \eqref{hmm:cons.grad}, see \cite[Chapter 13]{gdm} or \cite[Example 31]{DPD18}. 
	
	Consider now a curved face $\sigma\in\FG$, and assume that all edges of $\sigma$ are ``only curved along $\sigma$'' in the sense that $\vec{n}_{\sigma,e}$ is constant over $e$ for all $e\in\E(\sigma)$ (see Remark \ref{rem:curved.e} otherwise).
	Because $\Gamma$ is a smooth surface, $\overline{\vec{x}}_e-\vec{x}_p$ is asymptotically close to the tangent plane to $\Gamma$ at any point of $\sigma$ (that is, the projection of $\overline{\vec{x}}_e-\vec{x}_p$ in any normal direction to $\sigma$ has length $\mathcal O(h_\Gamma^2)$). Taylor expansions at any point of $\sigma$ and the consistency property \eqref{hmm:cons.grad} thus give a constant $C$ independent of the mesh such that, for $\varphi\in C^2(\Gamma)$,
	\begin{equation}\label{hmm:cons.T}
		|S_{p,e}(I_h\varphi)|\le C\norm[C^2(\Gamma)]{\varphi}h_\Gamma^2.
	\end{equation}
	Using this estimate and \eqref{hmm:cons.grad}, the arguments developed in \cite[Chapter 13]{gdm} can then easily be adapted and yield \eqref{eq:cons.flux.Gamma}.
	
	\subsection{Inner fluxes}
	
	\subsubsection{Coercivity}
	Without any loss of generality we can select the vertex labels $\vec{x}_{pq}^{\oplus}$ and $\vec{x}_{pq}^{\ominus}$
	(resp. $\vec{x}_{pq}^{\boxplus}$ and $\vec{x}_{pq}^{\boxminus}$) such that $\alpha_{pq}^{\ocircle}\geq0$ (resp. $\alpha_{pq}^{\square}\geq0$). Recalling the definition \eqref{eq:flux_approx} of the fluxes, we use the zero value on the Dirichlet boundary and the Young inequality $xy\ge -\frac12 x^2-\frac12 y^2$ to write
		\begin{align}
		\sum_{\sigma\in \F\backslash\FG}\flux{p,\sigma}{}&(\varphi)\left(\varphi_{p}-\varphi_{q}\right)
		= \sum_{\sigma_{pq}\in \F\backslash\FG}|\sigma_{pq}|\left(\frac{1}{\beta_{pq}}\frac{\varphi_{p}-\varphi_{q}}{d_{pq}}+\frac{\alpha_{pq}^{\ocircle}}{\beta_{pq}}\frac{\varphi_{pq}^{\oplus}-\varphi_{pq}^{\ominus}}{d_{pq}^{\ocircle}}+\frac{\alpha_{pq}^{\square}}{\beta_{pq}}\frac{\varphi_{pq}^{\boxplus}-\varphi_{pq}^{\boxminus}}{d_{pq}^{\square}}\right)\left(\varphi_{p}-\varphi_{q}\right)\nonumber\\
		={}& \sum_{\sigma_{pq}\in \F\backslash(\FG\cup\FD)}|\sigma_{pq}|\left(\frac{1}{\beta_{pq}}\frac{\varphi_{p}-\varphi_{q}}{d_{pq}}+\frac{\alpha_{pq}^{\ocircle}}{\beta_{pq}}\frac{\varphi_{pq}^{\oplus}-\varphi_{pq}^{\ominus}}{d_{pq}^{\ocircle}}+\frac{\alpha_{pq}^{\square}}{\beta_{pq}}\frac{\varphi_{pq}^{\boxplus}-\varphi_{pq}^{\boxminus}}{d_{pq}^{\square}}\right)\left(\varphi_{p}-\varphi_{q}\right)\nonumber\\
		&+\sum_{\sigma_{pq}\in \FD}\frac{|\sigma_{pq}|}{d_{pq}}\frac{1}{\beta_{pq}}(\varphi_{p}-\varphi_{q})^2\nonumber\\
		\geq & \sum_{\sigma_{pq}\in \F\backslash(\FG\cup\FD)}|\sigma_{pq}|\Bigg(\frac{1}{\beta_{pq}d_{pq}}\left(\varphi_{p}-\varphi_{q}\right)^{2}-\frac{\alpha_{pq}^{\ocircle}}{2\beta_{pq}d_{pq}^{\ocircle}}\left(\varphi_{p}-\varphi_{q}\right)^{2}-\frac{\alpha_{pq}^{\square}}{2\beta_{pq}d_{pq}^{\square}}\left(\varphi_{p}-\varphi_{q}\right)^{2}\nonumber\\
		& \qquad\qquad\qquad\qquad\qquad\qquad\qquad-\frac{\alpha_{pq}^{\ocircle}}{2\beta_{pq}d_{pq}^{\ocircle}}\left(\varphi_{pq}^{\oplus}-\varphi_{pq}^{\ominus}\right)^{2}-\frac{\alpha_{pq}^{\square}}{2\beta_{pq}d_{pq}^{\square}}(\varphi_{pq}^{\boxplus}-\varphi_{pq}^{\boxminus})^{2}\Bigg)\nonumber\\
		&+\sum_{\sigma_{pq}\in \FD}\frac{|\sigma_{pq}|}{d_{pq}}\frac{1}{\beta_{pq}}(\varphi_{p}-\varphi_{q})^2.
		\label{eq:coer.1}
	\end{align}
	
	In order to establish \eqref{eq:coer.flux.Omega}, we now need to find a lower bound of this quantity in terms of sums of $(\varphi_a-\varphi_b)^2$ for $(a,b)$ pairs of neighbouring control volumes.
	The first stage is to recast $\varphi_{pq}^{\oplus}-\varphi_{pq}^{\ominus}$ and $\varphi_{pq}^{\boxplus}-\varphi_{pq}^{\boxminus}$ as combinations of differences of $\varphi$ on neighbouring control volumes. Without loss of generality, we consider $\varphi_{pq}^{\oplus}-\varphi_{pq}^{\ominus}$. We have to deal with three cases, depending if the corresponding vertices are both internal, if one lies on $\Gamma$, or if one lies on the Dirichlet boundary $\partial\Omega\backslash\Gamma$.
	
	\medskip
	
	\textbf{Case 1: Internal vertices}. We assume here that $\vec{x}_{pq}^\oplus$ and $\vec{x}_{pq}^\ominus$ are both in $\Omega$.
	Let $\vec{x}_{pq}^\genexp$ denote any one of these two vertices. Recalling the definitions in Section \ref{sec:prop.fluxes} (see also Figure \ref{fig:neigh}) of $F_{r,pq}^\genexp$ and $e^\genexp_{r,pq}$, we see that the set $\cup_{r=p,q} (F_{r,pq}^\genexp\cup\{r,e_{r,pq}^\genexp\})$ is made of the eight control volumes around $\vec{x}_{pq}^\genexp$ whose unknowns are involved in the definition \eqref{def:varphipq} of $\varphi_{pq}^\genexp$. Hence, we can decompose $\varphi_{pq}^\oplus-\varphi_{pq}^\ominus$ as
	\[
	\varphi_{pq}^{\oplus}-\varphi_{pq}^{\ominus}
	= \frac{1}{8}\sum_{r=p,q}\left(\varphi_{e_{r,pq}^{\oplus}}+\sum_{f\in F_{r,pq}^{\oplus}}\varphi_{f}+\varphi_{r}-\varphi_{r}-\sum_{f\in F_{r,pq}^{\ominus}}\varphi_{f}-\varphi_{e_{r,pq}^{\ominus}}\right).
	\]
	Inside the sum in the right-hand side, each cell unknown appears with the coefficients represented in Figure \ref{fig:coefs} (left).
	Our goal is to gather these terms together in order to write $\varphi_{pq}^{\oplus}-\varphi_{pq}^{\ominus}$ as a combination of terms $\varphi_{b}-\varphi_{a}$ with $a$ and $b$ neighbouring control volumes. This is done by splitting the coefficients in order to associate (parts of) each cell unknown with a neighbouring cell unknown, as in Figure \ref{fig:coefs} (right). 
	
	\begin{figure}[h!]
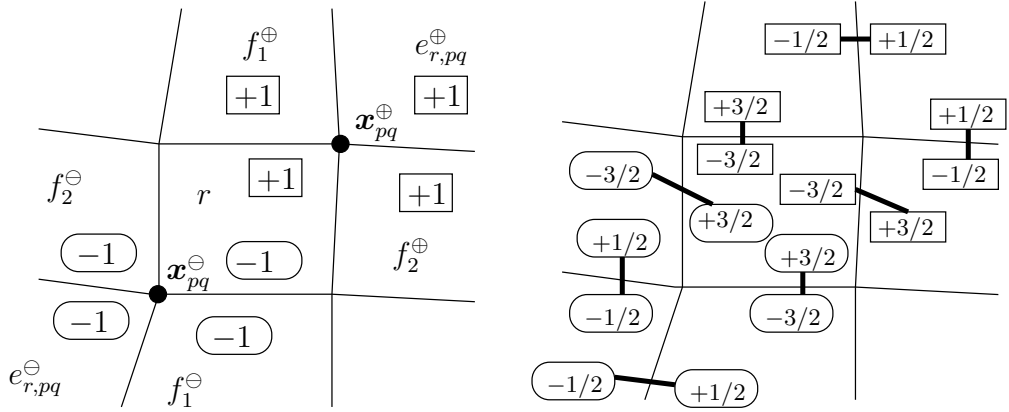

		\centering
		\begin{tabular}{c@{\qquad}c}
			\input{\figsFolfer/fig-coefs.pdf_t}&\input{\figsFolfer/fig-coefs2.pdf_t}
		\end{tabular}
		\caption{Internal vertices. Left: coefficients associated to each cell unknown in the expression of $\varphi_{pq}^{\oplus}-\varphi_{pq}^{\ominus}$ (projection on 2D). Right: splitting of these coefficients, and associations of neighbouring control volumes. 
			The sums of coefficients in each cell are the same in both pictures.
		}\label{fig:coefs}
	\end{figure}
	
	This consists in writing
	\begin{align*}
		\varphi_{pq}^{\oplus}-\varphi_{pq}^{\ominus}={} & \frac{1}{8}\sum_{r=p,q}\Bigg(\frac{\varphi_{e_{r,pq}^{\oplus}}}{2}+\frac{\varphi_{e_{r,pq}^{\oplus}}}{2}+\sum_{f\in F_{r,pq}^{\oplus}}\left(3\frac{\varphi_{f}}{2}-\frac{\varphi_{f}}{2}\right)+3\frac{\varphi_{r}}{2}+3\frac{\varphi_{r}}{2}\\
		& \qquad\qquad-3\frac{\varphi_{r}}{2}-3\frac{\varphi_{r}}{2}-\sum_{f\in F_{r,pq}^{\ominus}}\left(3\frac{\varphi_{f}}{2}-\frac{\varphi_{f}}{2}\right)-\frac{\varphi_{e_{r,pq}^{\ominus}}}{2}-\frac{\varphi_{e_{r,pq}^{\ominus}}}{2}\Bigg)\\
		={}&\frac{1}{16}  \sum_{r=p,q}\left(\sum_{f\in F_{r,pq}^{\oplus}}\left((\varphi_{e_{r,pq}^{\oplus}}-\varphi_{f})+3\left(\varphi_{f}-\varphi_{r}\right)\right)+\sum_{f\in F_{r,pq}^{\ominus}}\left(3\left(\varphi_{r}-\varphi_{f}\right)+(\varphi_{f}-\varphi_{e_{r,pq}^{\ominus}})\right)\right).
	\end{align*}
	We simplify this expression by gathering the terms involving $F_{r,pq}^\oplus$ and $F_{r,pq}^\ominus$ under a sum $\sum_{\genexp\in\{\oplus,\ominus\}}$: setting $\delta_\oplus=+1$ and $\delta_\ominus=-1$, we have
	\[
	\varphi_{pq}^{\oplus}-\varphi_{pq}^{\ominus}=\frac{1}{16}  \sum_{r=p,q}\sum_{\genexp\in\{\oplus,\ominus\}}\sum_{f\in F_{r,pq}^{\genexp}}\delta_\genexp\left((\varphi_{e_{r,pq}^{\genexp}}-\varphi_{f})+3\left(\varphi_{f}-\varphi_{r}\right)\right).
	\]
	Using the definition \eqref{def:eps.zeta}, we arrive at
	\begin{equation}\label{est.phi}
		\varphi_{pq}^{\oplus}-\varphi_{pq}^{\ominus}
		=\frac{1}{16}\sum_{r=p,q}\sum_{\genexp\in\{\oplus,\ominus\}}\sum_{f\in F_{r,pq}^{\genexp}}\delta_\genexp\left(\zeta_{X,pq}^\genexp(\varphi_{e_{r,pq}^{\genexp}}-\varphi_{f})+\zeta_{Y,pq}^\genexp(\varphi_{f}-\varphi_{r})\right).
	\end{equation}
	
	\medskip
	
	\textbf{Case 2: Vertex on $\Gamma$}. One of $\vec{x}_{pq}^\oplus$ or $\vec{x}_{pq}^\ominus$ lies on $\Gamma$. Without loss of generality we assume it is $\vec{x}_{pq}^\ominus$.
	The boundary value $\varphi_{pq}^\ominus$ is expressed as $1/4$ of the sum of the unknowns in four faces lying on $\Gamma$ or, equivalently, $1/8$ of the sum of these four values associated with coefficients $2$. Plugging this expression into $\varphi_{pq}^\oplus-\varphi_{pq}^\ominus$ and reasoning as in Case 1, but using this time the splitting of coefficients represented in Figure \ref{fig:coefs-Gamma}, we arrive at
	\begin{equation}\label{diff.phi}
		\varphi_{pq}^{\oplus}-\varphi_{pq}^{\ominus}=\frac{1}{16}  \sum_{r=p,q}\sum_{f\in F_{r,pq}^{\oplus}}\left((\varphi_{e_{r,pq}^{\oplus}}-\varphi_{f})+3\left(\varphi_{f}-\varphi_{r}\right)\right)
		+\frac{1}{16}\sum_{r=p,q}\sum_{f\in F_{r,pq}^{\ominus}}4(\varphi_r-\varphi_f)
	\end{equation}
	(the sum over $f\in F_{r,pq}^{\ominus}$ actually only contains one term, but is written this way for homogeneity of notations). This sum can be written in the form \eqref{est.phi}, owing to the definition \eqref{def:eps.zeta} of $(\zeta_{X,pq}^\genexp,\zeta_{Y,pq}^\genexp)$.
	
	
	\begin{figure}[h!]
		\centering
		\begin{tabular}{c@{\qquad}c}
			\input{\figsFolfer/fig-coefs-Gamma.pdf_t}&\input{\figsFolfer/fig-coefs-Gamma2.pdf_t}
		\end{tabular}
		\caption{$\vec{x}_{pq}^\ominus$ on $\Gamma$. Left: coefficients associated to each cell unknown in the expression of $\varphi_{pq}^{\oplus}-\varphi_{pq}^{\ominus}$ (projection on 2D). Right: splitting of these coefficients, and associations of neighbouring control volumes.  The sums of coefficients in each cell are the same in both pictures.}\label{fig:coefs-Gamma}
	\end{figure}
	
	\medskip
	
	\textbf{Case 3: Vertex on the Dirichlet boundary}. Assuming $\vec{x}_{pq}^\ominus\in\partial\Omega\backslash\Gamma$, we have $\varphi_{pq}^\ominus=0$.
	Then $F_{r,pq}^\ominus$ is made of the unique face $f^\ominus=\sigma$ (degenerate cell) of $r$ on $\partial\Omega\backslash\Gamma$, associated with a value $\varphi_{f^\ominus}=0$. The splitting of coefficients described in Figure \ref{fig:coefs-dir} leads to \eqref{diff.phi} with the last coefficient 4 repla\-ced by 8; thus, recalling the definition of $(\zeta_{X,pq}^\genexp,\zeta_{Y,pq}^\genexp)$ in \eqref{def:eps.zeta}, we can again write $ \varphi_{pq}^{\oplus}-\varphi_{pq}^{\ominus}$ in the form \eqref{est.phi}.
	
	\begin{figure}[h!]
		\centering
		\begin{tabular}{c@{\qquad}c}
			\input{\figsFolfer/fig-coefs-dir.pdf_t}&\input{\figsFolfer/fig-coefs-dir2.pdf_t}
		\end{tabular}
		\caption{$\vec{x}_{pq}^\ominus$ on $\partial\Omega\backslash\Gamma$. Left: coefficients associated to each cell unknown in the expression of $\varphi_{pq}^{\oplus}-\varphi_{pq}^{\ominus}=\varphi_{pq}^{\oplus}$ (projection on 2D). Right: splitting of these coefficients, and associations of neighbouring control volumes.  The sums of coefficients in each cell are the same in both pictures.
		}\label{fig:coefs-dir}
	\end{figure}
	
	\medskip
	
	\textbf{Conclusion.} We established that the formula \eqref{est.phi} always holds, no matter the positions of the vertices. Accounting for the definitions of $F_{r,pq}^\genexp$ and of $(\zeta_{X,pq}^\genexp,\zeta_{Y,pq}^\genexp)$ (see \eqref{def:eps.zeta}), we see that the right-hand side of this relation is made of at most 32 sums of $\delta_\genexp(\varphi_a-\varphi_b)$ (with $(a,b)=(e_{r,pq}^\genexp,f)$ or $(a,b)=(f,r)$). Hence, using the convexity of the square function, we obtain
\[
		\left(\frac{\varphi_{pq}^{\oplus}-\varphi_{pq}^{\ominus}}{2}\right)^2
		\le\frac{1}{32}\sum_{r=p,q}\sum_{\genexp\in\{\oplus,\ominus\}}\sum_{f\in F_{r,pq}^{\genexp}}\left(\zeta_{X,pq}^\genexp(\varphi_{e_{r,pq}^{\genexp}}-\varphi_{f})^2+\zeta_{Y,pq}^\genexp(\varphi_{f}-\varphi_{r})^2\right).
\]

A similar estimate can be obtained for $(\varphi_{pq}^{\boxplus}-\varphi_{pq}^{\boxminus})^2$, by replacing the sum range $\genexp\in\{\oplus,\ominus\}$ with $\genexp\in\{\boxplus,\boxminus\}$. By plugging these bounds into \eqref{eq:coer.1} we obtain
	\begin{align}
		{}&\sum_{\sigma\in \F\backslash\FG}\flux{p,\sigma}(\varphi)\left(\varphi_{p}-\varphi_{q}\right)\nonumber\\
		&\geq \sum_{\sigma_{pq}\in \F\backslash\FG}|\sigma_{pq}|\left[\frac{1}{\beta_{pq}d_{pq}}-\epsilon_{pq}\frac{\alpha_{pq}^{\ocircle}}{2\beta_{pq}d_{pq}^{\ocircle}}-\epsilon_{pq}\frac{\alpha_{pq}^{\square}}{2\beta_{pq}d_{pq}^{\square}}\right](\varphi_{p}-\varphi_{q})^{2}\nonumber\\
		& \quad-\frac{1}{16}\sum_{\sigma_{pq}\in \F\backslash(\FG\cup\FD)}\sum_{r=p,q}\sum_{\genexp\in\{\oplus,\ominus,\boxplus,\boxminus\}}\frac{|\sigma_{pq}|\alpha_{pq}^{\diamondsuit}}{\beta_{pq}d_{pq}^{\diamondsuit}}\sum_{f\in F_{r,pq}^{\genexp}}\left(\zeta_{X,pq}^\genexp(\varphi_{e_{r,pq}^{\genexp}}-\varphi_{f})^{2}+\zeta_{Y,pq}^{\genexp}(\varphi_{f}-\varphi_{r})^{2}\right),
		\label{eq:coer.2}
	\end{align}
	where $\diamondsuit$ is given by \eqref{def:diamondsuit}, and we have used the definition \eqref{def:eps.zeta} of $\epsilon_{pq}$ to integrate the last term in \eqref{eq:coer.1} into the first one in the right-hand side above.
	
	All the differences of $\varphi$ in this equation are differences $(\varphi_a-\varphi_b)^2$ of values across a face $\sigma_{ab}\in \F\backslash\FG$. For such a given face, the sets $X_{ab}$ and $Y_{ab}$ defined by $\eqref{def:XYab}$ precisely identify the indices in the second addend of \eqref{eq:coer.2} that involve the term $(\varphi_a-\varphi_b)^2$. Hence, \eqref{eq:coer.2} can be re-arranged as
	\begin{align*}
		\sum_{\sigma\in \F\backslash\FG}\flux{p,\sigma}(\varphi)\left(\varphi_{p}-\varphi_{q}\right){}&
		\geq \sum_{\sigma_{ab}\in \F\backslash\FG}\frac{|\sigma_{ab}|}{d_{ab}}\Bigg\{\left[\frac{1}{\beta_{ab}}-\epsilon_{ab}\frac{\alpha_{ab}^{\ocircle}d_{ab}}{2\beta_{ab}d_{ab}^{\ocircle}}-\epsilon_{ab}\frac{\alpha_{ab}^{\square}d_{ab}}{2\beta_{ab}d_{ab}^{\square}}\right]\\
		& -\frac{1}{16}\sum_{(p,q,\genexp)\in X_{ab}}\zeta_{X,pq}^\genexp\frac{|\sigma_{pq}|d_{ab}\alpha_{pq}^{\diamondsuit}}{|\sigma_{ab}|d_{pq}^{\diamondsuit}\beta_{pq}}-\frac{1}{16}\sum_{(p,q,\genexp)\in Y_{ab}}\zeta_{Y,pq}^\genexp\frac{|\sigma_{pq}|d_{ab}\alpha_{pq}^{\diamondsuit}}{|\sigma_{ab}|d_{pq}^{\diamondsuit}\beta_{pq}}\Bigg\}(\varphi_{a}-\varphi_{b})^{2}.
	\end{align*}
	The definition \eqref{def:varrho.h} then shows that \eqref{eq:coer.flux.Omega} holds with $\varrho_{\Omega}=\varrho_{\mesh,\Omega}$.

	\begin{remark}[Alternative coercivity factor]\label{rem:alt.varrho}
		For each $\sigma_{pq}\in\F\backslash\FG$, take $\varpi_{pq}>0$. Using before \eqref{eq:coer.1} the generalised Young inequality $xy\ge -\frac{\varpi_{pq}^{-1}}{2}x^2-\frac{\varpi_{pq}}{2}y^2$, instead of the standard one with $\varpi_{pq}=1$, the reasoning above shows that the regularity factor $\varrho_{\mesh,\Omega}$ can be re-defined such that
		\begin{equation}\label{def:reg.int.3}
			\begin{aligned}
				\varrho_{\mesh,\Omega}:=\max\Bigg\{{}&\left[\frac{1}{\beta_{ab}}-\epsilon_{ab}\frac{\varpi_{ab}^{-1}\alpha_{ab}^{\ocircle}d_{ab}}{2\beta_{ab}d_{ab}^{\ocircle}}-\epsilon_{ab}\frac{\varpi_{ab}^{-1}\alpha_{ab}^{\square}d_{ab}}{2\beta_{ab}d_{ab}^{\square}}\right]-\frac{1}{16}\sum_{(p,q,\genexp)\in X_{ab}}\zeta_{X,pq}^\genexp\frac{\varpi_{pq}|\sigma_{pq}|d_{ab}\alpha_{pq}^{\diamondsuit}}{|\sigma_{ab}|d_{pq}^{\diamondsuit}\beta_{pq}}\\
				& \quad-\frac{1}{16}\sum_{(p,q,\genexp)\in Y_{ab}}\zeta_{Y,pq}^\genexp\frac{\varpi_{pq}|\sigma_{pq}|d_{ab}\alpha_{pq}^{\diamondsuit}}{|\sigma_{ab}|d_{pq}^{\diamondsuit}\beta_{pq}}\,:\,\sigma_{ab}\in\F\backslash\FG\Bigg\}.
			\end{aligned}
		\end{equation}
		For certain choices of $\varpi_{ab}$, this alternative coercivity factor could remain positive and bounded above for certain meshes, for which the factor satisfying \eqref{def:varrho.h} is negative.
	\end{remark}

	\subsubsection{Consistency}\label{sec:cons.in.flux}
	
	We start with a preliminary estimate. Let $\vec{a}=\big(\frac{1}{\beta_{pq}},-\frac{\alpha_{pq}^{\ocircle}}{\beta_{pq}},-\frac{\alpha_{pq}^{\square}}{\beta_{pq}}\big)$. The relation \eqref{npq:rep} yields
	\[
	\vec{a}  =\Big[\vec{s}_{pq},\vec{t}_{pq}^{\ocircle},\vec{t}_{pq}^{\square}\Big]^{-1}\frac{\widetilde{\vec{n}}_{pq}}{|\sigma_{pq}|}.
	\]
	Since $\frac{|\widetilde{\vec{n}}_{pq}|}{|\sigma_{pq}|}\le 1$ (see \eqref{npq:ave}) we infer
	$|\vec{a}|\le \norm{\big[\vec{s}_{pq},\vec{t}_{pq}^{\ocircle},\vec{t}_{pq}^{\square}\big]^{-1}}$.
	The vectors $\vec{s}_{pq},\vec{t}_{pq}^{\ocircle},\vec{t}_{pq}^{\square}$ having unit length, the representation of the inverse of $\big[\vec{s}_{pq},\vec{t}_{pq}^{\ocircle},\vec{t}_{pq}^{\square}\big]$ using the co-matrix and the determinant give a universal constant $C$ such that
	\begin{equation}
		\left|\left(\frac{1}{\beta_{pq}},-\frac{\alpha_{pq}^{\ocircle}}{\beta_{pq}},-\frac{\alpha_{pq}^{\square}}{\beta_{pq}}\right)\right|\leq\frac{C}{\left|\det(\vec{s}_{pq},\vec{t}_{pq}^{\ocircle},\vec{t}_{pq}^{\square})\right|}.\label{eq:coeff_boundness}
	\end{equation}
	
	Let $u\in C^2(\overline{\Omega})$ with $u=0$ on $\partial\Omega\backslash \Gamma$. In the following, we write $\mathcal O(s)$ for generic functions that satisfy $|\mathcal O(s)|\le C\norm[C^2(\overline{\Omega})]{u}|s|$ with $C$ depending only on an upper bound of the regularity factors ${\rm reg}_\mesh$ and ${\rm reg}_{\mesh,\Omega}$ defined by \eqref{def:regh} and \eqref{regfac:cons}. This notation naturally extends to the case where $s$ is a vector.
	
	Taking an arbitrary point $\vec{x}_\sigma\in\sigma_{pq}$, \eqref{npq:ave} and \eqref{npq:rep} show that
	\begin{align}
		\overline{F}_{p,\sigma}(u) ={}&\iint_{\sigma_{pq}}\nabla u\cdot\vec{n}_{pq}
		=\nabla u(\vec{x}_{\sigma})\cdot\iint_{\sigma_{pq}}\vec{n}_{pq}+|\sigma_{pq}|\mathcal O(h)\nonumber\\
		={}&|\sigma_{pq}|\left(\frac{1}{\beta_{pq}}\nabla u(\vec{x}_{\sigma})\cdot\vec{s}_{pq}-\frac{\alpha^{\ocircle}_{pq}}{\beta_{pq}}\nabla u(\vec{x}_{\sigma})\cdot\vec{t}_{pq}^{\ocircle}-\frac{\alpha_{pq}^{\boxempty}}{\beta_{pq}}\nabla u(\vec{x}_{\sigma})\cdot\vec{t}_{pq}^{\boxempty}\right)+|\sigma_{pq}|\mathcal O(h).
		\label{eq:real_flux_approx}
	\end{align}
	Let us look at each directional derivative separately. Since $d_{pq}\le 2h$, the definition \eqref{def:spq} of $\vec{s}_{pq}$ and a Taylor expansion yield
	\begin{equation}
		\nabla u(\vec{x}_{\sigma})\cdot\vec{s}_{pq}=
		\frac{u(\vec{x}_{p})-u(\vec{x}_{q})}{d_{pq}}+\mathcal O(h).
		\label{eq:s_der_approx}
	\end{equation}
	For the derivative in the tangential direction $\vec{t}_{pq}^\ocircle$, Lemma \ref{lem:bary} below shows that
	\begin{align*}
		\nabla u(\vec{x}_{\sigma})\cdot\vec{t}_{pq}^{\ocircle} ={}&\frac{ u\left(\vec{x}_{pq}^{\oplus}\right)-u\left(\vec{x}_{pq}^{\ominus}\right)}{d_{pq}^{\ocircle}}+\mathcal O(h)\\
		={}&\frac{\frac{1}{\Card{\mathcal R(\vec{x}_{pq}^{\oplus})}}\sum_{\vec{y}\in \mathcal R(\vec{x}_{pq}^{\oplus})}u(\vec{y})+\mathcal O(\vec{d}_{pq}^{\oplus 2})-\frac{1}{\Card{\mathcal R(\vec{x}_{pq}^{\ominus})}}\sum_{\vec{y}\in \mathcal R(\vec{x}_{pq}^{\ominus})}u(\vec{y})+\mathcal O(\vec{d}_{pq}^{\ominus 2})}{d_{pq}^{\ocircle}}+\mathcal O(h),
	\end{align*}
	with $\vec{d}_{pq}^{\genexp 2}$ the vector obtained by component-wise squaring $\vec{d}_{pq}^\genexp$.
	Using the definition of ${\rm reg}_{\mesh,\Omega}$ we infer
	\begin{equation}
		\nabla u(\vec{x}_{\sigma})\cdot\vec{t}_{pq}^{\ocircle} =\frac{\frac{1}{\Card{\mathcal R(\vec{x}_{pq}^{\oplus})}}\sum_{\vec{y}\in \mathcal R(\vec{x}_{pq}^{\oplus})}u(\vec{y})-\frac{1}{\Card{\mathcal R(\vec{x}_{pq}^{\ominus})}}\sum_{\vec{y}\in \mathcal R(\vec{x}_{pq}^{\ominus})}u(\vec{y})}{d_{pq}^{\ocircle}}+\mathcal O(h).\label{eq:t_der_approx}
	\end{equation}
	Similarly,
	\begin{equation}
		\nabla u(\vec{x}_{\sigma})\cdot\vec{t}_{pq}^{\boxempty} =\frac{\frac{1}{\Card{\mathcal R(\vec{x}_{pq}^{\boxplus})}}\sum_{\vec{y}\in \mathcal R(\vec{x}_{pq}^{\boxplus})}u(\vec{y})-\frac{1}{\Card{\mathcal R(\vec{x}_{pq}^{\boxminus})}}\sum_{\vec{y}\in \mathcal R(\vec{x}_{pq}^{\boxminus})}u(\vec{y})}{d_{pq}^{\boxempty}}+\mathcal O(h).\label{eq:f_der_approx}
	\end{equation}
	Plug \eqref{eq:s_der_approx}--\eqref{eq:f_der_approx} into \eqref{eq:real_flux_approx} and subtract $\flux{p,\sigma}(I_hu)$ defined by \eqref{eq:flux_approx} with $\varphi=I_hu$, so that $\varphi_p=u(\vec{x}_p)$ for all $p\in\mesh$ and $\varphi_{\vec{y}}=u(\vec{x}_q)=0$ if $q=\sigma\in\FD$. This gives
	\begin{align}
		\overline{F}_{p,\sigma}(u)-\flux{p,\sigma}(I_{h}u) & =|\sigma_{pq}|\left(\frac{\mathcal O(h)}{\beta_{pq}}-\frac{\alpha_{pq}^\ocircle\mathcal O(h)}{\beta_{pq}}-\frac{\alpha_{pq}^\boxempty\mathcal O(h)}{\beta_{pq}}\right)+|\sigma_{pq}|\mathcal O(h).
	\end{align}
	Estimate \eqref{eq:coeff_boundness} and the definition \eqref{regfac:cons} of ${\rm reg}_{\mesh,\Omega}$ then conclude the proof of \eqref{eq:cons.flux.Omega}, with a constant that only depends on an upper bound of this regularity factor.

	\begin{lemma}[Error for smooth functions on barycentric combinations]\label{lem:bary}
		Let $U$ be a convex open set of $\R^3$, $(\vec{z}_i)_{i=1,\ldots,I}$ be points in $U$, and let $\vec{z}=\sum_{i=1}^I \lambda_i \vec{z}_i$ for some convex coefficients $(\lambda_i)_{i=1,\ldots,I}$. If $\psi\in C^2(\overline{U})$ then
		\begin{equation}
			\left|\psi(\vec{z})-\sum_{i=1}^I \lambda_i\psi(\vec{z}_i)\right|\le \frac12 \norm[C^2(\overline{U})]{\psi}\max_{i=1,\ldots,I}|\vec{z}_i-\vec{z}|^2.
			\label{eq:conv.approx}
		\end{equation}
	\end{lemma}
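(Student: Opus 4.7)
The plan is to obtain the bound by performing a second-order Taylor expansion of $\psi$ around the convex combination point $\vec{z}$ and then averaging these expansions against the convex weights $(\lambda_i)$. Because $U$ is convex, the entire segment from $\vec{z}$ to each $\vec{z}_i$ lies in $U$, so the Taylor remainder can be controlled uniformly by $\norm[C^2(\overline{U})]{\psi}$.

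First I would write, for each $i$, the Taylor expansion
\[
\psi(\vec{z}_i)=\psi(\vec{z})+\nabla\psi(\vec{z})\cdot(\vec{z}_i-\vec{z})+R_i,
\]
where the remainder $R_i$, expressed in integral form along the segment $[\vec{z},\vec{z}_i]\subset U$, satisfies $|R_i|\le \tfrac12\norm[C^2(\overline{U})]{\psi}\,|\vec{z}_i-\vec{z}|^2$. Multiplying by $\lambda_i$, summing over $i$, and using the two defining barycentric identities $\sum_i\lambda_i=1$ and $\sum_i\lambda_i\vec{z}_i=\vec{z}$, the zeroth- and first-order contributions collapse to
\[
\sum_{i=1}^I\lambda_i\psi(\vec{z}_i)=\psi(\vec{z})+\sum_{i=1}^I\lambda_i R_i.
\]

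Finally, using $\lambda_i\ge 0$ and $\sum_i\lambda_i=1$, I would estimate
\[
\left|\sum_{i=1}^I\lambda_i R_i\right|\le \sum_{i=1}^I\lambda_i\,\tfrac12\norm[C^2(\overline{U})]{\psi}\,|\vec{z}_i-\vec{z}|^2\le \tfrac12\norm[C^2(\overline{U})]{\psi}\max_{i}|\vec{z}_i-\vec{z}|^2,
\]
which gives \eqref{eq:conv.approx}. There is no real obstacle here: the only point that deserves care is that the convexity of $U$ is used precisely to ensure the validity of the Taylor formula with remainder controlled by the $C^2$-norm over $\overline{U}$, and that the cancellation of the first-order term is exactly the barycentric property of $\vec{z}$.
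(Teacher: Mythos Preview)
Your proof is correct and essentially identical to the paper's own argument: a second-order Taylor expansion of $\psi$ about $\vec{z}$, applied at each $\vec{z}_i$, multiplied by $\lambda_i$ and summed so that the first-order term cancels by the barycentric identity, with the remainder bounded by $\tfrac12\norm[C^2(\overline{U})]{\psi}\max_i|\vec{z}_i-\vec{z}|^2$. Your explicit remark on the role of convexity of $U$ is a nice clarification that the paper leaves implicit.
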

	
	\begin{proof}
		This lemma is classical but its (short) proof is recalled for the sake of legibility. A Taylor expansion around $\vec{z}$ gives 
		\begin{equation}\label{psi.conv}
			\psi(\vec{x})=\psi(\vec{z})+\nabla \psi(\vec{z})\cdot(\vec{x}-\vec{z})+{\rm Rem}(\vec{x},\vec{z}),
		\end{equation}
		where $|{\rm Rem}(\vec{x},\vec{z})|\le \frac12  \norm[C^2(\overline{U})]{\psi}|\vec{x}-\vec{z}|^2$. Apply \eqref{psi.conv} to $\vec{x}=\vec{z_i}$, multiply by $\lambda_i$ and sum over $i=1,\ldots,I$. Since $\sum_{i=1}^I\lambda_i(\vec{z}_i-\vec{z})=0$ the term involving $\nabla\psi(\vec{z})$ disappears and \eqref{eq:conv.approx} follows. \end{proof}
	
\medskip

\thanks{\textbf{Acknowledgement}: this research was supported by the Australian Government through the Australian Research Council's Discovery Projects funding scheme (pro\-ject number DP170100605), and by the Slovak Research and Development Agency (grant APVV-0522-15).
}

\begin{footnotesize}
  \bibliographystyle{abbrv}
  \bibliography{oblique-fv}
\end{footnotesize}

\end{document}